\def\zz{\mathbb{Z}} 
\def\rr{\mathbb{R}}
\def\nn{\mathbb{N}}
\def\cc{\mathbb{C}}
\newtheorem{theorem}{Theorem}[section]
\newtheorem{proposition}[theorem]{Proposition}
\newtheorem{remark}{\it Remark\/}
\title[Quadratic classical FEM approximation of waves: propagation, observation and control]{On the quadratic finite element approximation of 1-D waves: propagation, observation and control}
\author{Aurora Marica}
\address{Aurora Marica$^{a}$, Enrique Zuazua$^{a,b}$\medskip\hfill\break\indent$^{a}$BCAM - Basque Center for Applied Mathematics,
\hfill\break\indent Bizkaia Technology Park 500, 48160, Derio, Basque Country, Spain \medskip\hfill\break\indent
$^{b}$Ikerbasque - Basque Foundation for Science,
\hfill\break\indent Alameda Urquijo 36-5, Plaza Bizkaia, 48011, Bilbao, Basque Country, Spain\medskip}
\email{marica@bcamath.org, zuazua@bcamath.org}
\urladdr{www.bcamath.org/marica/, www.bcamath.org/zuazua/}
\author{Enrique Zuazua}
\thanks{Both authors were partially supported by the Grant MTM2008-03541 of the MICINN,
Spain, Project PI2010-04 of the Basque Government, the ERC Advanced Grant FP7-246775 NUMERIWAVES and the
ESF Research Networking Programme OPTPDE}
\begin{document}
\maketitle

\begin{abstract} We study the propagation, observation and control properties of the $1-d$ wave equation on a bounded interval discretized in space using the
quadratic $P_2$  - classical finite element approximation. A careful Fourier analysis of the discrete wave dynamics reveals two different branches in the spectrum:
the \textit{acoustic} one, of physical nature,
and the \textit{optic} one, related to the perturbations that this second-order finite element approximation introduces with respect to the $P_1$ one.
On both modes there are high frequencies with vanishing group velocity as the mesh size tends to zero. This shows that the classical property of continuous waves of being observable from the boundary fails to be uniform for this discretization scheme. As a consequence of this, the controls of the discrete waves may blow-up as the mesh size tends to zero. To remedy these high frequency pathologies, we design filtering mechanisms based on a bi-grid algorithm for which one can recover the uniformity of the observability constant
in a finite time and, consequently, the possibility to control with uniformly bounded $L^2$ - controls appropriate projections of the solutions. This also allows showing that, by relaxing the control requirement, the controls are uniformly bounded and converge to the continuous ones as the mesh size tends to zero.
\end{abstract}
\section{Introduction, problem formulation and main results}\label{SectIntro}
Consider the $1-d$ wave equation with non-homogeneous boundary conditions:
\begin{equation}\left\{\begin{array}{l}y_{tt}(x,t)-y_{xx}(x,t)=0,\ x\in(0,1),\ t>0,\\y(0,t)=0,\ y(1,t)=v(t),\ t>0,\\
y(x,0)=y^0(x),\ y_t(x,0)=y^1(x),\ x\in(0,1).\end{array}\right.\label{contwavecontrolled}\end{equation}
System (\ref{contwavecontrolled}) is said to be \textit{exactly controllable} in time $T\geq 2$ if, for all $(y^0,y^1)\in L^2\times H^{-1}(0,1)$, there exists a control function
$v\in L^2(0,T)$ such that the solution of (\ref{contwavecontrolled}) can be driven at rest at time $T$, i.e.
\begin{equation}y(x,T)=y_t(x,T)=0.\label{ContNullControl}\end{equation}

We also introduce the adjoint $1-d$ wave equation with homogeneous boundary conditions:
\begin{equation}\left\{\begin{array}{l}u_{tt}(x,t)-u_{xx}(x,t)=0,\ x\in(0,1),\ t>0,\\u(0,t)=u(1,t)=0,\ t>0,\\
u(x,T)=u^0(x),\ u_t(x,T)=u^1(x),\ x\in(0,1).\end{array}\right.\label{contwaveadjoint}\end{equation}

This system is well known to be well posed in the energy space $\mathcal{V}:=H_0^1\times L^2(0,1)$ and the energy below is conserved in time:
$$\mathcal{E}(u^0,u^1):=\frac{1}{2}(||u(\cdot,t)||_{H_0^1(0,1)}^2+||u_t(\cdot,t)||_{L^2(0,1)}^2).$$

The Hilbert Uniqueness Method (HUM) introduced in \cite{Lio} allows showing that the property of \textit{exact controllability} for (\ref{contwavecontrolled}) is equivalent to the \textit{boundary observability} of (\ref{contwaveadjoint}). The \textit{observability property} of the wave equation ensures that the following \textit{observability inequality} holds for all solutions of (\ref{contwaveadjoint}) provided $T\geq 2$:
\begin{equation}\mathcal{E}(u^0,u^1)\leq C(T)\int\limits_0^T |u_x(1,t)|^2\,dt.\label{contobsineq}\end{equation}

The best constant $C(T)$ in (\ref{contobsineq}) is the so-called \textit{observability constant}. The observability time $T$ has to be larger than the characteristic time $T^{\star}:=2$ which is needed by any solution associated to initial data $(u^0,u^1)$ supported in a very narrow neighborhood of $x=1$ to travel along the characteristic rays $x(t)=x-t$, get to the boundary
$x=0$ and come back to the boundary $x=1$ along the characteristics $x(t)=x+t$.

It is also well known that for all $T>0$ and all solutions $u$ of the adjoint problem (\ref{contwaveadjoint}) with initial data $(u^0,u^1)\in\mathcal{V}$, the following
\textit{admissibility inequality} holds:
\begin{equation}c(T)\int\limits_0^T|u_x(1,t)|^2\,dt\leq\mathcal{E}(u^0,u^1),\label{contDirectInequality}\end{equation}
so that for all $T\geq 2$, $||\partial_xu(1,\cdot)||_{L^2(0,T)}$ and $||(u^0,u^1)||_{\mathcal{V}}$ are equivalent norms.

As a consequence of these results, it is easy to see that for all $(y^0,y^1)\in L^2\times H^{-1}(0,1)$, there exists a control $v\in L^2(0,T)$ driving the solution of (\ref{contwavecontrolled}) to the rest at $t=T$, i.e. such that (\ref{ContNullControl}) holds.  This turns out to be equivalent to the fact that \begin{equation}\int\limits_0^Tv(t)u_x(1,t)\,dt=\langle (y^1,-y^0),(u(\cdot,0),u_t(\cdot,0))\rangle_{\mathcal{V}',\mathcal{V}},
\label{identitycontcontrol}\end{equation}
for all solutions $u$ of (\ref{contwaveadjoint}), where $\langle\cdot,\cdot\rangle_{\mathcal{V}',\mathcal{V}}$ is the
duality product between $\mathcal{V}'=H^{-1}\times L^2(0,1)$ and $\mathcal{V}$.

The HUM control $v$, the one of minimal $L^2(0,T)$-norm, has the explicit
form
\begin{equation}v(t)=\tilde{v}(t):=\tilde{u}_x(1,t),\label{contHUMcontrol}\end{equation}
where $\tilde{u}(x,t)$ is the solution corresponding to the minimum $(\tilde{u}^0,\tilde{u}^1)\in\mathcal{V}$ of the quadratic functional
\begin{equation}\mathcal{J}(u^0,u^1):=\frac{1}{2}\int\limits_0^T|u_x(1,t)|^2\,dt-
\langle (y^1,-y^0),(u(\cdot,0),u_t(\cdot,0))\rangle_{\mathcal{V}',\mathcal{V}}.\label{contfunctional}\end{equation}

The effect of substituting the continuous controlled wave equation (\ref{contwavecontrolled}) or
the corresponding adjoint problem (\ref{contwaveadjoint}) by discrete models has been intensively studied during the last years, starting from some
simple numerical schemes on uniform meshes like finite differences or linear $P_1$ - finite element methods in \cite{InfZua} and, more recently, more
complex schemes like the discontinuous Galerkin ones in \cite{MarZuaDG}. In all these cases, the convergence of the approximation scheme
in the classical sense of the numerical analysis does not suffice to guarantee that
the sequence of discrete controls converges to the continuous ones, as one could expect. This is due to the fact that there are classes of initial data for the
discrete adjoint problem generating high frequency wave packets propagating at a very low group velocity and that, consequently, are impossible to be observed from the boundary of the domain during a finite time, uniformly as the mesh-size parameter tends to zero. This leads to the divergence of the discrete observability constant as
the mesh size tends to zero. High frequency pathological phenomena have also been observed for numerical approximation schemes of other models, like the linear Schr\"{o}dinger
equation (cf. \cite{IgZuaSch}), in which one is interested in the uniformity of the so-called \textit{dispersive estimates}, which play an important role in the study of the well-posedness of the non-linear models.

Several \textit{filtering techniques} have been designed to face these high frequency pathologies: the \textit{Fourier truncation method} (cf. \cite{InfZua}), which simply eliminates all the high frequency Fourier components propagating non-uniformly;
the \textit{bi-grid algorithm} introduced in \cite{GloWell} and \cite{GloLiLio} and rigorously studied in \cite{IgZuaWave},
\cite{LorMeh} or \cite{NegZuaCR} in the context of the finite differences semi-discretization of the $1-d$ and $2-d$ wave equation and of the Schr\"{o}dinger equation
(cf.\cite{IgZuaSch}), which consists in taking initial data with
slow oscillations obtained by linear interpolation from data given on a coarser grid; and the \textit{numerical viscosity} method, which by adding a suitable dissipative mechanism damps out the spurious high frequencies (\cite{MicuVisc}, \cite{TebZuaBoundaryDamp}). We should emphasize that the \textit{mixed finite element method} analyzed in \cite{CasMicu} is, as far as we know, the unique method that preserves the propagation and controllability properties of
the continuous model uniformly in the mesh size without requiring any extra filtering. The interested reader is referred to the survey articles \cite{ErvZuaSurv} and \cite{ZuaPOC} for a  presentation of the development and the state of the art in this topic.

The purpose of the present paper is to analyze the behavior of the quadratic $P_2$ - finite element
semi-discretization of the problems (\ref{contwavecontrolled}) and (\ref{contwaveadjoint}) from the
 uniform controllability/observability point of view. In \textbf{Section \ref{SectIntroP2}} we introduce in a rigorous way the discrete analogue
of (\ref{contwavecontrolled}) and (\ref{contwaveadjoint}) and explain the minimization process generating the discrete controls. In \textbf{Section
\ref{SectFour}} we analyze the spectral problem associated to this discrete model and reveal the co-existence of two main types of Fourier modes:
an \textit{acoustic} one, of physical nature, related to the nodal components of the numerical solution, and an \textit{optic} one, of spurious nature,
related to the curvature with which the quadratic finite element method perturbs the linear approximation. We also study finer properties of the spectrum, for
example the \textit{spectral gap}, identifying three regions of null gap: the highest frequencies on both acoustic and optic modes and the lowest
frequencies on the optic one. The content of this section is related to previously existing work. For instance, the dispersive properties of higher-order finite element methods have been analyzed in \cite{Ains}
in the setting of the Helmholtz equation. An explicit form of the acoustic dispersion relation was obtained for approximations of arbitrary order.
It was shown that the numerical dispersion displays three different types of behavior,
depending on the approximation order relative to the mesh-size and the wave number. In \textbf{Section \ref{SectBoundObsEig}} we obtain some spectral identities allowing us to analyze the discrete observability inequality
for the adjoint system. In \textbf{Section \ref{SectIngham}} we show that the \textit{Fourier truncation} of the three pathological regions of the spectrum leads to
an uniform observability inequality. In \textbf{Section \ref{SectBigrid}} we prove that a filtering mechanism consisting in, firstly, considering piecewise linear initial data and, secondly,
preconditionning the nodal components by a bi-grid algorithm guarantees uniform observability properties. Within the proof, we use a classical \textit{dyadic decomposition
argument} (cf. \cite{IgZuaWave}), which mainly relies on the fact that for this class of initial data the total energy can be bounded by the energy of the projection
on the low frequency components of the acoustic dispersion relation. We should emphasize that our results are finer than the ones in \cite{ErvSpectralWave} or \cite{ErvSpectralSch}, where one obtains uniform observability properties for finite element approximations of any order, but by filtering the Fourier modes much under the critical scale $1/h$. Here we only consider the particular case of quadratic finite element approximation on $1-d$ meshes, but we get to the critical filtering scale $1/h$. Note however that the results in \cite{ErvSpectralWave} and \cite{ErvSpectralSch} apply in the context of non-uniform grids as well. In \textbf{Section \ref{SectConvergence}} we present the main steps of the proof of the convergence of the discrete control problem under the assumption that the initial data in the corresponding adjoint problem are filtered through a Fourier truncation or a bi-grid algorithm. \textbf{Section \ref{SectOpenPbms}} is devoted to present the conclusions of the paper and some related open problems.

\section{The $P_2$ - finite element approximation of $1-d$ waves}\label{SectIntroP2} Let $N\in\nn$, $h=1/(N+1)$ and $0=x_0<x_j<x_{N+1}=1$ be the \textit{nodes} of an \textit{uniform grid} of the interval
$[0,1]$, with $x_j=jh$, $0\leq j\leq N+1$, constituted by the subintervals $I_j=(x_j,x_{j+1})$, with $0\leq j\leq N$. We also define the
\textit{midpoints} $x_{j+1/2}=(j+1/2)h$ of this grid, with $0\leq j\leq N$. Let us introduce the space $\mathcal{P}_p(a,b)$ of polynomials of order $p$
on the interval $(a,b)$ and the \textit{space of piecewise quadratic and continuous functions}
$\mathcal{U}_h:=\{u\in H_0^1(0,1)\mbox{ s.t. }u|_{I_j}\in\mathcal{P}_2(I_j),\ 0\leq j\leq N\}$.
The space $\mathcal{U}_h$ can be written as $\mathcal{U}_h=\mbox{span}\{\phi_j,1\leq j\leq N\}\oplus\mbox{span}\{\phi_{j+1/2},0\leq j\leq N\}$, where the
two classes of basis functions are explicitly given below (see Fig. \ref{figbasis})
\begin{equation}\begin{array}{l}\phi_j(x)=\left\{\begin{array}{l}\frac{2}{h^2}(x-x_{j-1/2})(x-x_{j-1}),\ x\in I_{j-1},\\
\frac{2}{h^2}(x-x_{j+1/2})(x-x_{j+1}),\ x\in I_j,\\0,\ \mbox{otherwise}\end{array}\right.\mbox{ and  }
\phi_{j+1/2}(x)=\left\{\begin{array}{l}-\frac{4}{h^2}(x-x_j)(x-x_{j+1}),\ x\in I_j,\\0,\ \mbox{otherwise.}\end{array}\right.\end{array}\nonumber\end{equation}

\begin{figure}
 \begin{center}\includegraphics[width=5.5cm,height=4.5cm]{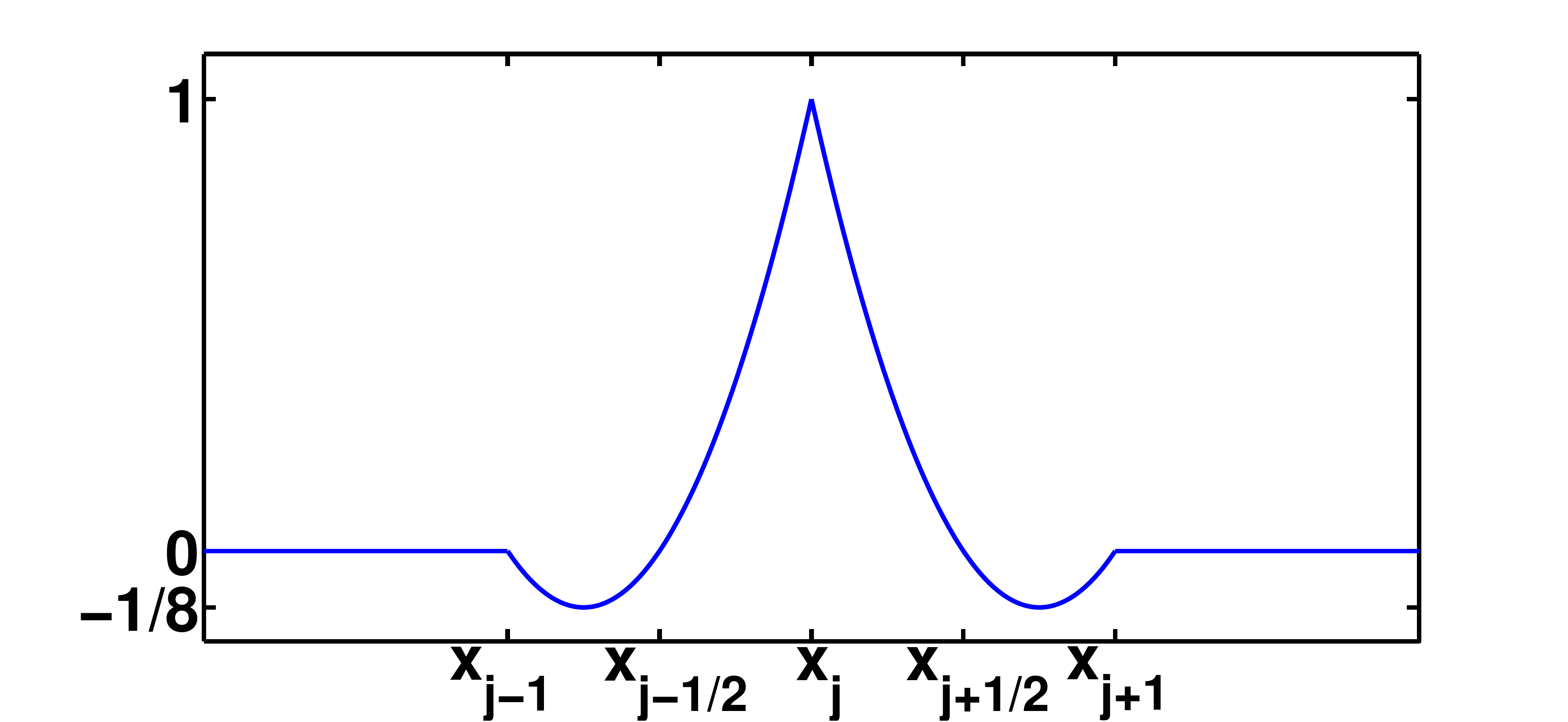}\includegraphics[width=5.5cm,height=4.5cm]{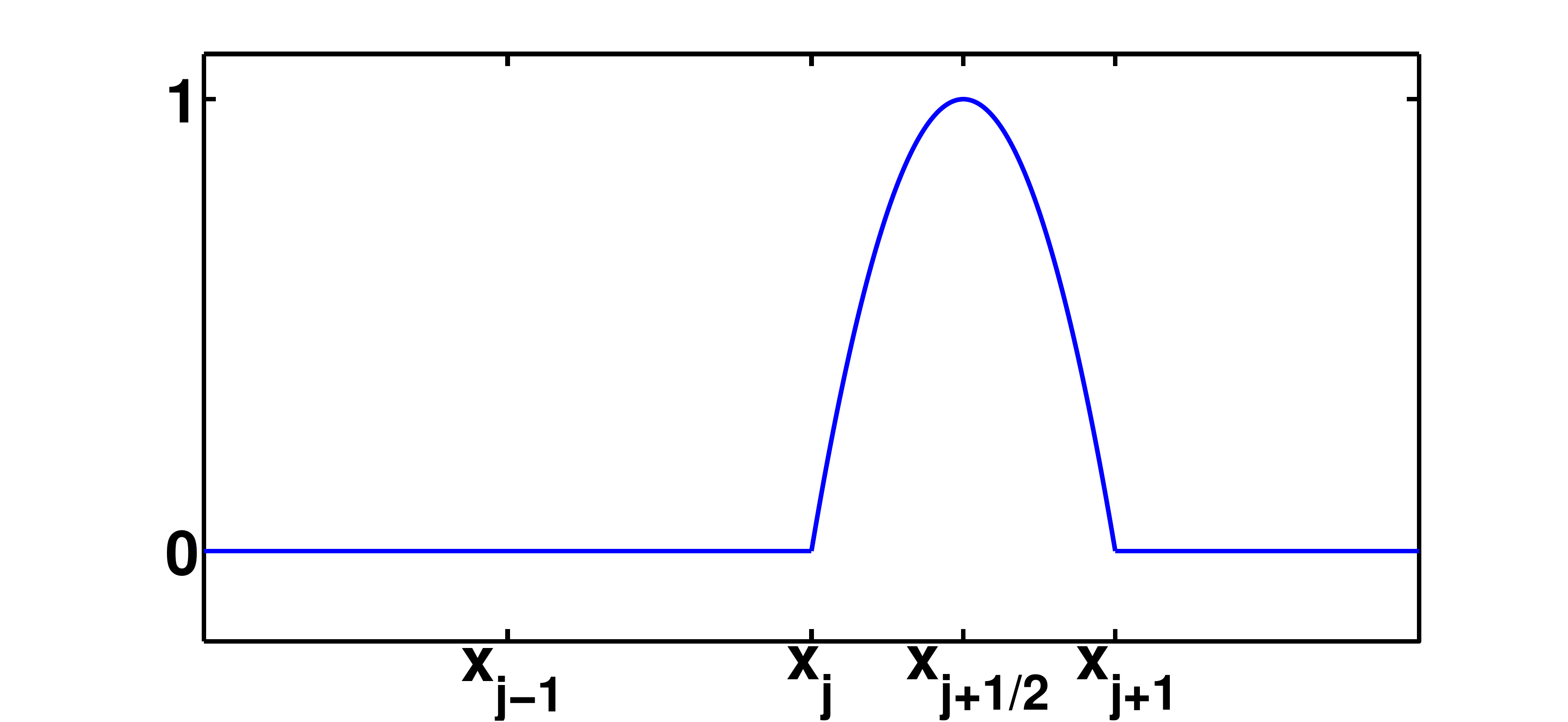}\end{center}
  \caption{The basis functions: $\phi_j$ (left) and $\phi_{j+1/2}$ (right).}\label{figbasis}
\end{figure}
\noindent The quadratic approximation of the adjoint problem (\ref{contwaveadjoint}) is as follows:
\begin{equation}\left\{\begin{array}{l}\mbox{Find }u_h(\cdot,t)\in \mathcal{U}_h\mbox{ s.t. }\frac{d^2}{dt^2}( u_h(\cdot,t),
\varphi)_{L^2(0,1)}
+(u_h(\cdot,t),\varphi)_{H_0^1(0,1)}=0,\forall\varphi\in\mathcal{U}_h,\\u_h(x,T)=u_h^0(x),u_{h,t}(x,T)=u_h^1(x),\ x\in(0,1).\end{array}\right.
\label{p2waveadjointvar}\end{equation}
Since $u_h(\cdot,t)\in\mathcal{U}_h$, it admits the decomposition $u_h(x,t)=\sum_{j=1}^{2N+1}U_{j/2}(t)\phi_{j/2}(x)$. The function
$u_h(\cdot,t)$ can be identified with the vector of its coefficients, $\mathbf{U}_h(t):=(U_{j/2}(t))_{1\leq j\leq 2N+1}$ (in the sequel, all vectors under consideration will be column vectors). Thus,
using $\varphi=\phi_{j/2}$, $1\leq j\leq 2N+1$, as test functions in (\ref{p2waveadjointvar}), system (\ref{p2waveadjointvar}) can be written as the following system of $2N+1$ second-order linear differential equations (ODEs):
\begin{equation}M_h\mathbf{U}_{h,tt}(t)+S_h\mathbf{U}_h(t)=0,\ \mathbf{U}_h(T)=\mathbf{U}^0_h,\
\mathbf{U}_{h,t}(T)=\mathbf{U}_h^1,\label{p2adjoint}\end{equation}
where $M_h$ and $S_h$ are the following $(2N+1)\times (2N+1)$ \textit{pentha-diagonal mass} and \textit{stiffness} matrices
$$M_h=\left(\begin{array}{ccccccccccc}
                        \frac{8h}{15} & \frac{h}{15} & 0 & 0 & 0 & 0 & \cdots & 0 & 0 & 0 & 0 \\
                        \frac{h}{15} & \frac{4h}{15} & \frac{h}{15} & -\frac{h}{30} & 0 & 0 & \cdots & 0 & 0 & 0 & 0 \\
                        0 & \frac{h}{15} & \frac{8h}{15} & \frac{h}{15} & 0 & 0 & \cdots & 0 & 0 & 0 & 0 \\
                        0 & -\frac{h}{30} & \frac{h}{15} & \frac{4h}{15} & \frac{h}{15} & -\frac{h}{30} & \cdots & 0 & 0 & 0 & 0 \\
                        \cdots & \cdots & \ddots & \ddots & \ddots & \ddots & \ddots & \cdots & \cdots & \cdots & \cdots \\
                        0 & 0 & 0 & 0 & 0 & 0 & \cdots & -\frac{h}{30} & \frac{h}{15} & \frac{4h}{15} & \frac{h}{15} \\
                        0 & 0 & 0 & 0 & 0 & 0 & \cdots & 0 & 0 & \frac{h}{15} & \frac{8h}{15} \\
                      \end{array}
                    \right)$$
and
$$S_h=\left(
                      \begin{array}{ccccccccccc}
                        \frac{16}{3h} & -\frac{8}{3h} & 0 & 0 & 0 & 0 & \cdots & 0 & 0 & 0 & 0 \\
                        -\frac{8}{3h} & \frac{14}{3h} & -\frac{8}{3h} & \frac{1}{3h} & 0 & 0 & \cdots & 0 & 0 & 0 & 0 \\
                        0 & -\frac{8}{3h} & \frac{16}{3h} & -\frac{8}{3h} & 0 & 0 & \cdots & 0 & 0 & 0 & 0 \\
                        0 & \frac{1}{3h} & -\frac{8}{3h} & \frac{14}{3h} & -\frac{8}{3h} & \frac{1}{3h} & \cdots & 0 & 0 & 0 & 0 \\
                        \cdots & \cdots & \ddots & \ddots & \ddots & \ddots & \ddots & \cdots & \cdots & \cdots & \cdots \\
                        0 & 0 & 0 & 0 & 0 & 0 & \cdots & \frac{1}{3h} & -\frac{8}{3h} & \frac{14}{3h} & -\frac{8}{3h} \\
                        0 & 0 & 0 & 0 & 0 & 0 & \cdots & 0 & 0 & -\frac{8}{3h} & \frac{16}{3h} \\
                      \end{array}
                    \right).$$
We introduce the discrete analogue of $H_0^1(0,1)$, $L^2(0,1)$ and $H^{-1}(0,1)$ spaces to be
$$\mathcal{H}_h^i:=\{\mathbf{F}_h=(F_{j/2})_{1\leq j\leq 2N+1}\in\cc^{2N+1}\mbox{ s.t. }||\mathbf{F}_h||_{h,i}<\infty\},\quad
i=-1, 0,1.$$
For the elements of the space $\mathcal{H}_h^1$ we have to impose the additional requirement $F_0=F_{N+1}=0$. The inner products defining the discrete spaces $\mathcal{H}_h^i$, $i=-1,0,1$, are given by
$$(\mathbf{F}_h,\mathbf{G}_h)_{h,i}:=((M_hS_h^{-1})^{1-i}S_h\mathbf{F}_h,\mathbf{G}_h)_{\cc^{2N+1}},$$
and the norms are given by $||\mathbf{F}_h||_{h,i}^2:=(\mathbf{F}_h,\mathbf{F}_h)_{h,i}$, for all $i=-1,0,1$.
Here, $(\cdot,\cdot)_{\cc^{2N+1}}$ is the inner product in the Euclidian space $\cc^n$, defined by $(\mathbf{F}_h,\mathbf{G}_h)_{\cc^{2N+1}}:=
\sum_{k=1}^{2N+1}F_{k/2}\overline{G}_{k/2}$ (the overline symbol denotes complex conjugation).

For $f_h\in\mathcal{U}_h$ with coefficients $\mathbf{F}_h=(F_{j/2})_{1\leq j\leq 2N+1}\in \mathcal{H}_h^1$, we introduce the following notations
for the three possible discrete derivatives on each nodal point (the \textit{forward}, the \textit{backward} and the \textit{midpoint} one):
\begin{equation}\begin{array}{c}\partial_h^+F_j:=f_{h,x}(x_j+)=-\frac{F_{j+1}-4F_{j+1/2}+3F_j}{h},
\ \partial_h^-F_{j+1}:=f_{h,x}(x_{j+1}-)=\frac{F_j-4F_{j+1/2}+3F_{j+1}}{h},\medskip\\ \partial_hF_{j+1/2}:=f_{h,x}(x_{j+1/2})
=\frac{F_{j+1}-F_j}{h}\end{array}\label{DiscreteOperators}\end{equation}
and the values of $f_h$ at $x_{j+1/4}:=(j+1/4)h$ and $x_{j+3/4}:=(j+3/4)h$, $0\leq j\leq N$:
$$F_{j+1/4}:=f_h(x_{j+1/4})=\frac{3}{8}F_j+\frac{3}{4}F_{j+1/2}-\frac{1}{8}F_{j+1},\ F_{j+3/4}
:=f_h(x_{j+3/4})=-\frac{1}{8}F_j+\frac{3}{4}F_{j+1/2}+\frac{3}{8}F_{j+1}.$$

With these notations, it is easy to check that the $||\cdot||_{h,1}$ and $||\cdot||_{h,0}$ - norms admit the following representations:
\begin{equation}\begin{array}{c}||\mathbf{F}_h||_{h,1}^2=\frac{h}{6}\sum\limits_{j=0}^N\big(|\partial_h^+F_j|^2+4|\partial_hF_{j+1/2}|^2+|\partial_h^-F_{j+1}|^2\big)
\\||\mathbf{F}_h||_{h,0}^2=\frac{h}{90}\sum\limits_{j=0}^N\big(7|F_j|^2+32|F_{j+1/4}|^2+12|F_{j+1/2}|^2+32|F_{j+3/4}|^2+7|F_{j+1}|^2\big).\end{array}\label{discretenorms}
\end{equation}

Set $\mathcal{V}_h:=\mathcal{H}_h^1\times\mathcal{H}_h^0$ and its dual $\mathcal{V}_h':=\mathcal{H}_h^{-1}\times\mathcal{H}_h^0$,  the
duality product $\langle\cdot,\cdot\rangle_{\mathcal{V}_h',\mathcal{V}_h}$ between $\mathcal{V}_h'$ and $\mathcal{V}_h$ being defined as
$\langle(\mathbf{F}_{h,1},\mathbf{G}_{h,1}),(\mathbf{F}_{h,2},\mathbf{G}_{h,2})\rangle_{\mathcal{V}_h',\mathcal{V}_h}:=
(\mathbf{F}_{h,1},\mathbf{F}_{h,2})_{h,0}+(\mathbf{G}_{h,1},\mathbf{G}_{h,2})_{h,0}$.

Problem (\ref{p2adjoint}) is well posed in $\mathcal{H}_h^1\times\mathcal{H}_h^0$. The total energy of its solutions is conserved in time:
\begin{equation}\mathcal{E}_h(\mathbf{U}_h^0,\mathbf{U}_h^1):=\frac{1}{2}(||\mathbf{U}_h(t)||_{h,1}^2+||\mathbf{U}_{h,t}(t)||_{h,0}^2).\label{p2energy}\end{equation}

One of the goals of this paper is to study discrete versions of the observability inequality (\ref{contobsineq}) of the form
\begin{equation}\mathcal{E}_h(\mathbf{U}_h^0,\mathbf{U}_h^1)\leq C_h(T)\int\limits_0^T||B_h\mathbf{U}_h(t)||^2_{\cc^{2N+1}}\,dt,
\label{p2ObservabilityInequalityGeneral}\end{equation}
where $B_h$ is a $(2N+1)\times(2N+1)$ \textit{observability matrix operator}. The observability inequality (\ref{p2ObservabilityInequalityGeneral}) makes sense for rather general matrices $B_h$, corresponding, for instance, to the 
observability from any open subset contained in the spatial domain $(0,1)$. But,  within this paper, we focus on the particular case of \textit{boundary observation operators} $B_h$,
in the sense that they approximate the normal derivative $u_x(x,t)$ of the solution of the continuous adjoint problem (\ref{contwaveadjoint}) at
$x=1$. The main example of such boundary matrix operators $B_h$ that will be
used throughout this paper is as follows:
\begin{equation}
B_{ij}:=\left\{\begin{array}{ll}-\frac{1}{h},&(i,j)=(2N+1,2N),\\0,&\mbox{otherwise.}\end{array}\right.\label{ObservationOperators}\end{equation}

The operator $B_h$ is also the one used for the finite difference semi-discretization in \cite{InfZua}. Let us remark that at the discrete level there are different  ways to approximate the normal derivative of the continuous solution of (\ref{contwaveadjoint}). Since $B_h\mathbf{U}_h(t)$ is a vector and $u_x(x,t)$ is a scalar, the way in which $B_h\mathbf{U}_h(t)$
approximates $u_x(1,t)$ needs to be further explained. Remark that $B_h$ in (\ref{ObservationOperators}) is almost a null matrix, excepting the penultimate component on the last row. The last component of $B_h\mathbf{U}_h(t)$, the only non-trivial one, equals to $u_{h,x}(x_{N+1/2},t)$. The consistency analysis shows that if $f$ is a sufficiently regular function and $f_h$ is its quadratic interpolation, then $f_{h,x}(x_{N+1/2})$ is a first-order approximation of $f_x(1)$.

We are interested  in observability inequalities (\ref{p2ObservabilityInequalityGeneral}) in a finite,
but sufficiently large observability time, say
$T>T^{\star}>0$. In this paper we show that, when working on the whole discrete space $\mathcal{V}_h\times\mathcal{V}_h$, the observability constant $C_h(T)$ blows-up as $h\to 0$, whatever $T>0$ is. One of the main contributions of this paper is to design appropriate subspaces
$\mathcal{S}_h\subset\mathcal{V}_h$ on which the observability constant $C_h(T)$ is uniformly bounded as $h\to 0$.

We will also prove that the discrete version of (\ref{contDirectInequality}) below holds uniformly as $h\to 0$ in the approximate space $\mathcal{V}_h$ for all $T>0$:
\begin{equation}c_h(T)\int\limits_0^T||B_h\mathbf{U}_h(t)||^2_{\cc^{2N+1}}\,dt\leq \mathcal{E}_h(\mathbf{U}_h^0,\mathbf{U}_h^1).
\label{p2DirectInequalityGeneral}\end{equation}

 Once the observability problem is well understood, we are in conditions to address the discrete control problem. For a discrete control function $\mathcal{V}_h$, we consider the following non-homogeneous problem:
\begin{equation}M_h\mathbf{Y}_{h,tt}(t)+S_h\mathbf{Y}_h(t)=-B_h^*\mathbf{V}_h(t),
\quad \mathbf{Y}_h(0)=\mathbf{Y}_h^0,\quad \mathbf{Y}_{h,t}(0)=\mathbf{Y}_h^1. \label{p2controlledpbm}\end{equation}

Here, the superscript $*$ denotes matrix transposition. Multiplying system (\ref{p2controlledpbm}) by any solution $\mathbf{U}_h(t)$ of the adjoint problem (\ref{p2adjoint}), integrating in time and
imposing that at $t=T$ the solution is at rest, i.e.
\begin{equation}\langle(\mathbf{Y}_{h,t}(T),-\mathbf{Y}_h(T)),
(\mathbf{U}_h^0,\mathbf{U}_h^1)\rangle_{\mathcal{V}_h',\mathcal{V}_h}=0,\
\forall (\mathbf{U}_h^0,\mathbf{U}_h^1)\in\mathcal{V}_h,\label{p2ExactControl}\end{equation}
we obtain that $\mathbf{V}_h(t)$ necessarily satisfies the following identity which fully characterizes all possible exact controls $\mathbf{V}_h(t)$:
\begin{equation}\int\limits_0^T(\mathbf{V}_h(t),B_h\mathbf{U}_h(t))_{\cc^{2N+1}}\,dt=
\langle(\mathbf{Y}_h^1,-\mathbf{Y}_h^0),(\mathbf{U}_h(0),\mathbf{U}_{h,t}(0))\rangle_{\mathcal{V}_h',\mathcal{V}_h},\quad
\forall (\mathbf{U}_h^0,\mathbf{U}_h^1)\in\mathcal{V}_h.
\label{identityp2control}\end{equation}

In view of this, we introduce the following discrete version of the quadratic functional (\ref{contfunctional}):
\begin{equation}\mathcal{J}_h(\mathbf{U}_h^0,\mathbf{U}_h^1)=
\frac{1}{2}\int\limits_0^T||B_h\mathbf{U}_h(t)||^2_{\cc^{2N+1}}\,dt-
\langle(\mathbf{Y}_h^1,-\mathbf{Y}_h^0),
(\mathbf{U}_h(0),\mathbf{U}_{h,t}(0))\rangle_{\mathcal{V}_h',\mathcal{V}_h},\label{p2FunctionalGeneral}\end{equation}
$\mathbf{U}_h(t)$ being the solution of the adjoint problem (\ref{p2adjoint}) with initial data
$(\mathbf{U}_h^0,\mathbf{U}_h^1)$
and $(\mathbf{Y}_h^1,\mathbf{Y}_h^0)\in\mathcal{V}_h'$ the initial data to be controlled in (\ref{p2controlledpbm}).

The functional $\mathcal{J}_h$ is continuous and strictly convex. Thus, provided it is coercive (which is actually what the uniform observability inequality   guarantees), it has an unique minimizer $(\mathbf{\tilde{U}}_h^0,\mathbf{\tilde{U}}_h^1)\in\mathcal{S}_h$ whose Euler-Lagrange equations are as follows:
\begin{equation}\int\limits_0^T(B_h\mathbf{\tilde{U}}_h(t),B_h\mathbf{U}_h(t))_{\cc^{2N+1}}\,dt=
\langle(\mathbf{Y}_h^1,-\mathbf{Y}_h^0),
(\mathbf{U}_h(0),\mathbf{U}_{h,t}(0))\rangle_{\mathcal{V}_h',\mathcal{V}_h},\label{p2EulerLagrange}\end{equation}
for all $(\mathbf{U}_h^0,\mathbf{U}_h^1)\in\mathcal{S}_h$ and $\mathbf{U}_h(t)$ the corresponding solution of (\ref{p2adjoint}).
The discrete HUM control is then
\begin{equation}\mathbf{V}_h(t)=\mathbf{\tilde{V}}_h(t):=B_h\mathbf{\tilde{U}}_h(t).\label{p2OptimalControl}\end{equation}

Let us briefly comment the analogies between the identities (\ref{contHUMcontrol}) and (\ref{p2OptimalControl}). As we said, when $B_h$ is
a boundary observability matrix operator, like for example the one in (\ref{ObservationOperators}), $B_h\mathbf{\tilde{U}}_h(t)$ is a vector whose last component $\tilde{v}_h(t)$, the only non-trivial one, approximates the normal derivative of the
solution to the adjoint continuous wave equation (\ref{contwaveadjoint}). Accordingly, the controls $-B_h^*\mathbf{V}_h(t)$ only act on $y_{N}(t)$ when
$\mathbf{V}_h(t)$ is the numerical control obtained by (\ref{p2OptimalControl}). Consequently, the boundary observability operator $B_h$ does not act really at $x=1$, but at $x=x_{N+1/2}$, being
in fact an internal control acting on a single point which is closer and closer to the boundary as the mesh size parameter becomes smaller,
so that in the limit as $h\to 0$ it becomes a boundary control.

Observe also that the control problem we deal with is a coupled system of non-homogeneous ODEs modeling the interaction between the nodal
and the midpoint components. Thus, the node $x_{N+1/2}$ lies in fact
on the boundary of the midpoint component. Consequently, the controls $-B_h^*\mathbf{V}_h(t)$ in (\ref{p2controlledpbm}), with $\mathbf{V}_h(t)$ as in (\ref{p2OptimalControl}), are really natural approximations of the continuous boundary controls $v$ in (\ref{contwavecontrolled}).

\section{Fourier analysis of the $P_2$-finite element method}\label{SectFour}
For the sake of completeness, we recall the spectral analysis of this quadratic finite element method, following \cite{HRS}. The spectral problem associated to the adjoint system (\ref{p2waveadjointvar}) is as follows:
\begin{equation}\mbox{Find }(\Lambda_h,\tilde{\varphi}_h)\in\rr\times\mathcal{U}_h\mbox{ such that }
(\tilde{\varphi}_h,\phi_h)_{H_0^1}=\Lambda_h(\tilde{\varphi}_h,\phi_h)_{L^2},\ \forall\phi_h\in\mathcal{U}_h.\label{p2spectralvar}\end{equation}

Due to the symmetry and the coercivity of the bi-linear forms generated by the scalar products $(\cdot,\cdot)_{L^2}$ and $(\cdot,\cdot)_{H_0^1}$, we have
$\Lambda_h>0$. Let $\mathbf{\tilde{\varphi}}_h=(\tilde{\varphi}_{j/2})_{1\leq j\leq 2N+1}'$ be the components of
the eigenfunction $\tilde{\varphi}_h$. The pair $(\Lambda_h,\mathbf{\tilde{\varphi}}_h)$ is a generalized eigensolution corresponding
to the pair of matrices $(S_h,M_h)$, i.e.
\begin{equation}S_h\mathbf{\tilde{\varphi}}_h=\Lambda_hM_h\mathbf{\tilde{\varphi}}_h.\label{p2spectral}\end{equation}

Consider the normalized eigenvalues $\Lambda:=h^2\Lambda_h$. The system (\ref{p2spectral}) is a pair of two equations:
\begin{equation}\label{p2spectral1}
-\frac{8}{3}\tilde{\varphi}_j+\frac{16}{3}\tilde{\varphi}_{j+1/2}-\frac{8}{3}\tilde{\varphi}_{j+1}-
\Lambda\big(\frac{1}{15}\tilde{\varphi}_j+\frac{8}{15}\tilde{\varphi}_{j+1/2}+\frac{1}{15}\tilde{\varphi}_{j+1}\big)=0,
\end{equation}
for $0\leq j\leq N$, and
\begin{equation}\frac{1}{3}\tilde{\varphi}_{j-1}-\frac{8}{3}\tilde{\varphi}_{j-1/2}+\frac{14}{3}\tilde{\varphi}_j-\frac{8}{3}\tilde{\varphi}_{j+1/2}
+\frac{1}{3}\tilde{\varphi}_{j+1}-
\Lambda\big(-\frac{1}{30}\tilde{\varphi}_{j-1}+\frac{1}{15}\tilde{\varphi}_{j-1/2}+\frac{4}{15}\tilde{\varphi}_j+\frac{1}{15}\tilde{\varphi}_{j+1/2}
-\frac{1}{30}\tilde{\varphi}_{j+1}\big)=0,\label{p2spectral2}\end{equation}
depending on $1\leq j\leq N$ and with the boundary condition $\tilde{\varphi}_0=\tilde{\varphi}_{N+1}=0$.

\textbf{The acoustic and optic modes. }From (\ref{p2spectral1}),  we obtain that for $\Lambda\not=10$, the values of $\mathbf{\tilde{\varphi}}_h$ at the midpoints
can be obtained according to the two neighboring nodal values as follows:
\begin{equation}\tilde{\varphi}_{j+1/2}=\frac{40+\Lambda}{8(10-\Lambda)}(\tilde{\varphi}_j+\tilde{\varphi}_{j+1}),\ \forall 0\leq j\leq N.
\label{p2spectral3}\end{equation}

Replacing (\ref{p2spectral3}) into (\ref{p2spectral2}), we obtain that
\begin{equation}-\frac{1}{2}\tilde{\varphi}_{j-1}+\frac{3\Lambda^2-104\Lambda+240}{\Lambda^2+16\Lambda+240}\tilde{\varphi}_j
-\frac{1}{2}\tilde{\varphi}_{j+1}=0,\ \forall 1\leq j\leq N,\mbox{ with }\tilde{\varphi}_0=\tilde{\varphi}_{N+1}=0.\label{p2spectral4}\end{equation}

It is easy to check that $\tilde{\varphi}_j=\sin(k\pi x_j)$, with $1\leq k\leq N$, solves (\ref{p2spectral4}). Then
the normalized eigenvalues $\Lambda$ verify the identity
\begin{equation}\cos(k\pi h)=w(\Lambda), \mbox{ with }w(\Lambda)=\frac{3\Lambda^2-104\Lambda+240}{\Lambda^2+16\Lambda+240}.\label{p2spectral5}\end{equation}

For each $\eta\in[0,\pi]$, consider the following second-order algebraic equation in $\Lambda=\Lambda(\eta)$:
\begin{equation}(3-\cos(\eta))\Lambda^2-2\Lambda(52+8\cos(\eta))+240(1-\cos(\eta))=0,\label{p2spectral6}\end{equation}
whose solutions are
\begin{equation}\Lambda=\Lambda^{a}(\eta):=\frac{120\sin^2(\eta/2)}{11+4\cos^2(\eta/2)+
\sqrt{\Delta(\eta)}} \mbox{ and }\Lambda=\Lambda^{o}(\eta):=
\frac{22+8\cos^2(\eta/2)+2\sqrt{\Delta(\eta)}}{1+\sin^2(\eta/2)},\label{FourierSymbols}\end{equation}
with $\Delta(\eta):=1+268\cos^2(\eta/2)-44\cos^4(\eta/2)$. The superscripts $a$ and $o$ stand for \textit{acoustic} and \textit{optic}.
We will also need the square roots of the Fourier symbols (\ref{FourierSymbols}), the so called
\textit{dispersion relations}:
\begin{equation}\lambda^{a}(\eta):=\sqrt{\Lambda^{a}(\eta)}
\mbox{ and }\lambda^{o}(\eta):=\sqrt{\Lambda^{o}(\eta)}.\label{DispersionRelations}\end{equation}

For each $1\leq k\leq N$, set
\begin{equation}\Lambda^{a,k}:=\Lambda^{a}(k\pi h)\mbox{ and }\Lambda^{o,k}:=\Lambda^{o}(k\pi h).\label{p2eigenvalues}\end{equation}
We refer to $(\Lambda^{a,k})_{1\leq k\leq N}$ and $(\Lambda^{o,k})_{1\leq k\leq N}$ as the \textit{acoustic} and the \textit{optic branch} of the spectrum. The corresponding eigenvectors are
\begin{equation}\left\{\begin{array}{l}\tilde{\varphi}^{a,k}_j=\sin(k\pi x_j),\ 0\leq j\leq N+1, \mbox{ and }
\tilde{\varphi}^{a,k}_{j+1/2}=\frac{40+\Lambda^{a,k}}{4(10-\Lambda^{a,k})}\cos\big(\frac{k\pi h}{2}\big)\sin(k\pi x_{j+1/2}),\ 0\leq j\leq N,\smallskip\\
\tilde{\varphi}^{o,k}_j=\sin(k\pi x_j),\ 0\leq j\leq N+1, \mbox{ and }
\tilde{\varphi}^{o,k}_{j+1/2}=\frac{40+\Lambda^{o,k}}{4(10-\Lambda^{o,k})}\cos\big(\frac{k\pi h}{2}\big)\sin(k\pi x_{j+1/2}),\ 0\leq j\leq N.
\end{array}
\right.\label{p2eigenvectors}\end{equation}

\textbf{The resonant mode.} Up to this moment, we have explicitly calculated $2N$ solutions of the eigenvalue problem (\ref{p2spectral}). To do this, we have supposed that $\Lambda\not=10$. But $\Lambda=\Lambda^r:=10$ is also an eigenvalue corresponding to the \textit{resonant} mode, the superscript $r$ standing for
\textit{resonant}. The components of the corresponding eigenvector $\mathbf{\varphi}_h^r$ are
\begin{equation}\tilde{\varphi}_j^r=0,\ 0\leq j\leq N+1,\mbox{ and }\tilde{\varphi}_{j+1/2}^r=(-1)^j,\ 0\leq j\leq N.\label{p2resonanteigenvector}\end{equation}

For any normalized eigenvalue $\Lambda$, define $\Lambda_h:=\Lambda/h^2$, $\lambda:=\sqrt{\Lambda}$,
$\lambda_h:=\sqrt{\Lambda_h}$ (see Figure \ref{figp2eigenvalues}).
\begin{figure}\begin{center}\includegraphics[width=7.5cm,height=5cm]{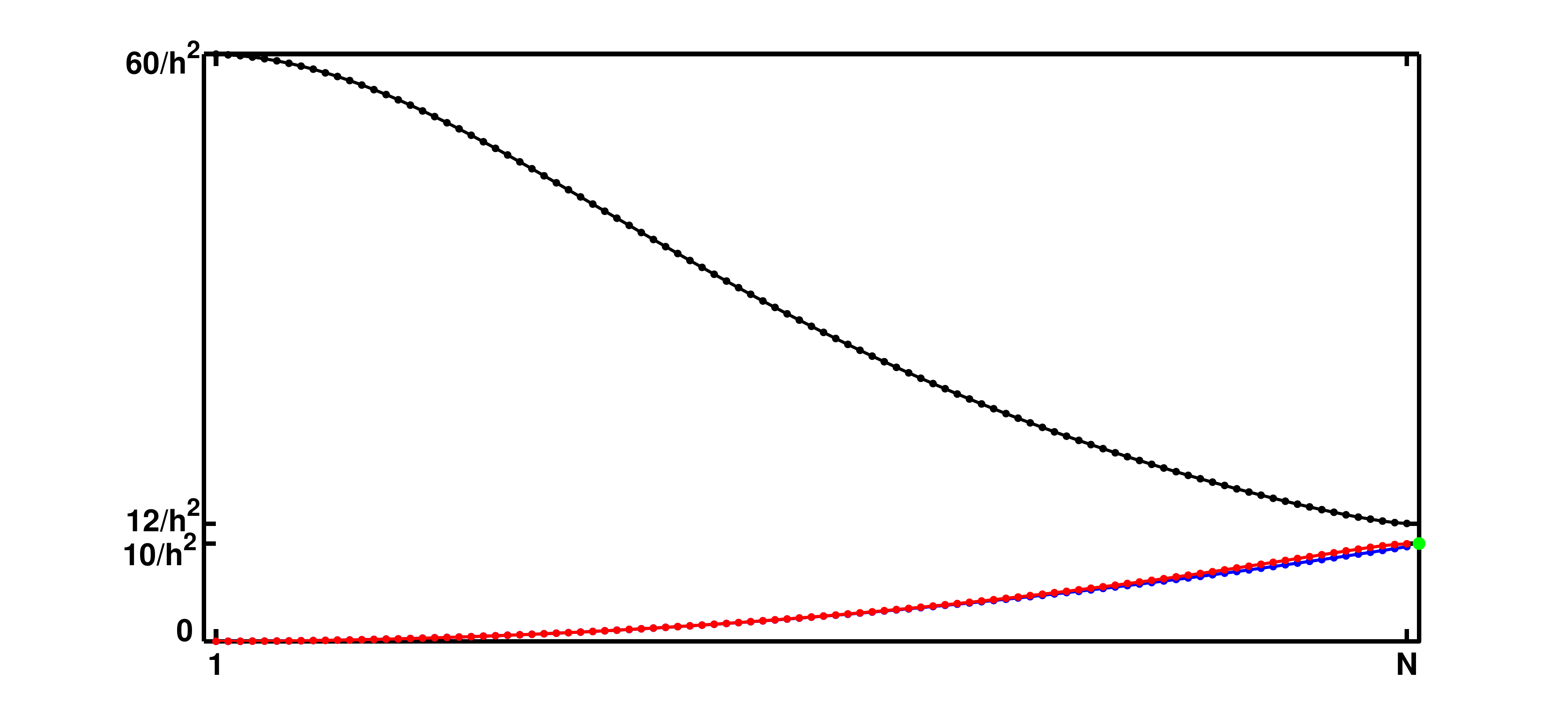}\includegraphics[width=7.5cm,height=5cm]{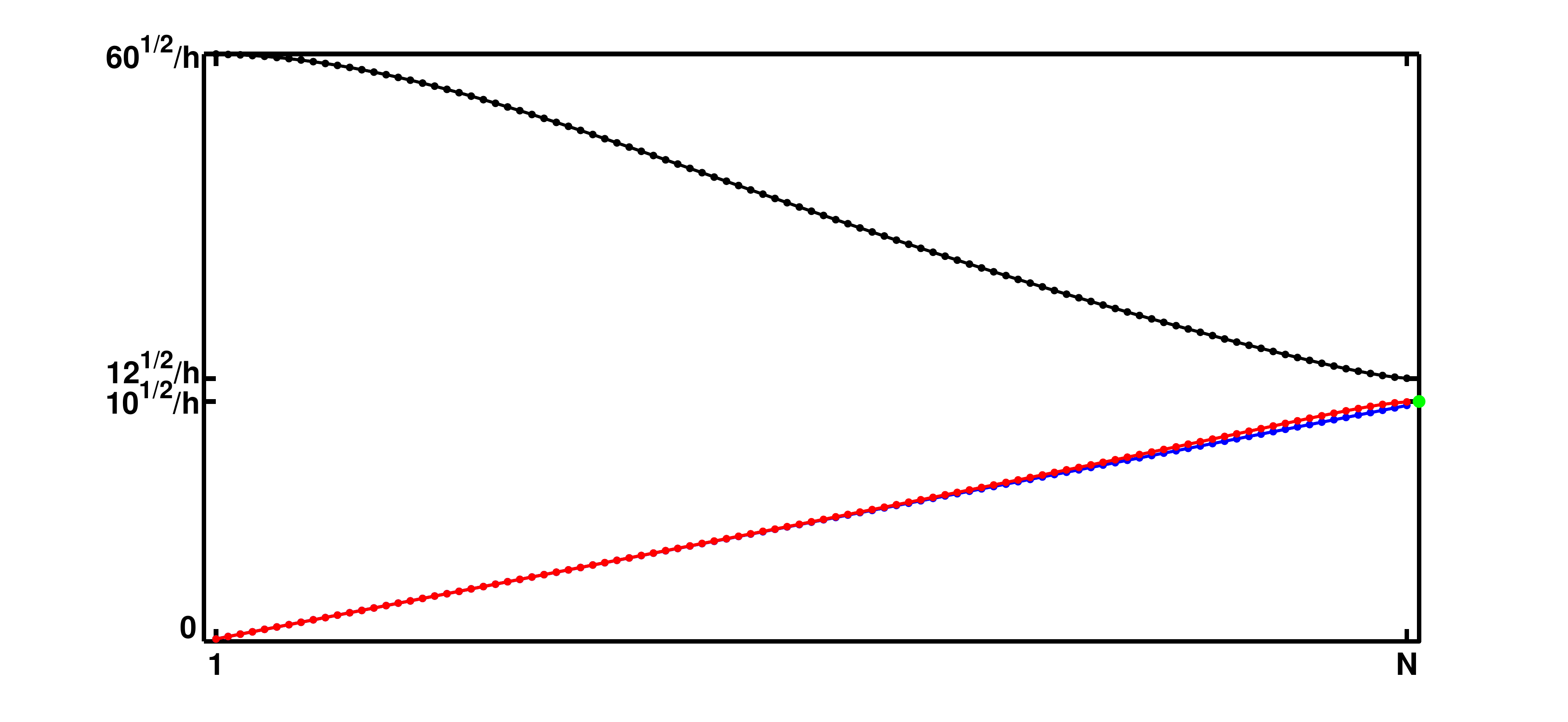}\end{center}
\caption{The eigenvalues $\Lambda_h$ (left) versus their square roots
$\lambda_h$ (right): the continuous ones (blue), the acoustic (red), the optic (black) and the resonant (green) modes. }\label{figp2eigenvalues}\end{figure}

\textbf{Normalized eigenvectors.} For any eigenvector $\mathbf{\tilde{\varphi}}_h\in\{\mathbf{\tilde{\varphi}}^{a,k}_h,\
\mathbf{\tilde{\varphi}}^{o,k}_,\ 1\leq k\leq N,\ \mathbf{\tilde{\varphi}}^r_h\}$, we define the $L^2$-normalized eigenvector
$\mathbf{\varphi}_h:=\mathbf{\tilde{\varphi}}_h/||\mathbf{\tilde{\varphi}}_h||_{h,0}$.

\begin{figure}
 \begin{center}\includegraphics[width=5.5cm,height=4.5cm]{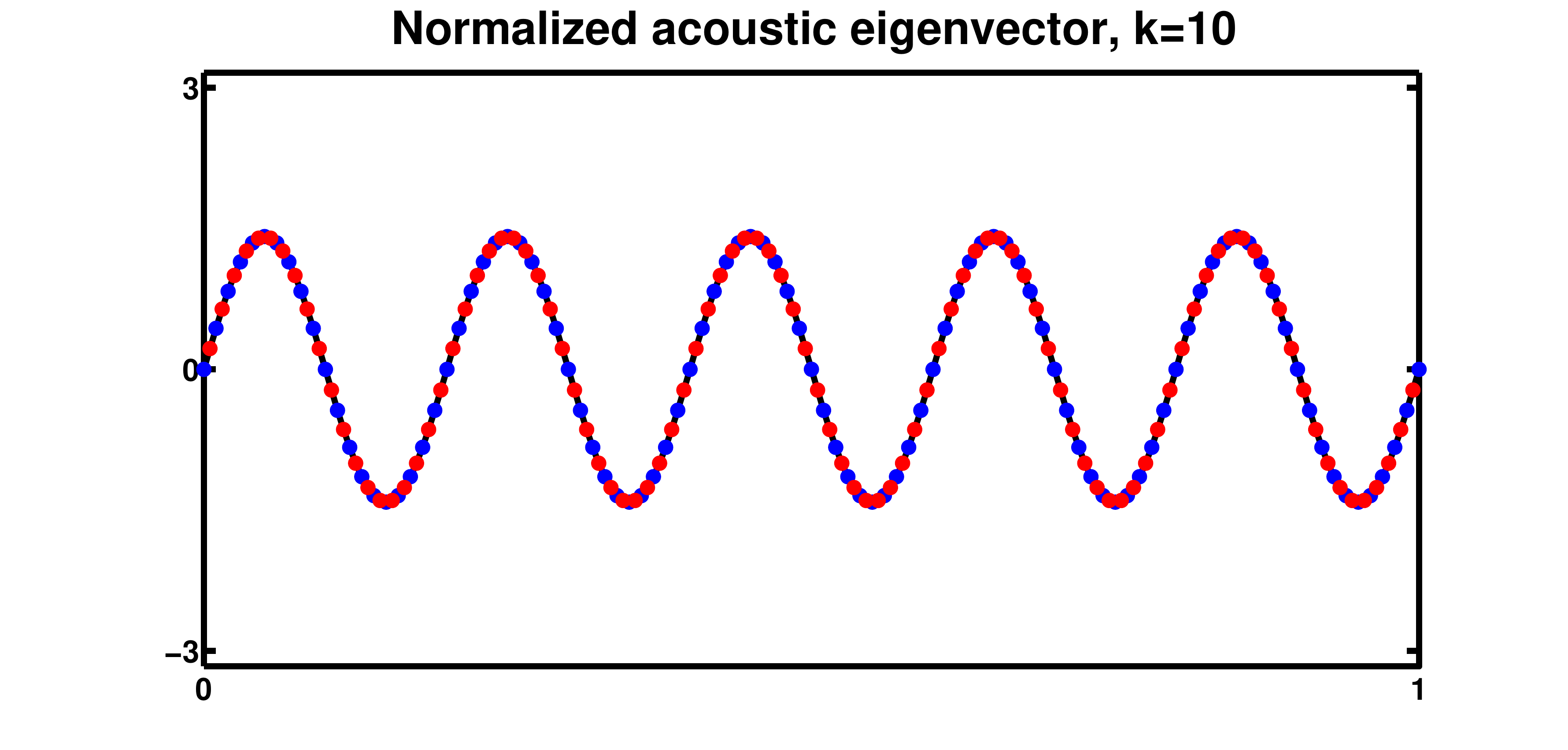}\includegraphics[width=5.5cm,height=4.5cm]{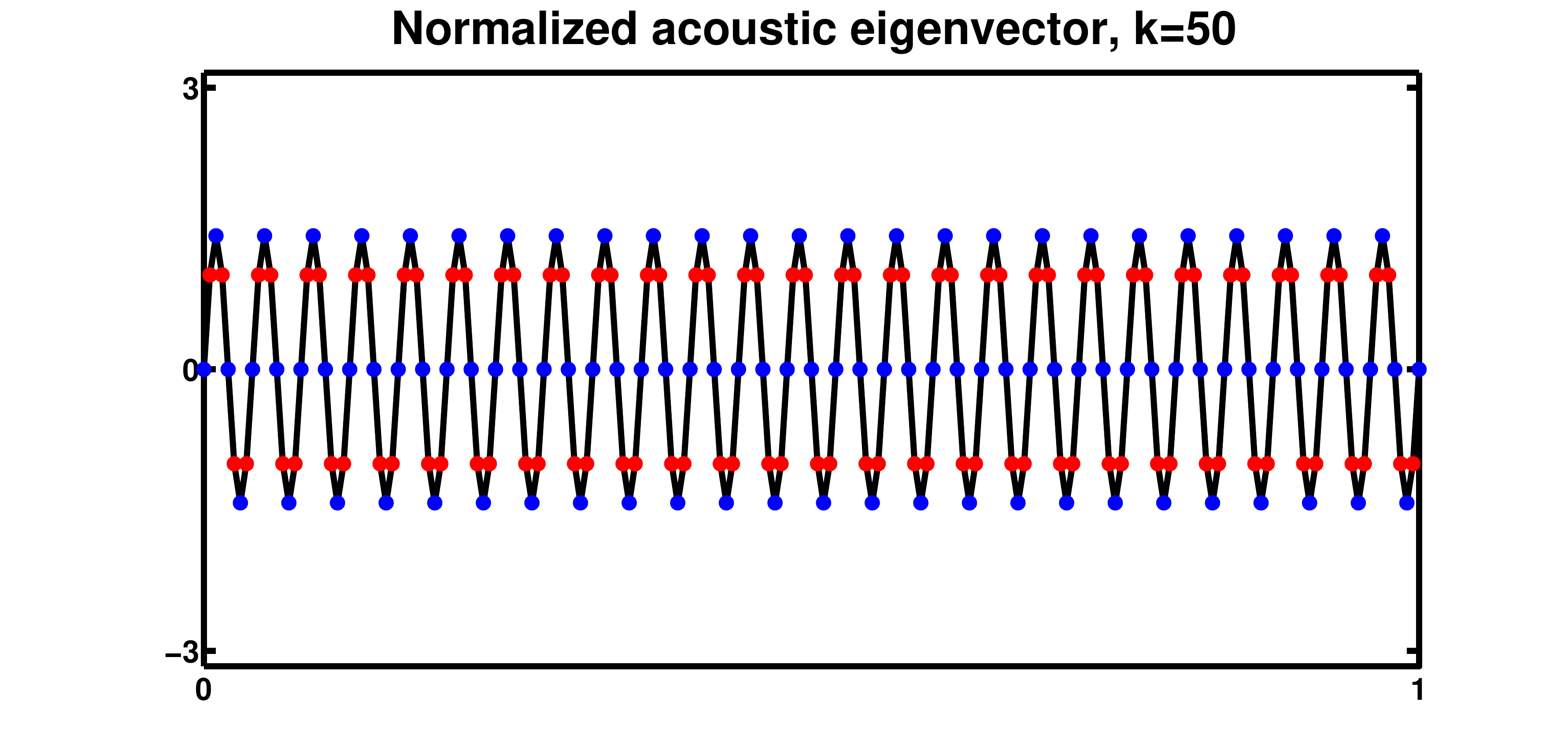}\includegraphics[width=5.5cm,height=4.5cm]{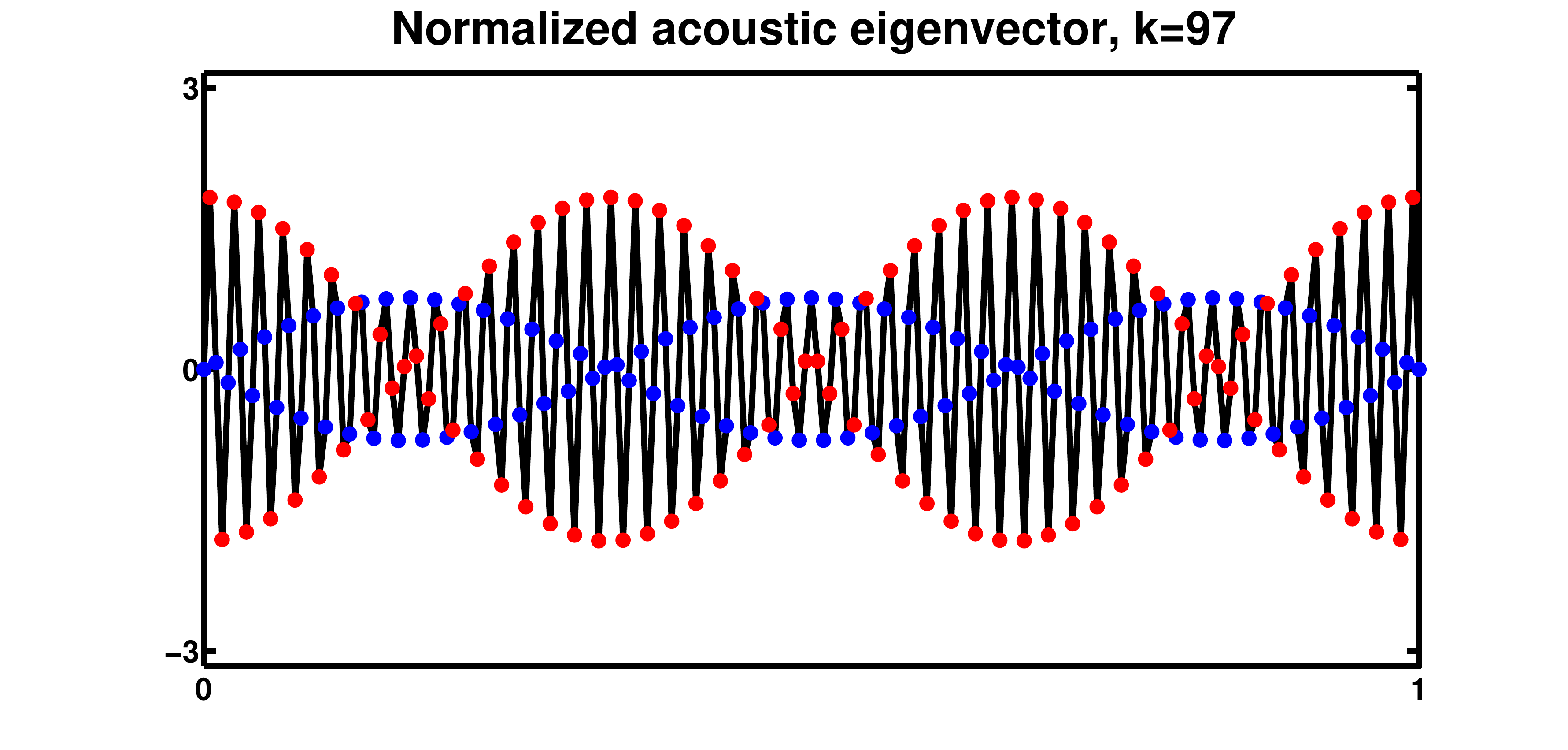}\\
 \includegraphics[width=5.5cm,height=4.5cm]{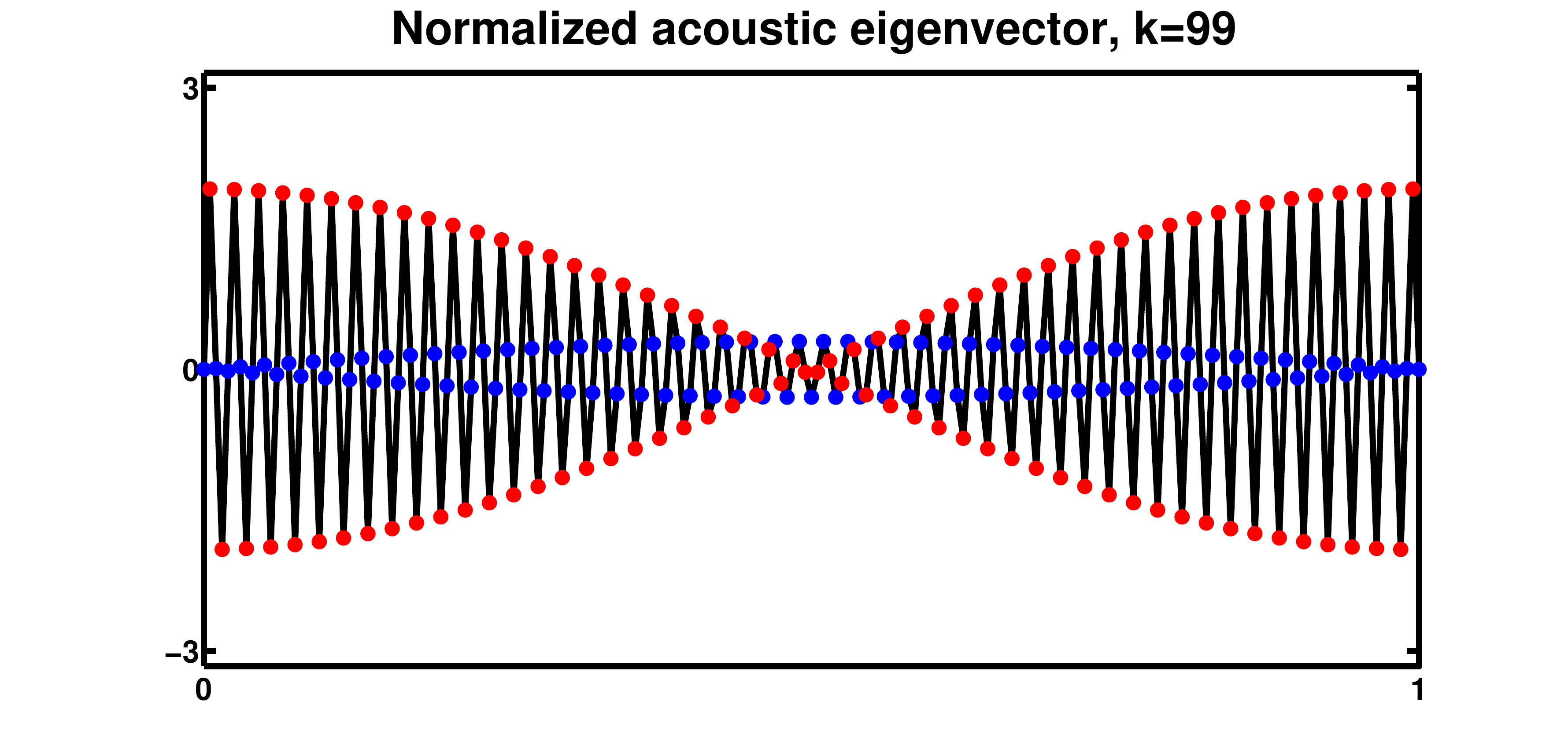}\includegraphics[width=5.5cm,height=4.5cm]{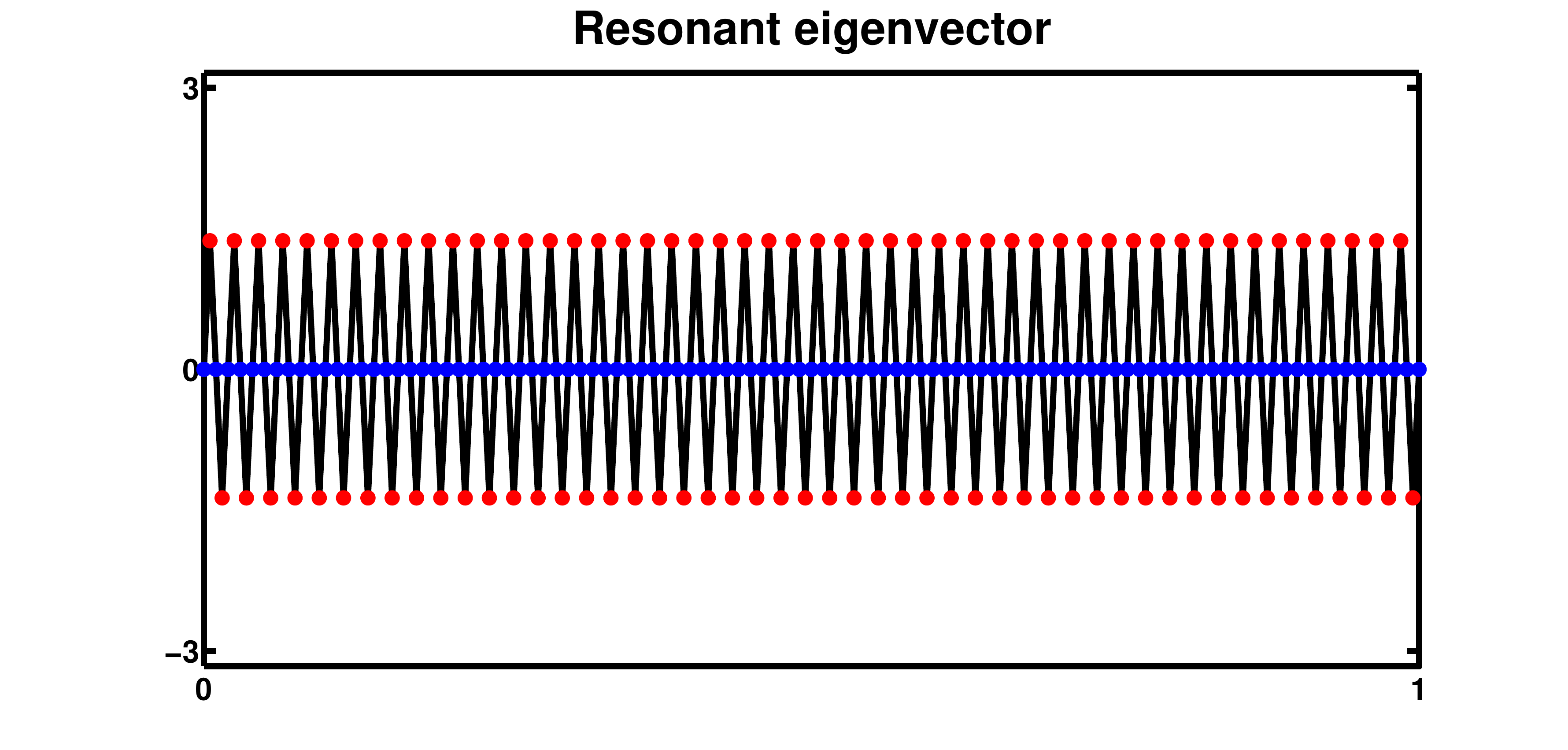}\includegraphics[width=5.5cm,height=4.5cm]{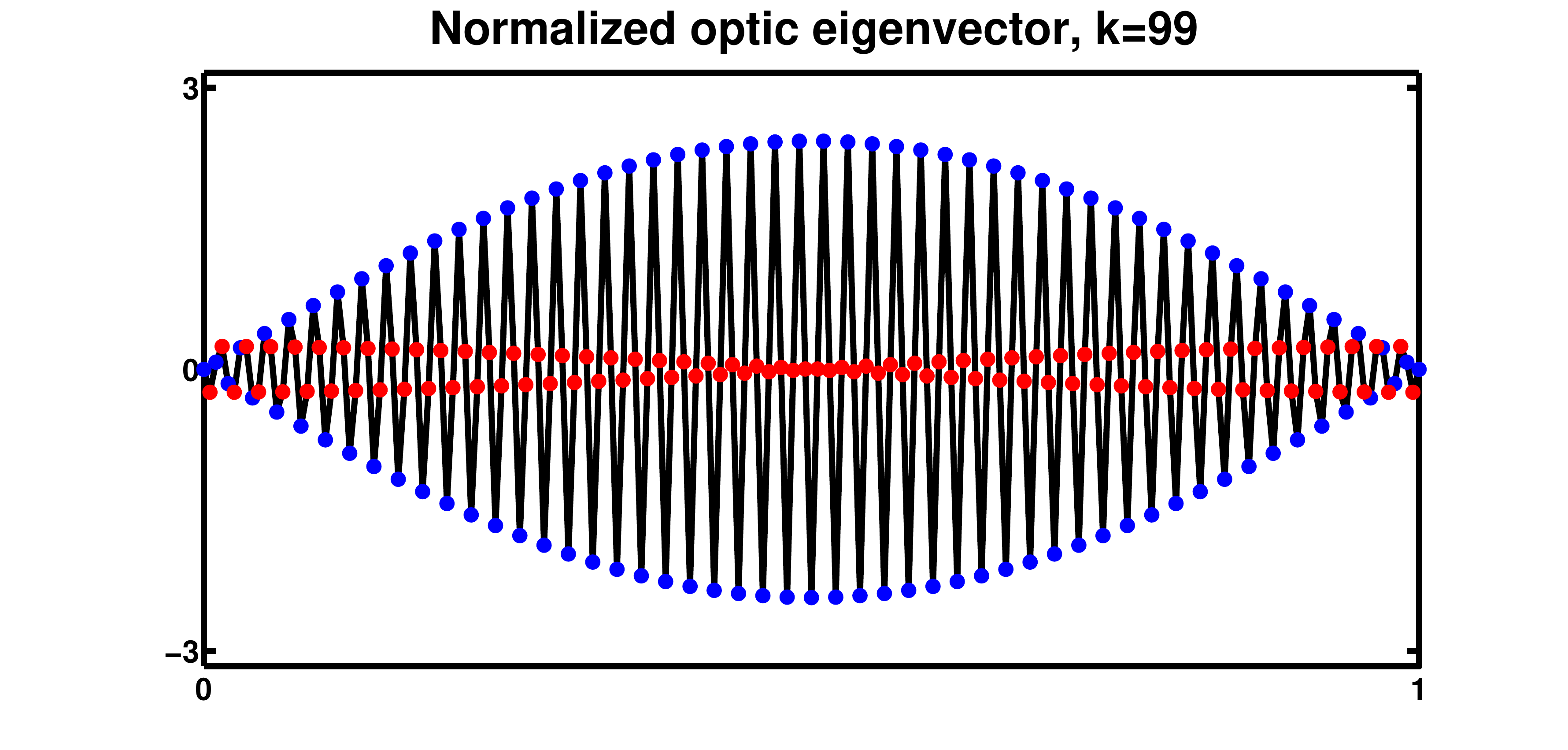}\\
 \includegraphics[width=5.5cm,height=4.5cm]{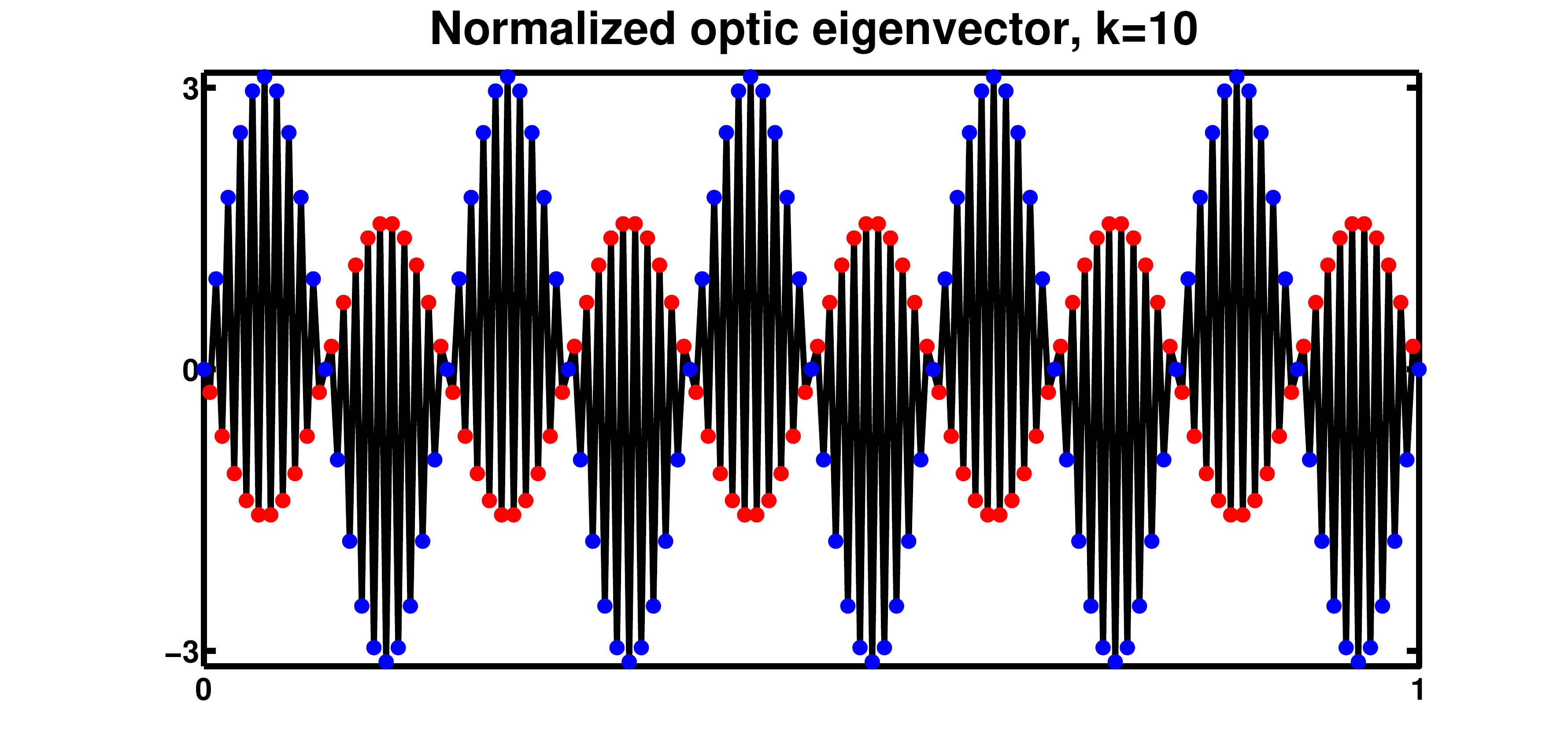}\includegraphics[width=5.5cm,height=4.5cm]{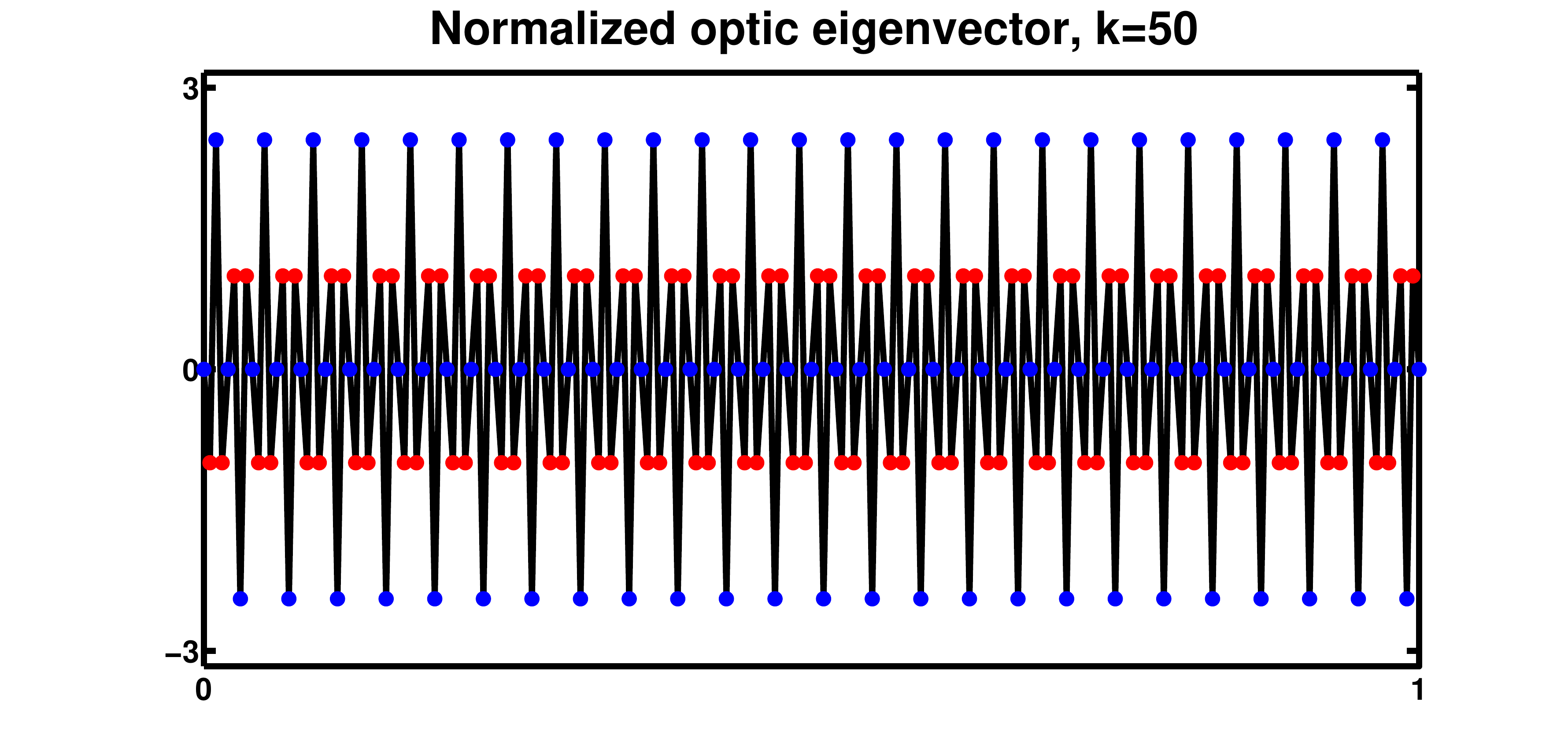}\includegraphics[width=5.5cm,height=4.5cm]{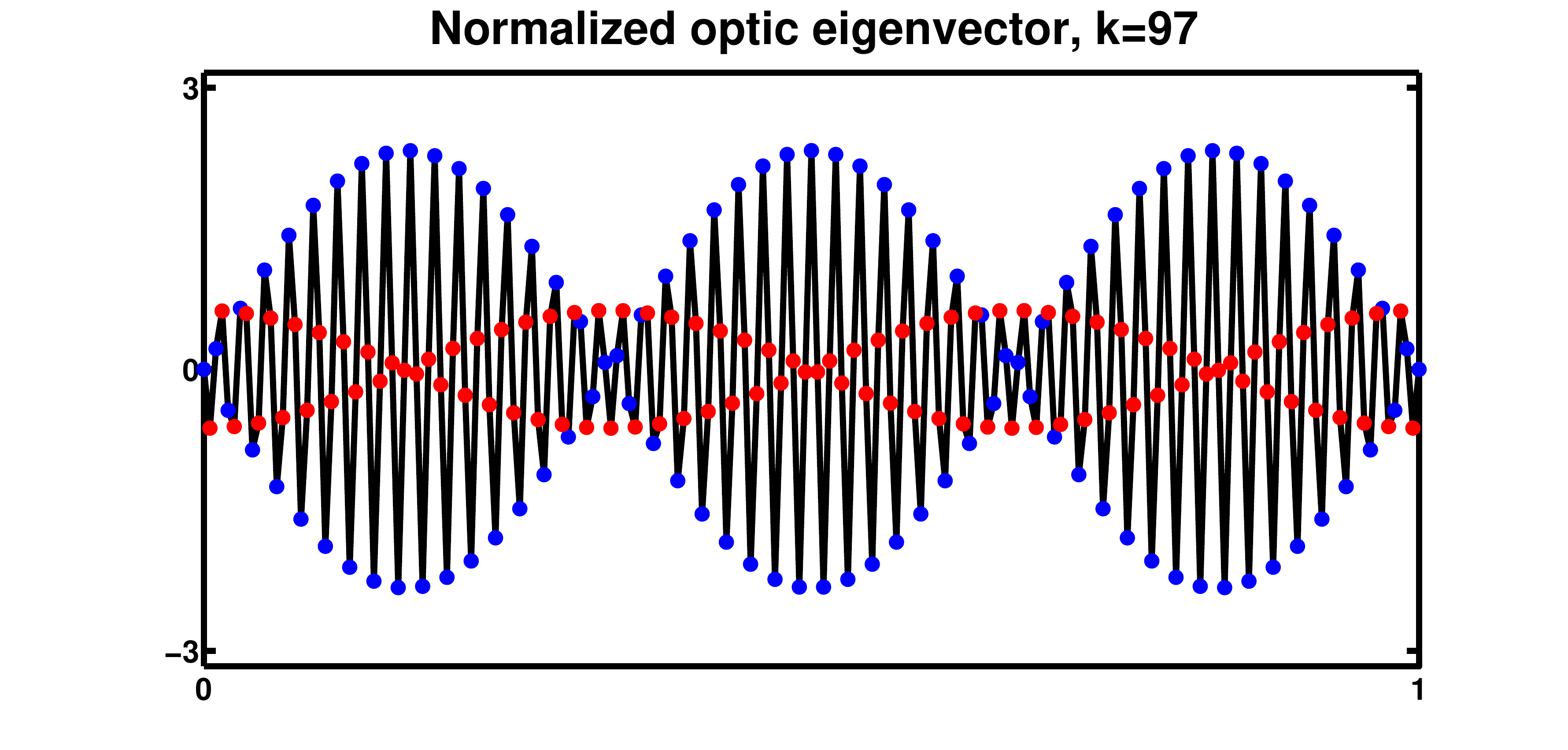}\end{center}
  \caption{Examples of normalized acoustic, optic and resonant eigenvectors for $N=99$. In blue/red, we represent their nodal/midpoint components.}\label{figeigenvectors}
\end{figure}

Using the expression (\ref{discretenorms}) of the discrete norm $||\cdot||_{h,0}$ and $||\cdot||_{h,1}$ and the identity (\ref{p2spectral3}), we obtain
the following representations of the $||\cdot||_{h,0}$ and $||\cdot||_{h,1}$-norms of the acoustic and optic eigenvectors
in terms of their nodal components, for all $\alpha\in\{a,0\}$ and $1\leq k\leq N$:
\begin{equation}||\mathbf{\tilde{\varphi}}^{\alpha,k}_h||_{h,0}^2=\frac{1}{24}\Big[1+\frac{500}{|10-\Lambda^{\alpha,k}|^2}\Big]h\sum\limits_{j=0}^N|\tilde{\varphi}_{j+1}^{\alpha,k}+\tilde{\varphi}_j^{\alpha,k}|^2+
\frac{h}{12}\sum\limits_{j=0}^N|\tilde{\varphi}_{j+1}^{\alpha,k}-\tilde{\varphi}_j^{\alpha,k}|^2\label{h0normeig}\end{equation}
and
\begin{equation}||\mathbf{\tilde{\varphi}}^{\alpha,k}_h||_{h,1}^2=
h\sum\limits_{j=0}^N\Big|\frac{\tilde{\varphi}^{\alpha,k}_{j+1}-\tilde{\varphi}^{\alpha,k}_j}{h}\Big|^2
+\frac{1}{h^2}\frac{4}{3}\Big|\frac{5\Lambda^{\alpha,k}}{4(10-\Lambda^{\alpha,k})}\Big|^2h\sum\limits_{j=0}^N|\tilde{\varphi}_{j+1}^{\alpha,k}+
\tilde{\varphi}_j^{\alpha,k}|^2.\label{h1normeig}\end{equation}

Then, using the representation formula (\ref{h0normeig}), the identities
\begin{equation}\begin{array}{c}h\sum\limits_{j=0}^N\sin^2(k\pi x_j)=\frac{1}{2},\quad
h\sum\limits_{j=0}^N|\sin(k\pi x_{j+1})-\sin(k\pi x_j)|^2=2\sin^2\big(\frac{k\pi h}{2}\big),\\
h\sum\limits_{j=0}^N|\sin(k\pi x_{j+1})+\sin(k\pi x_j)|^2=2\cos^2\big(\frac{k\pi h}{2}\big), \forall 1\leq k\leq N,
                \end{array}\label{TrigonometricIdentities}\end{equation}
and (\ref{p2spectral5}), we obtain
\begin{equation}||\mathbf{\tilde{\varphi}}^{\alpha,k}_h||_{h,0}^2=\frac{1}{3\tilde{W}(\Lambda^{\alpha,k})}, \mbox{ with }\tilde{W}(\Lambda)=
\frac{(\Lambda-10)(\Lambda^2+16\Lambda+240)}{19\Lambda^2+120\Lambda-3600}.\label{normh0}\end{equation}
Let us remark that $||\mathbf{\tilde{\varphi}}^{a,k}_h||_{h,0}$ blows-up as $kh\to1$. With the above notation,
\begin{equation}\varphi_j^{\alpha,k}=n^{\alpha,k}\sin(k\pi x_j)\mbox{ and }\varphi_{j+1/2}^{\alpha,k}=
m^{\alpha,k}\sin(k\pi x_{j+1/2}),\ \forall\alpha\in\{a,o\},\label{p2eignormalized}\end{equation} where
$$n^{\alpha,k}=\sqrt{3\tilde{W}(\Lambda^{\alpha,k})},
\quad m^{\alpha,k}=n^{\alpha,k}\frac{40+\Lambda^{\alpha,k}}{4(10-\Lambda^{\alpha,k})}\cos\big(\frac{k\pi h}{2}\big),$$
$n$ and $m$ standing for the \textit{nodal} and \textit{midpoint} components.

Using the explicit form of the $||\cdot||_{h,0}$ - norm and the characterization of the resonant mode (\ref{p2resonanteigenvector}),
we obtain that $||\mathbf{\tilde{\varphi}}^{r}_h||_{h,0}^2=8/15$ and therefore, the normalized resonant mode
$\mathbf{\varphi}^{r}$ satisfies
\begin{equation}\varphi_j^{r}=0,\ \forall 0\leq j\leq N+1 \mbox{ and } \varphi_{j+1/2}^r=(-1)^j\sqrt{15}/2\sqrt{2}, \ \forall 0\leq j\leq N.\label{p2eignormalizedresonant}\end{equation}

Let us introduce the sets of eigenvalues and of $\mathcal{H}_h^0$-normalized eigenfunctions, i.e.
\begin{equation}\mathcal{EV}_h:=\{\Lambda^{\alpha,k}_h,\alpha\in\{a,o\},1\leq k\leq N,\Lambda^r_h\} \mbox{ and }
\mathcal{EF}_h:=\{\mathbf{\varphi}^{\alpha,k}_h,\alpha\in\{a,o\},1\leq k\leq N,\mathbf{\varphi}^r_h\}.\label{p2SetEigenvectors}\end{equation}

\textbf{Fourier representation of discrete solutions.} Since $\mathcal{EF}_h$
is an orthonormal basis in $\mathcal{H}_h^0$, the initial data in (\ref{p2adjoint}) admit the following Fourier representation:
\begin{equation}\mathbf{U}_h^i=\sum\limits_{k=1}^N\widehat{u}^{a,k,i}\mathbf{\varphi}^{a,k}_h
+\widehat{u}^{r,i}\mathbf{\varphi}^r_h
+\sum\limits_{k=1}^N\widehat{u}^{o,k,i}\mathbf{\varphi}^{o,k}_h,\mbox{ where }
\widehat{u}^{\alpha,k,i}=(\mathbf{U}_h^i,\mathbf{\varphi}^{\alpha,k}_h)_{h,0} \mbox{ and }
\widehat{u}^{r,i}=(\mathbf{U}_h^i,\mathbf{\varphi}^r_h)_{h,0},\label{FourierRepresentationData}\end{equation}
with $\alpha\in\{a,o\}$, $1\leq k\leq N$ and $i=0,1$. Correspondingly, the solution of (\ref{p2adjoint}) can be represented as follows:
\begin{equation}\mathbf{U}_h(t)=\sum\limits_{\pm}\Big[\sum\limits_{k=1}^N\widehat{u}^{a,k}_{\pm}\exp(\pm it\lambda_h^{a,k})
\mathbf{\varphi}^{a,k}_h+
\widehat{u}^r_{\pm}\exp(\pm it\lambda_h^r)\mathbf{\varphi}^r_h+
\sum\limits_{k=1}^N\widehat{u}^{o,k}_{\pm}\exp(\pm it\lambda_h^{o,k})\mathbf{\varphi}^{o,k}_h\Big],
\label{p2adjointSol}\end{equation}
where $$\widehat{u}^{\alpha,k}_{\pm}=\frac{1}{2}\Big(\widehat{u}^{\alpha,k,0}\pm\frac{\widehat{u}^{\alpha,k,1}}{i\lambda_h^{\alpha,k}}\Big),
\forall\alpha\in\{a,o\},\ 1\leq k\leq N,\quad \widehat{u}^r_{\pm}=\frac{1}{2}\Big(\widehat{u}^{r,0}\pm\frac{\widehat{u}^{r,1}}{i\lambda_h^r}\Big).$$

The total energy (\ref{p2energy}) of the solutions of (\ref{p2adjoint}) is then as follows:
\begin{equation}\mathcal{E}_h(\mathbf{U}_h^0,\mathbf{U}_h^1)=
\sum\limits_{k=1}^N\Lambda_h^{a,k}(|\widehat{u}^{a,k}_+|^2+|\widehat{u}^{a,k}_-|^2)+\Lambda_h^r(|\widehat{u}^r_+|^2+|\widehat{u}^r_-|^2)
+\sum\limits_{k=1}^N\Lambda_h^{o,k}(|\widehat{u}^{o,k}_+|^2+|\widehat{u}^{o,k}_-|^2).\label{p2energyFourier}\end{equation}

\textbf{Limits and monotonicity of the eigenvalues.} Firstly, let us remark that as $kh\to1$, $\Lambda^{a,k}\to 10$, $\Lambda^{o,k}\to 12$ and as $kh\to 0$, $\Lambda^{o,k}\to 60$.
On the other hand, the so-called \textit{group velocities}, which are first-order derivatives of the Fourier symbols (\ref{FourierSymbols})
or of the corresponding dispersion relations (\ref{DispersionRelations}), verify the following positivity condition
$$\partial_{\eta}\lambda^{a}(\eta),\ \partial_{\eta}\Lambda^{a}(\eta),\ -\partial_{\eta}\lambda^{o}(\eta),\ -\partial_{\eta}\Lambda^{o}(\eta)>0,\ \forall\eta\in(0,\pi),$$
which means that \textit{the acoustic branch is strictly increasing} and \textit{the optic one is strictly decreasing} in $k$. 
Consequently, the high frequency wave packets involving only the acoustic or the optic modes and concentrated around a given frequency $1\leq k^{\star}\leq N$ 
propagate in opposite directions. Moreover, at $\eta=0$ or
$\eta=\pi$, the group velocities satisfy
$$\partial_{\eta}\Lambda^{a}(\pi)=\partial_{\eta}\Lambda^{o}(\pi)=\partial_{\eta}\Lambda^{a}(0)=\partial_{\eta}\Lambda^{o}(0)=0,\quad\partial_{\eta}\lambda^{a}(\pi)=
\partial_{\eta}\lambda^{o}(\pi)=\partial_{\eta}\lambda^{o}(0)=0\quad\mbox{and}\quad\partial_{\eta}\lambda^{a}(0)=1,$$
which, according to the analysis in \cite{MarZuaDG}, shows, in particular, that there are waves concentrated on each mode which propagate at arbitrarily slow velocity.

\section{Boundary observability of eigenvectors}\label{SectBoundObsEig}
The main result of this section is as follows:
\begin{proposition} For all $\alpha\in\{a,o\}$ and all $1\leq k\leq N$, the following identity holds for both acoustic and optic eigensolutions:
\begin{equation}||\mathbf{\varphi}^{\alpha,k}_h||_{h,1}^2=\frac{1}{W(\Lambda^{\alpha,k})}
\Big|\frac{\varphi_N^{\alpha,k}}{h}\Big|^2, \mbox{ with }
W(\Lambda)=\frac{24(\Lambda-10)^2(\Lambda-12)(\Lambda-60)}{(-19\Lambda^2-120\Lambda+3600)(\Lambda^2+16\Lambda+240)}.
\label{ObservabilityEigenvectorsao}\end{equation}
Moreover, for the resonant mode, the following identity holds:
\begin{equation}||\mathbf{\varphi}^{r}_h||_{h,1}^2=\frac{16}{3}\Big|\frac{\varphi_{N+1/2}^{r}}{h}\Big|^2.\label{ObservabilityEigenvectorsr}\end{equation}
\label{PropObservabilityEigenvectors}
\end{proposition}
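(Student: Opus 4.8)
The plan is to prove the two identities separately, in each case reducing the claim to the already-established norm representations combined with the dispersion relation (\ref{p2spectral5}).

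For the acoustic and optic modes, I would begin from the representation (\ref{h1normeig}) of $||\mathbf{\tilde{\varphi}}^{\alpha,k}_h||_{h,1}^2$, insert the nodal values $\tilde{\varphi}^{\alpha,k}_j=\sin(k\pi x_j)$, and use the trigonometric identities (\ref{TrigonometricIdentities}) to collapse the two sums into $2\sin^2(k\pi h/2)$ and $2\cos^2(k\pi h/2)$. Since $\mathbf{\varphi}^{\alpha,k}_h=\mathbf{\tilde{\varphi}}^{\alpha,k}_h/||\mathbf{\tilde{\varphi}}^{\alpha,k}_h||_{h,0}$, dividing by $||\mathbf{\tilde{\varphi}}^{\alpha,k}_h||_{h,0}^2=1/(3\tilde{W}(\Lambda^{\alpha,k}))$ from (\ref{normh0}) gives $||\mathbf{\varphi}^{\alpha,k}_h||_{h,1}^2$. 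On the right-hand side, since $x_N=1-h$ forces $\sin^2(k\pi x_N)=\sin^2(k\pi h)$, I would write $|\varphi^{\alpha,k}_N|^2=|n^{\alpha,k}|^2\sin^2(k\pi h)=3\tilde{W}(\Lambda^{\alpha,k})\sin^2(k\pi h)$. The common factor $3\tilde{W}(\Lambda^{\alpha,k})$ then cancels, and, using $\sin^2(k\pi h)=4\sin^2(k\pi h/2)\cos^2(k\pi h/2)$, the whole proposition reduces to a single rational identity for $W(\Lambda^{\alpha,k})$ in terms of $\sin^2(k\pi h/2)$ and $\cos^2(k\pi h/2)$.

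The decisive step is to eliminate $k$ using (\ref{p2spectral5}): writing $\cos(k\pi h)=w(\Lambda)$ and applying the half-angle formulas, one computes $\sin^2(k\pi h/2)=\Lambda(60-\Lambda)/(\Lambda^2+16\Lambda+240)$ and $\cos^2(k\pi h/2)=2(\Lambda-10)(\Lambda-12)/(\Lambda^2+16\Lambda+240)$, where the factorization $\Lambda^2-22\Lambda+120=(\Lambda-10)(\Lambda-12)$ enters. Substituting these and clearing the denominator, the whole computation hinges on the algebraic collapse $6(60-\Lambda)(\Lambda-10)+25\Lambda(\Lambda-12)=19\Lambda^2+120\Lambda-3600$, which produces precisely the quartic denominator factor $-19\Lambda^2-120\Lambda+3600$ (up to an overall sign) and the numerator $24(\Lambda-10)^2(\Lambda-12)(\Lambda-60)$ of the claimed $W$. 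This bookkeeping is where I expect the main effort to lie; all the surrounding manipulations are substitutions into results already proved in the excerpt.

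For the resonant mode I would argue directly from the nodal form (\ref{discretenorms}) of $||\cdot||_{h,1}$ and the explicit eigenvector (\ref{p2eignormalizedresonant}). All nodal components of $\mathbf{\varphi}^r_h$ vanish, so each midpoint difference $\partial_h\varphi^r_{j+1/2}=(\varphi^r_{j+1}-\varphi^r_j)/h$ is zero, whereas the forward and backward derivatives reduce to $\partial_h^+\varphi^r_j=4\varphi^r_{j+1/2}/h$ and $\partial_h^-\varphi^r_{j+1}=-4\varphi^r_{j+1/2}/h$, each contributing $16|\varphi^r_{j+1/2}|^2/h^2$. Since $|\varphi^r_{j+1/2}|$ is independent of $j$, summing the $N+1$ identical terms and using $N+1=1/h$ gives $||\mathbf{\varphi}^r_h||_{h,1}^2=\frac{h}{6}(N+1)\frac{32}{h^2}|\varphi^r_{N+1/2}|^2=\frac{16}{3}|\varphi^r_{N+1/2}/h|^2$, which is exactly (\ref{ObservabilityEigenvectorsr}).
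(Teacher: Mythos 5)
Your proposal is correct and coincides with the paper's own second, ``direct'' proof of (\ref{ObservabilityEigenvectorsao}): both use the representation (\ref{h1normeig}), the trigonometric identities (\ref{TrigonometricIdentities}), the fact that $|\tilde{\varphi}_N^{\alpha,k}|=|\sin(k\pi h)|$, and the relation (\ref{p2spectral5}) to reduce everything to the rational identity in $\Lambda$ whose key collapse is $6(60-\Lambda)(\Lambda-10)+25\Lambda(\Lambda-12)=19\Lambda^2+120\Lambda-3600$ (the paper additionally records a first proof via the discrete multiplier $x_j(\tilde{\varphi}_{j+1}-\tilde{\varphi}_{j-1})/2h$ and Abel summation, which you do not need). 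Your treatment of the resonant mode is exactly the computation the paper sketches, namely combining (\ref{p2resonanteigenvector}) with (\ref{discretenorms}).
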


\begin{remark}The identity (\ref{ObservabilityEigenvectorsao}) is the discrete analogue of the continuous one $||\varphi^k||_{H_0^1}^2=|\varphi_x^k(1)|^2/2$, where $\varphi^k(x)=\sqrt{2}\sin(k\pi x)$ is the $L^2$-normalized eigenfunction corresponding to the eigenvalue $\Lambda^k=k^2\pi^2$.\label{remark0}\end{remark}

\begin{remark}Due to the monotonicity of the Fourier symbols, we have that $\Lambda^{a,k}\in(0,10)$ and
$\Lambda^{o,k}\in(12,60)$, for all $1\leq k\leq N$. The quadratic equation $-19x^2-120x+3600=0$ has the roots $x_1=-60(1+\sqrt{20})/19<0$ and
$x_2=60/(1+\sqrt{20})\in(10,12)$. This allows us to guarantee that $W(\Lambda)>0$, for all $\Lambda\in(0,10)\cup(12,60)$.\label{remark1}\end{remark}

\begin{remark}Due to the form of the denominator in the right hand side of (\ref{ObservabilityEigenvectorsao}) and from the above lower and upper bound of the Fourier symbols and the behavior
of the group velocities, we deduce that the coefficient of $|\varphi_N^{\alpha,k}/h|^2$ in the right hand side of (\ref{ObservabilityEigenvectorsao})
is singular as  $kh\to 1$ both when $\alpha=a$ or $\alpha=o$ and when  $kh\to0$ and $\alpha=o$.
\label{remark2}\end{remark}

\begin{proof}[Proof of Proposition \ref{PropObservabilityEigenvectors}] Fix $\alpha\in\{a,o\}$ and $1\leq k\leq N$. Obviously,
it is enough to prove (\ref{ObservabilityEigenvectorsao}) for the un-normalized eigenvectors $\mathbf{\tilde{\varphi}}^{\alpha,k}$.
We will use two approaches to prove the identity (\ref{ObservabilityEigenvectorsao}). The first one consists on using the classical multiplier $x_j(\tilde{\varphi}_{j+1}-\tilde{\varphi}_{j-1})/2h$ (which is a discrete version of the continuous one $x\varphi_x$)
in the simplified spectral problem (\ref{p2spectral6}) and then to apply
the \textit{Abel summation by parts formula}
\begin{equation}\sum\limits_{j=1}^N(a_{j+1}-a_j)b_j=a_{N+1}b_{N+1}-a_1b_0-\sum\limits_{j=0}^Na_{j+1}(b_{j+1}-b_j),\label{AbelFormula}\end{equation}
for all $(a_j)_{1\leq j\leq N+1}\in \cc^{N+1}$ and
$(b_j)_{0\leq j\leq N+1}\in\cc^{N+2}$. In what follows, we will add the superscript $\alpha,k$ to the solution $\mathbf{\tilde{\varphi}}_h$ of (\ref{p2spectral4}). In this way, we deduce the following identity:
$$\frac{h}{2}\sum\limits_{j=0}^N\big|\tilde{\varphi}_{j+1}^{\alpha,k}\pm\tilde{\varphi}_j^{\alpha,k}\big|^2=\frac{1}{2}\big|\tilde{\varphi}_N^{\alpha,k}\big|^2
+(w(\Lambda^{\alpha,k})\pm1)
h\sum\limits_{j=0}^N\tilde{\varphi}_{j+1}^{\alpha,k}\tilde{\varphi}_j^{\alpha,k}.$$
Replacing the crossed sum $h\sum_{j=0}^N\tilde{\varphi}_{j+1}^{\alpha,k}\tilde{\varphi}_j^{\alpha,k}$ obtained from the identity with $+$ into the one with
$-$, we get the following equality:
\begin{equation}\label{ObsEig}h\sum\limits_{j=0}^N\Big|\frac{\tilde{\varphi}_{j+1}^{\alpha,k}-\tilde{\varphi}_j^{\alpha,k}}{h}\Big|^2=\frac{2}{w(\Lambda^{\alpha,k})+1}
\Big|\frac{\tilde{\varphi}_N^{\alpha,k}}{h}\Big|^2+\frac{1}{h^2}\frac{w(\Lambda^{\alpha,k})-1}{w(\Lambda^{\alpha,k})+1}
h\sum\limits_{j=0}^N\big|\tilde{\varphi}_{j+1}^{\alpha,k}+\tilde{\varphi}_j^{\alpha,k}\big|^2.\end{equation}

Using the representation (\ref{h1normeig}) of the $||\cdot||_{h,1}$-norm of the optic and acoustic eigenvectors in terms of the nodal components,
we obtain that (\ref{ObsEig}) is equivalent to
\begin{equation}\label{ObsEig1}||\mathbf{\tilde{\varphi}}^{\alpha,k}_h||_{h,1}^2=
\frac{2}{w(\Lambda^{\alpha,k})+1}\Big|\frac{\tilde{\varphi}_N^{\alpha,k}}{h}\Big|^2
+\frac{1}{h^2}\Big(\frac{w(\Lambda^{\alpha,k})-1}{w(\Lambda^{\alpha,k})+1}+\frac{4}{3}\Big|\frac{5\Lambda^{\alpha,k}}{4(10-\Lambda^{\alpha,k})}\Big|^2\Big)
h\sum\limits_{j=0}^N\big|\tilde{\varphi}_{j+1}^{\alpha,k}+\tilde{\varphi}_j^{\alpha,k}\big|^2.\end{equation}

Replacing the representation of the $||\cdot||_{h,0}$-norm of the eigenvectors (\ref{h0normeig}) into the one of the
$||\cdot||_{h,1}$-norm, (\ref{h1normeig}), we obtain:
\begin{equation}||\mathbf{\tilde{\varphi}}^{\alpha,k}_h||_{h,1}^2=\frac{12}{h^2}||\mathbf{\tilde{\varphi}}^{\alpha,k}||_{h,0}^2
+\frac{1}{h^2}\frac{25\Lambda^{\alpha,k}(\Lambda^{\alpha,k}-12)-6(\Lambda^{\alpha,k}-10)(\Lambda^{\alpha,k}-60)}{12(\Lambda^{\alpha,k}-10)^2}
h\sum\limits_{j=0}^{N}\big|\tilde{\varphi}_{j+1}^{\alpha,k}+\tilde{\varphi}_j^{\alpha,k}\big|^2.\label{ObsEig2}\end{equation}
On the other hand, the $||\cdot||_{h,0}$ and $||\cdot||_{h,1}$-norms of the eigenvetors are related as follows:
\begin{equation}||\mathbf{\tilde{\varphi}}^{\alpha,k}_h||_{h,1}^2=
\frac{\Lambda^{\alpha,k}}{h^2}||\mathbf{\tilde{\varphi}}^{\alpha,k}_h||_{h,0}^2.\label{h0h1normeig}\end{equation}
Replacing (\ref{h0h1normeig}) into (\ref{ObsEig2}), we get
\begin{equation}h\sum\limits_{j=0}^N\big|\tilde{\varphi}_{j+1}^{\alpha,k}+\tilde{\varphi}_j^{\alpha,k}\big|^2
=\frac{12(\Lambda^{\alpha,k}-10)^2(\Lambda^{\alpha,k}-12)}{\Lambda^{\alpha,k}[-6(\Lambda^{\alpha,k}-60)(\Lambda^{\alpha,k}-10)+25\Lambda^{\alpha,k}(\Lambda^{\alpha,k}-12)]}
h^2||\mathbf{\tilde{\varphi}}^{\alpha,k}_h||_{h,1}^2.\label{ObsEig3}\end{equation}
By combining (\ref{ObsEig1}) and (\ref{ObsEig3}), we obtain
$$\Big[1-\frac{6(\Lambda^{\alpha,k}-60)(\Lambda^{\alpha,k}-10)+25\Lambda^{\alpha,k}(\Lambda^{\alpha,k}-12)}{-6(\Lambda^{\alpha,k}-60)(\Lambda^{\alpha,k}-10)+25\Lambda^{\alpha,k}(\Lambda^{\alpha,k}-12)}\Big]
||\mathbf{\tilde{\varphi}}^{\alpha,k}_h||_{h,1}^2=\frac{(\Lambda^{\alpha,k})^2+16\Lambda^{\alpha,k}+240}{2(\Lambda^{\alpha,k}-10)(\Lambda^{\alpha,k}-12)}\Big|\frac{\tilde{\varphi}_N^{\alpha,k}}{h}\Big|^2,$$
from which the identity (\ref{ObservabilityEigenvectorsao}) follows immediately.

The second approach to prove (\ref{ObservabilityEigenvectorsao}) is much more direct. It consists in using the representation (\ref{h1normeig}) of the $||\cdot||_{h,1}$-norm
of the eigenvectors, the trigonometric identities (\ref{TrigonometricIdentities}), the fact that $|\tilde{\varphi}_N^{\alpha,k}|=|\sin(k\pi h)|$ and the relation (\ref{p2spectral5}). Thus, for $W$ as in (\ref{ObservabilityEigenvectorsao}), we get:
$$\frac{||\mathbf{\tilde{\varphi}}^{\alpha,k}_h||_{h,1}^2}{\big|\frac{\tilde{\varphi}_N^{\alpha,k}}{h}\big|^2}
=\frac{1-w(\Lambda^{\alpha,k})+\frac{4}{3}\Big|\frac{5\Lambda^{\alpha,k}}{4(10-\Lambda^{\alpha,k})}\Big|^2(1+
w(\Lambda^{\alpha,k}))}
{(1-w(\Lambda^{\alpha,k}))(1+w(\Lambda^{\alpha,k}))}=\frac{1}{W(\Lambda^{\alpha,k})}.$$

The identity (\ref{ObservabilityEigenvectorsr}) follows by combining the explicit expressions of the components of the resonant eigenvector  (\ref{p2resonanteigenvector}) and  (\ref{discretenorms}). This concludes the proof of (\ref{ObservabilityEigenvectorsr}).
\end{proof}

\section{Discrete observability inequality: an Ingham approach}\label{SectIngham}
\textbf{The observability inequality.} In this section we prove that the discrete observability inequality (\ref{p2ObservabilityInequalityGeneral})
holds uniformly as $h\to 0$ in a truncated class of initial data for the observation operator $B_h$ introduced in (\ref{ObservationOperators}). More precisely, consider $0<\Lambda_+^a<10$ and $12<\Lambda_-^o\leq \Lambda_+^o<60$ and
correspondingly the wave numbers
\begin{equation}\eta_+^a:=(\Lambda^a)^{-1}(\Lambda^a_+) \mbox{ and }\eta^o_{\pm}:=(\Lambda^o)^{-1}(\Lambda^o_{\pm})\label{wavenumbers}\end{equation}
and introduce the subspace of $\cc^{2N+1}$ given by
$$\mathcal{T}_{h,\eta^a_+,\eta^o_-,\eta^o_+}:=\mbox{span}\{\mathbf{\varphi}^{a,k},k\pi h\leq \eta^a_+\}\oplus
\mbox{span}\{\mathbf{\varphi}^{o,k},\eta_+^o\leq k\pi h\leq \eta_-^o\}.$$
Consider the truncated subspace $\mathcal{S}_h\subset\mathcal{V}_h$ defined by (see Figure \ref{figp2TruncatedClass}):
\begin{equation}\mathcal{S}_h:=(\mathcal{T}_{h,\eta^a_+,\eta^o_-,\eta^o_+}\times \mathcal{T}_{h,\eta^a_+,\eta^o_-,\eta^o_+})\cap\mathcal{V}_h.\label{p2SubspTruncation}\end{equation}
\begin{figure}\begin{center}\includegraphics[width=9cm,height=4.5cm]{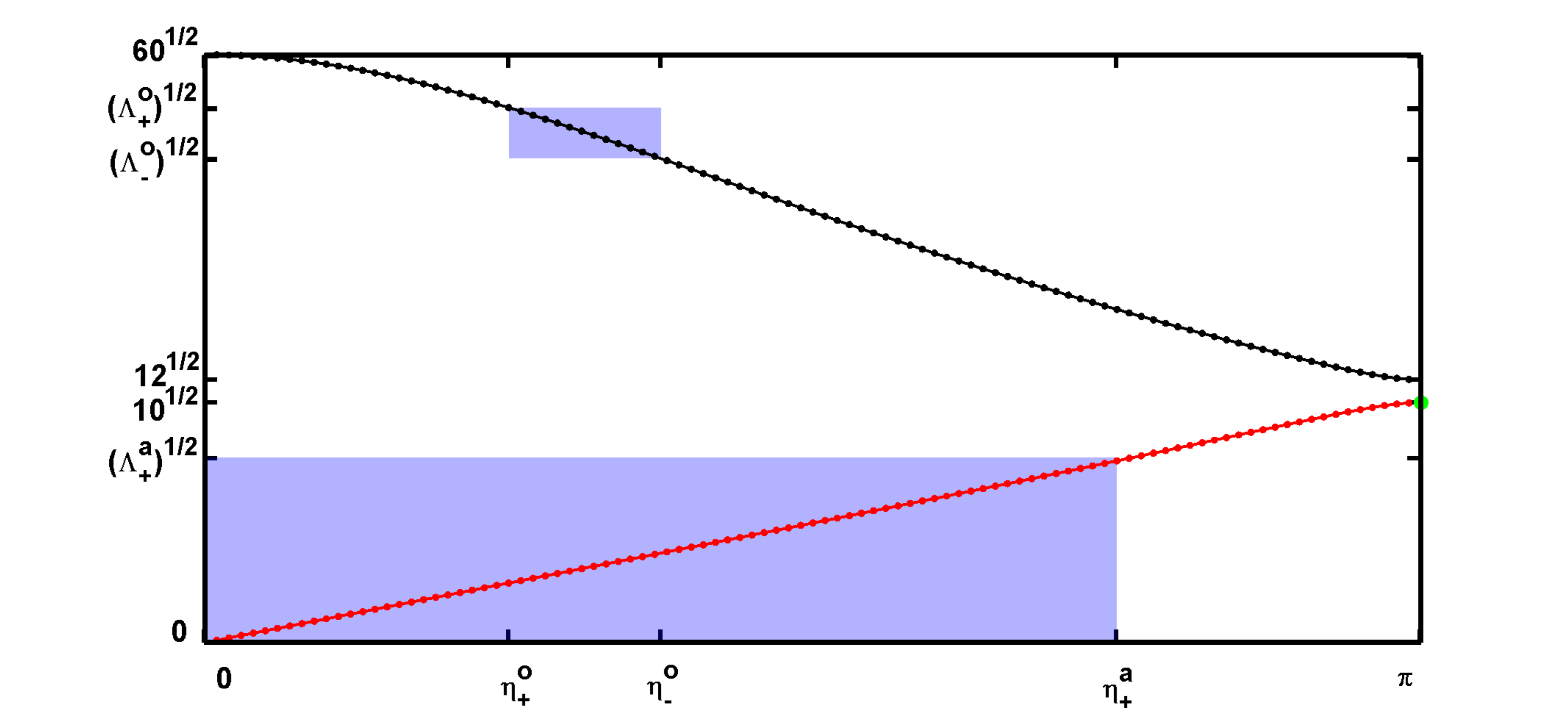}\end{center}
\caption{The selected area contains square roots of eigenvalues whose eigenvectors generate the truncated class $\mathcal{T}_{h,\eta_+^{a},\eta_-^{o},\eta_+^{o}}$. In red/black/green, the acoustic/optic/resonant mode. }\label{figp2TruncatedClass}\end{figure}
The main result of this subsection is as follows:
\begin{theorem}For all $\Lambda_+^a\in(0,10)$ and $\Lambda_-^o\leq\Lambda_+^o\in(12,60)$ independent of $h$, all initial data
$(\mathbf{U}^{h,0},\mathbf{U}^{h,1})\in\mathcal{S}_h$ introduced in (\ref{p2SubspTruncation}), with $\eta_+^a$, $\eta_{\pm}^o$ given by (\ref{wavenumbers}), and all
$$T>T^{\star}_{\eta_+^a,\eta_-^o,\eta_-^a}=\frac{2}{\min\{\min\limits_{\eta\in[0,\eta_+^a]}\partial_{\eta}\lambda^a,\min\limits_{\eta\in[\eta_+^o,\eta_-^o]}
(-\partial_{\eta}\lambda^o(\eta))\}}$$\label{TheoremFourierTruncation}
the observability inequality (\ref{p2ObservabilityInequalityGeneral}) holds uniformly as $h\to 0$ for the operator $B_h$ in (\ref{ObservationOperators}). \end{theorem}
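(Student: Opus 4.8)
The plan is to reduce the observation term to a single scalar non-harmonic Fourier series in time and then to invoke the classical Ingham inequality, the uniformity in $h$ being guaranteed by a spectral gap that stays bounded below precisely because the truncation $\mathcal{S}_h$ removes the three regions of vanishing group velocity. First I would make the right-hand side of (\ref{p2ObservabilityInequalityGeneral}) explicit. Since the matrix $B_h$ in (\ref{ObservationOperators}) has only the entry $(2N+1,2N)=-1/h$, its action produces a vector whose unique non-trivial component is $-U_N(t)/h$, so that $\|B_h\mathbf{U}_h(t)\|_{\cc^{2N+1}}^2=|U_N(t)/h|^2$, where $U_N(t)$ is the $N$-th nodal component of the solution. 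Using the Fourier representation (\ref{p2adjointSol}) and the fact that data in $\mathcal{S}_h$ carry no resonant component and involve only the truncated acoustic and optic modes, I would write
\[\frac{U_N(t)}{h}=\sum_{\pm}\Big[\sum_{k\pi h\le\eta^a_+}\widehat{u}^{a,k}_{\pm}\frac{\varphi_N^{a,k}}{h}\,e^{\pm it\lambda_h^{a,k}}+\sum_{\eta^o_+\le k\pi h\le\eta^o_-}\widehat{u}^{o,k}_{\pm}\frac{\varphi_N^{o,k}}{h}\,e^{\pm it\lambda_h^{o,k}}\Big],\]
a trigonometric polynomial with real frequencies $\pm\lambda_h^{\alpha,k}$ and coefficients $\widehat{u}^{\alpha,k}_{\pm}\varphi_N^{\alpha,k}/h$.

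The crux of the argument, and the step I expect to be the main obstacle, is establishing an $h$-uniform lower bound on the spectral gap of the frequency set $\{\pm\lambda_h^{\alpha,k}\}$ appearing above. Writing $\lambda_h^{\alpha,k}=\lambda^{\alpha}(k\pi h)/h$ with $\lambda^\alpha$ as in (\ref{DispersionRelations}), the mean value theorem gives, for two consecutive kept modes on the acoustic branch, $\lambda_h^{a,k+1}-\lambda_h^{a,k}=\pi\,\partial_{\eta}\lambda^a(\xi)$ for some $\xi$ near $(k\pi h,(k+1)\pi h)$, and analogously $\lambda_h^{o,k}-\lambda_h^{o,k+1}=\pi(-\partial_{\eta}\lambda^o(\xi))$ on the (decreasing) optic branch. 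Since the group velocities $\partial_\eta\lambda^a$ and $-\partial_\eta\lambda^o$ are continuous and strictly positive on the closed truncated intervals $[0,\eta^a_+]$ and $[\eta^o_+,\eta^o_-]$ — the degeneracies occurring only at $\eta=0,\pi$ on the optic branch and at $\eta=\pi$ on the acoustic one, all excluded by the choice of $\mathcal{S}_h$ — the within-branch gaps are bounded below, as $h\to0$, by $\pi\min_{[0,\eta^a_+]}\partial_{\eta}\lambda^a$ and $\pi\min_{[\eta^o_+,\eta^o_-]}(-\partial_{\eta}\lambda^o)$ respectively. The cross-branch separation is of order $1/h$, since $\lambda^a<\sqrt{10}<\sqrt{12}\le\lambda^o$ on the kept ranges, and the $\pm$ separations are at least $2\lambda_h^{a,1}\to2\pi$; hence the global gap $\gamma_h$ is governed by the within-branch gaps and satisfies $\liminf_{h\to0}\gamma_h\ge\gamma_\star:=\pi\min\{\min_{[0,\eta^a_+]}\partial_{\eta}\lambda^a,\min_{[\eta^o_+,\eta^o_-]}(-\partial_{\eta}\lambda^o)\}$. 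The delicate point here is controlling the MVT argument uniformly near the endpoints of the truncation for small $h$; the strict inequality $T>T^\star=2\pi/\gamma_\star=2/\min\{\cdots\}$ in the statement absorbs the vanishing margins.

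With the gap secured, the classical Ingham inequality applies to the trigonometric polynomial $U_N(t)/h$ for every $T>2\pi/\gamma_h$, which holds for all small $h$ once $T>T^\star$, and yields a lower bound
\[\int_0^T\Big|\frac{U_N(t)}{h}\Big|^2\,dt\ge C(T,\gamma_\star)\sum_{\alpha,k,\pm}\big|\widehat{u}^{\alpha,k}_{\pm}\big|^2\Big|\frac{\varphi_N^{\alpha,k}}{h}\Big|^2,\]
with $C(T,\gamma_\star)>0$ independent of $h$. It then remains to convert the weights $|\varphi_N^{\alpha,k}/h|^2$ into the spectral energy. Combining Proposition \ref{PropObservabilityEigenvectors}, namely (\ref{ObservabilityEigenvectorsao}), with the normalization $\|\varphi^{\alpha,k}_h\|_{h,0}=1$ and the relation (\ref{h0h1normeig}), which together give $\|\varphi^{\alpha,k}_h\|_{h,1}^2=\Lambda_h^{\alpha,k}$, I obtain $|\varphi_N^{\alpha,k}/h|^2=W(\Lambda^{\alpha,k})\,\Lambda_h^{\alpha,k}$. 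On the truncated range one has $\Lambda^{a,k}\in(0,\Lambda^a_+]$ and $\Lambda^{o,k}\in[\Lambda^o_-,\Lambda^o_+]$, and by Remarks \ref{remark1} and \ref{remark2} the function $W$ is continuous and strictly positive there, hence bounded below by some $W_{\min}>0$ independent of $h$. Substituting this into the Ingham bound and comparing with the Fourier expression (\ref{p2energyFourier}) of the energy gives
\[\int_0^T\|B_h\mathbf{U}_h(t)\|^2_{\cc^{2N+1}}\,dt\ge C(T,\gamma_\star)\,W_{\min}\,\mathcal{E}_h(\mathbf{U}_h^0,\mathbf{U}_h^1),\]
which is exactly (\ref{p2ObservabilityInequalityGeneral}) with an $h$-independent observability constant, completing the proof. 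The only genuinely hard ingredient is the uniform gap estimate; once the degenerate frequencies are filtered out, everything else is a bookkeeping of the exact identities already established.
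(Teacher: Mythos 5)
Your proposal is correct and follows essentially the same route as the paper's own proof: reduce the observation term to the scalar series $U_N(t)/h$, establish an $h$-uniform spectral gap on the truncated acoustic and optic branches, apply the inverse Ingham inequality, and convert the weights $|\varphi_N^{\alpha,k}/h|^2$ into $W(\Lambda^{\alpha,k})\Lambda_h^{\alpha,k}$ via Proposition \ref{PropObservabilityEigenvectors} and the positivity of $W$ on $[0,\Lambda_+^a]\cup[\Lambda_-^o,\Lambda_+^o]$. In fact you are slightly more careful than the paper on one point it leaves implicit, namely that the cross-branch separation (of order $(\sqrt{\Lambda_-^o}-\sqrt{\Lambda_+^a})/h$) and the $\pm$ separations do not degrade the global gap needed for Ingham's theorem.
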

\begin{proof}[Proof of Theorem \ref{TheoremFourierTruncation}] The fact that the eigenmodes involved in the class
$\mathcal{T}_{h,\eta_+^a,\eta_-^o,\eta_+^o}$ are such that the gap in each branch has a strictly positive lower bound uniformly as $h\to 0$ allows us
to apply Ingham Theorem (cf. \cite{LorKom}, Theorem 4.3, pp. 59).  More precisely, the spectral gap on each branch is bounded as follows:
$$\lambda_h^{a,k+1}-\lambda_h^{a,k}\geq \pi\min\limits_{\eta\in[0,\eta_+^a]}\partial_{\eta}\lambda^a(\eta)>0,\ \forall k\pi h,(k+1)\pi h\in[0,\eta_+^a]$$
and
$$\lambda_h^{o,k}-\lambda_h^{o,k+1}\geq \pi\min\limits_{\eta\in[\eta_+^o,\eta_-^o]}(-\partial_{\eta}\lambda^{o}(\eta))>0,\
\forall k\pi h,(k+1)\pi h\in[\eta_+^o,\eta_-^o].$$

Then $\gamma=\gamma(\eta_+^a,\eta_-^o,\eta_+^o):=\pi\min\{\min\limits_{\eta\in[0,\eta_+^a]}\partial_{\eta}\lambda^a(\eta),
\min\limits_{\eta\in[\eta_+^o,\eta_-^o]}(-\partial_{\eta}\lambda^{o}(\eta))\}>0$ is the uniform gap needed to apply the Ingham theory.
From (\ref{p2adjointSol}) and the definition of the class $\mathcal{T}_{h,\eta_+^a,\eta_-^o,\eta_+^o}$, we have
$$U_N(t)=\sum\limits_{\pm}\Big[\sum\limits_{k\pi h\in[0,\eta_+^a]}\widehat{u}^{a,k}_{\pm}\exp(\pm it\lambda_h^{a,k})\varphi^{a,k}_N
+\sum\limits_{k\pi h\in[\eta_+^o,\eta_-^o]}\widehat{u}^{o,k}_{\pm}\exp(\pm it\lambda_h^{o,k})\varphi^{o,k}_N\Big].$$
By applying the\textit{ inverse inequality} in Ingham Theorem (cf. \cite{LorKom}, pp. 60, (4.9)), we can guarantee that for all $T>2\pi/\gamma=T^{\star}_{\eta_+^a,\eta_-^o,\eta_+^o}$, there exists
a constant $C_-(T)>0$ independent of $h$ such that
$$C_-(T)\sum\limits_{\pm}\Big[\sum\limits_{k\pi h\in[0,\eta_+^a]}|\widehat{u}^{a,k}_{\pm}|^2\Big|\frac{\varphi_N^{a,k}}{h}\Big|^2
+\sum\limits_{k\pi h\in[\eta_+^o,\eta_-^o]}|\widehat{u}^{o,k}_{\pm}|^2\Big|\frac{\varphi_N^{o,k}}{h}\Big|^2
\Big]\leq \int\limits_0^T\Big|\frac{U_N(t)}{h}\Big|^2\,dt.$$

Using the identities (\ref{ObservabilityEigenvectorsao}) and 
$||\mathbf{\varphi}^{\alpha,k}_h||_{h,1}^2=\Lambda_h^{\alpha,k}$, we obtain that (the function $W$ below is as in (\ref{ObservabilityEigenvectorsao})):
$$C_-(T)\sum\limits_{\pm}\Big[\sum\limits_{k\pi h\in[0,\eta_+^a]}\Lambda_h^{a,k}|\widehat{u}^{a,k}_{\pm}|^2W(\Lambda^{a,k})
+\sum\limits_{k\pi h\in[\eta_+^o,\eta_-^o]}\Lambda^{o,k}_h|\widehat{u}^{o,k}_{\pm}|^2W(\Lambda^{o,k})
\Big]\leq \int\limits_0^T\Big|\frac{U_N(t)}{h}\Big|^2\,dt.$$

Taking into account that for our filtering algorithm $\Lambda\in[0,\Lambda_+^a]\cup[\Lambda_-^o,\Lambda_+^o]\subset[0,10)\cup(12,60)$ (with uniform inclusion as $h\to 0$ due to the fact that
$\Lambda_+^a$ and $\Lambda_{\pm}^o$ do not depend on $h$),  and that for $\Lambda\in[0,10)\cup(12,60)$, the weight $W$ is strictly positive, we can guarantee that
$$C_-(\eta_+^a,\eta_-^o,\eta_+^o):=\min\limits_{\Lambda\in[0,\Lambda_+^a)\cup(\Lambda_-^o,\Lambda_+^o)}W(\Lambda)\mbox{ is strictly positive.}$$
Then the proof concludes by taking in (\ref{p2ObservabilityInequalityGeneral}) with $B_h$ given by (\ref{ObservationOperators}) the observability constant $C_h(T)=C_-(T)C_-(\eta_+^a,\eta_-^o,\eta_+^o)$ which is independent of $h$.
\end{proof}

\textbf{The admissibility inequality.} Using the \textit{direct inequality} in Ingham Theorem  (cf. \cite{LorKom}, pp. 60, (4.8)), we can also prove that the inequality (\ref{p2DirectInequalityGeneral}), with $B_h$ as in (\ref{ObservationOperators}), holds uniformly as $h\to 0$ for all $T>0$, with $1/c_h(T)=C_+(T)C_+(\eta_+^a,\eta_-^o,\eta_+^o)$, where $C_+(T)$ is the constant of the direct -Ingham inequality and
$$C_+(\eta_+^a,\eta_-^o,\eta_+^o):=\max\limits_{\Lambda\in[0,\Lambda_+^a)\cup(\Lambda_-^o,\Lambda_+^o)}W(\Lambda)\mbox{ is bounded.}$$

\section{Discrete observability inequality: a bi-grid algorithm} \label{SectBigrid}

\textbf{1. The observability inequality.} In this section, $N$ will be an \textit{odd} number. We consider the space $\mathcal{L}_h$ containing \textit{piecewise linear functions} given below 
$$\mathcal{L}_h:=\{\mathbf{F}^h=(F_{j/2})_{1\leq j\leq 2N+1}\mbox{ with }F_0=F_{N+1}=0,\mbox{ s.t. }F_{j+1/2}=(F_j+F_{j+1})/2,\
\forall 0\leq j\leq N\}$$
and the space $\mathcal{B}_h$ of the discrete functions whose \textit{nodal components} are given by a \textit{bi-grid algorithm}, i.e. the even components are arbitrarily given and the odd ones are computed as average of the two even neighboring values:
$$\mathcal{B}_h:=\{\mathbf{F}^h=(F_{j/2})_{1\leq j\leq 2N+1}\mbox{ with }F_0=F_{N+1}=0,\mbox{ s.t. }F_{2j+1}=(F_{2j}+F_{2j+2})/2,\ \forall 0\leq j\leq(N-1)/2\}.$$
We also define the subspace $\mathcal{S}_h$ of $\mathcal{V}_h$
\begin{equation}\mathcal{S}_h:=((\mathcal{L}_h\cap\mathcal{B}_h)\times(\mathcal{L}_h\cap\mathcal{B}_h))\cap\mathcal{V}_h.\label{p2SubsBigrid}\end{equation}

The aim of this section is to prove that the observability inequality (\ref{p2ObservabilityInequalityGeneral}) still holds uniformly as
$h\to 0$ for initial data in the bi-grid subspace $\mathcal{S}_h$ introduced in (\ref{p2SubsBigrid}):
\begin{theorem}\label{TheoremBigrid}For all $T> 2$ and all initial data $(\mathbf{U}_h^0,\mathbf{U}_h^1)$ in the adjoint problem (\ref{p2adjoint}) belonging to $\mathcal{S}_h$ introduced in (\ref{p2SubsBigrid}), the observability inequality (\ref{p2ObservabilityInequalityGeneral}) with $B_h$ given by  (\ref{ObservationOperators})
holds uniformly as $h\to 0$. \end{theorem}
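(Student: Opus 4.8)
The plan is to exploit the fact that membership in $\mathcal{S}_h$ drastically constrains the Fourier expansion (\ref{FourierRepresentationData}) of the data, collapsing the two-branch spectrum onto a single band of genuinely low acoustic frequencies on which the classical theory applies. First I would read off what the two filtering conditions impose on the eigen-coefficients. Expanding each datum $\mathbf{U}_h^i$ in the orthonormal basis $\mathcal{EF}_h$, the piecewise-linear requirement $F_{j+1/2}=(F_j+F_{j+1})/2$ forces, mode by mode in the nodal frequency $k$, the midpoint Fourier coefficient to equal $\cos(k\pi h/2)$ times the nodal one. Comparing this with the eigenvector structure (\ref{p2eigenvectors})--(\ref{p2spectral3}) kills the resonant component entirely and slaves the optic coefficient to the acoustic one at each $k$, the slaving being governed by the quantities $\frac{40+\Lambda^{\alpha,k}}{4(10-\Lambda^{\alpha,k})}-1=\frac{5\Lambda^{\alpha,k}}{4(10-\Lambda^{\alpha,k})}$, which vanish like $(k\pi h)^2$ on the acoustic branch as $k\pi h\to0$. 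Next, writing the bi-grid condition $F_{2j+1}=(F_{2j}+F_{2j+2})/2$ in the nodal sine basis and pairing the index $k$ with $N+1-k$ (so that $k\pi h\mapsto\pi-k\pi h$), one obtains the reflection identity $\widehat F_{N+1-k}^{\,\mathrm{nod}}=-\tan^{2}(k\pi h/2)\,\widehat F_{k}^{\,\mathrm{nod}}$ for $1\le k\le(N-1)/2$, together with the vanishing of the central mode $k=(N+1)/2$. Thus every admissible datum is parametrized by the nodal coefficients with $1\le k\le(N-1)/2$, i.e.\ by frequencies confined to $k\pi h\in(0,\pi/2)$.

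The second step is the energy concentration estimate. I would substitute the two relations above into the Fourier form (\ref{p2energyFourier}) of the energy and prove
\[
\mathcal{E}_h(\mathbf{U}_h^0,\mathbf{U}_h^1)\le C\sum_{1\le k\le(N-1)/2}\Lambda_h^{a,k}\big(|\widehat u^{a,k}_+|^2+|\widehat u^{a,k}_-|^2\big),
\]
with $C$ independent of $h$. Concretely, each of the three spurious contributions is absorbed into the low acoustic block: the slaved optic energy at frequency $k$ carries the factor $\big(5\Lambda^{a,k}/(4(10-\Lambda^{a,k}))\big)^2\Lambda_h^{o,k}$ which, since $\Lambda^{o,k}$ stays in $(12,60)$ while the slaving factor is $O((k\pi h)^2)$, is $O((k\pi h)^2)$ relative to $\Lambda_h^{a,k}$; the bi-grid partner at $N+1-k$ carries $\tan^4(k\pi h/2)\,\Lambda_h^{a,N+1-k}$, again comparable to $\Lambda_h^{a,k}$ because $\Lambda^{a,N+1-k}$ is bounded and $\tan^2(k\pi h/2)\le1$ on the band; and its slaved optic partner is smaller still. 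Uniform boundedness of all the rational weights in $\Lambda^{\alpha,k}$ is guaranteed by Remarks \ref{remark1}--\ref{remark2}. Summation over dyadic blocks $k\pi h\in[\pi 2^{-n-1},\pi 2^{-n}]$, exactly as in \cite{IgZuaWave}, turns these per-frequency comparisons into the global bound above.

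The third step bounds the observation from below by the same low acoustic quantity. Let $U_N^{a}(t)$ denote the contribution to $U_N(t)$ of the acoustic modes with $k\pi h\in(0,\pi/2)$. On this band the acoustic dispersion relation has a uniformly positive gap, $\lambda_h^{a,k+1}-\lambda_h^{a,k}\ge\pi\min_{[0,\pi/2]}\partial_\eta\lambda^a>0$, so Ingham's theorem applied to $U_N^{a}$ yields $\int_0^T|U_N^{a}(t)/h|^2\,dt\ge c\sum_k(|\widehat u^{a,k}_+|^2+|\widehat u^{a,k}_-|^2)\,|\varphi_N^{a,k}/h|^2$; the boundary identity (\ref{ObservabilityEigenvectorsao}) of Proposition \ref{PropObservabilityEigenvectors} together with $\|\mathbf{\varphi}^{a,k}_h\|_{h,1}^2=\Lambda_h^{a,k}$ rewrites $|\varphi_N^{a,k}/h|^2=W(\Lambda^{a,k})\Lambda_h^{a,k}$, and $W$ is strictly positive on $(0,10)$ by Remark \ref{remark1}, hence bounded below on the compact band. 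Combined with the energy concentration estimate this closes the inequality, provided the remaining contributions to $U_N$ are shown not to destroy the lower bound.

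The main obstacle is twofold and lives in this last step. First, $U_N(t)$ is not purely low-acoustic: it also contains the slaved optic terms and the bi-grid-paired high-acoustic terms, whose frequencies $\lambda_h^{o,k}$ and $\lambda_h^{a,N+1-k}$ cluster (the optic gap degenerates as $k\pi h\to0$, the acoustic gap as $k\pi h\to\pi$), so no single global Ingham inequality is available and these pieces must be controlled through the bi-grid reflection relation rather than discarded by a crude triangle inequality. Second, plain Ingham on $[0,\pi/2]$ only delivers the threshold $T>2/\min_{[0,\pi/2]}\partial_\eta\lambda^a$, which is strictly larger than $2$; to reach the sharp $T>2$ one must exploit the dyadic decomposition near $\eta=0$, where $\partial_\eta\lambda^a(0)=1$ recovers the characteristic speed of the continuous problem in the limit. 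Reconciling this sharp-time decomposition with the simultaneous control of the spurious clusters is the delicate heart of the argument.
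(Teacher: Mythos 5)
Your proposal retraces the paper's own route step by step: the piecewise-linearity condition kills the resonant mode and slaves the optic coefficients to the acoustic ones (Proposition \ref{PropLinearData}), the bi-grid condition slaves the high acoustic frequencies to the low ones and annihilates the central mode (Proposition \ref{PropLowHighFreqAcoustical}; your reflection law $-\tan^2(k\pi h/2)$ is the correct nodal form of (\ref{lowhighfreqacou1})), these relations yield the energy concentration estimate of Proposition \ref{PropProjAcousticalLowFreq}, and the lower bound for the observation on the first half of the acoustic branch is Ingham's inequality combined with the spectral identity (\ref{ObservabilityEigenvectorsao}), i.e.\ the content of Theorem \ref{TheoremFourierTruncation}. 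Up to that point the sketch is structurally sound.

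The difficulties you then raise need correction, however. Your ``second obstacle'' is a phantom: expanding (\ref{FourierSymbols}) gives $\Lambda^a(\eta)=\eta^2+\eta^6/720+O(\eta^8)$, hence $\lambda^a(\eta)=\eta+\eta^5/1440+O(\eta^7)$, so the $P_2$ acoustic branch is superconvergent, the group velocity satisfies $\partial_\eta\lambda^a(\eta)\geq1$ on $[0,\pi/2]$, and its minimum there is $\partial_\eta\lambda^a(0)=1$. Consequently, plain Ingham applied to data carried by the first half of the acoustic branch (Theorem \ref{TheoremFourierTruncation} with $\eta_+^a=\pi/2$) already yields the sharp threshold $T>2/1=2$; this is exactly the point of the remark following Theorem \ref{TheoremBigrid}. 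You have imported the finite-difference intuition, where the group velocity drops below $1$ on $[0,\pi/2]$; here no extra work near $\eta=0$ is needed for the observability time. Your ``first obstacle'', by contrast, is the genuine one: $U_N(t)$ also carries the slaved optic and high-acoustic clusters, whose spectral gaps degenerate, so the truncated-data observability cannot be transferred to bi-grid data by a triangle inequality --- and your proposal stops precisely there, declaring the reconciliation ``the delicate heart'' without carrying it out. This is the step the paper settles by invoking the dyadic decomposition argument of \cite{IgZuaWave}, whose mechanism is already implicit in your Step 2: exactly where the gaps degenerate (optic near $\eta=0$, acoustic near $\eta=\pi$) the slaving factors vanish like $(k\pi h)^2$, and this complementarity is exploited block by dyadic block. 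As written, your argument is the paper's proof minus its concluding combination step, together with a misdiagnosis of what that step has to accomplish.
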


\begin{remark} Note that, in the bi-grid filtering mechanism we have designed, the data under consideration have been taken, before filtering through the classical bi-grid algorithm, to be piecewise linear in each interval $(x_j,x_{j+1})$, $j\in\zz$, which imposes a further restriction. This allows to obtain the sharp observability time. 

The bi-grid filtering algorithm proposed in Theorem \ref{TheoremBigrid} yields optimal observability time, i.e. the characteristic one $T^{\star}=2$. This is due to the fact that for a numerical scheme the minimal time required for the observability to hold is $2/v$,
where $v$ is the minimal group velocity involved in the corresponding solution. From our analysis, we will see that the bi-grid filtering algorithm above acts mainly as a Fourier truncation of the whole optic diagram and of the second half (the high frequency one) of the acoustic one.
Consequently, $v:=\min_{\eta\in[0,\pi/2]}\partial_{\eta}\lambda^{a}(\eta)$. Since the group velocity of the acoustic branch,
$\partial_{\eta}\lambda^a(\eta)$, is increasing on $[0,\pi/2]$, we conclude that $v=\partial_{\eta}\lambda^a(0)=1$ and then the observability time of the numerical scheme is sharp: $T>2$. 
\end{remark}
The following two auxiliary results hold:
\begin{proposition}If the initial data $(\mathbf{U}_h^0,\mathbf{U}_h^1)$ in (\ref{p2adjoint})
belong to $(\mathcal{L}_h\times\mathcal{L}_h)\cap\mathcal{V}_h$, then the resonant Fourier coefficients in (\ref{p2adjointSol}) vanish, i.e.
\begin{equation}\widehat{u}^r_{\pm}=0\label{nullresonant}\end{equation}
and the optic and acoustic ones are related by the following two identities:
\begin{equation}(\widehat{u}^{a,k}_++\widehat{u}^{a,k}_-)\big(m^{a,k}-n^{a,k}\cos\big(\frac{k\pi h}{2}\big)\big)
+(\widehat{u}^{o,k}_++\widehat{u}^{o,k}_-)\big(m^{o,k}-n^{o,k}\cos\big(\frac{k\pi h}{2}\big)\big)=0\label{opticalacoustical1}\end{equation}
and
\begin{equation}\lambda^{a,k}(\widehat{u}^{a,k}_+-\widehat{u}^{a,k}_-)\big(m^{a,k}-n^{a,k}\cos\big(\frac{k\pi h}{2}\big)\big)
+\lambda^{o,k}(\widehat{u}^{o,k}_+-\widehat{u}^{o,k}_-)\big(m^{o,k}-n^{o,k}\cos\big(\frac{k\pi h}{2}\big)\big)=0.\label{opticalacoustical2}\end{equation}
\label{PropLinearData}\end{proposition}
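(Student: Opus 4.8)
The plan is to exploit the definition of the space $\mathcal{L}_h$, which imposes the linearity constraint $F_{j+1/2}=(F_j+F_{j+1})/2$ on \emph{both} the position and velocity initial data, and to read off what this says about the Fourier coefficients in the eigenbasis $\mathcal{EF}_h$. First I would expand the constraint $U^i_{j+1/2}=(U^i_j+U^i_{j+1})/2$ for each $i=0,1$ using the Fourier representation (\ref{FourierRepresentationData}) together with the explicit componentwise form of the normalized eigenvectors in (\ref{p2eignormalized}) and (\ref{p2eignormalizedresonant}). Concretely, for a single acoustic or optic mode the nodal value is $\varphi_j^{\alpha,k}=n^{\alpha,k}\sin(k\pi x_j)$ and the midpoint value is $\varphi_{j+1/2}^{\alpha,k}=m^{\alpha,k}\sin(k\pi x_{j+1/2})$, so the linearity defect of the mode is governed by comparing $m^{\alpha,k}\sin(k\pi x_{j+1/2})$ against $\tfrac{1}{2}n^{\alpha,k}(\sin(k\pi x_j)+\sin(k\pi x_{j+1}))$. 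Using the sum-to-product identity $\sin(k\pi x_j)+\sin(k\pi x_{j+1})=2\cos(k\pi h/2)\sin(k\pi x_{j+1/2})$, the linearity defect of the $(\alpha,k)$ mode collapses to the scalar factor $m^{\alpha,k}-n^{\alpha,k}\cos(k\pi h/2)$ multiplying the common nodal shape $\sin(k\pi x_{j+1/2})$. This is precisely the quantity that appears in (\ref{opticalacoustical1}) and (\ref{opticalacoustical2}), which tells me the argument will come down to orthogonality of the sine profiles.

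Next I would handle the resonant mode separately. By (\ref{p2eignormalizedresonant}) the resonant eigenvector has \emph{zero} nodal components and nonzero alternating midpoint components $(-1)^j\sqrt{15}/(2\sqrt2)$. Since any $\mathbf{F}^h\in\mathcal{L}_h$ has its midpoint components completely determined by its nodal components via linear interpolation, the midpoint vector $(-1)^j$ must be orthogonal, in the $\mathcal{H}_h^0$ inner product, to the midpoint part of any interpolated field. The cleanest route is to observe that the resonant midpoint profile $(-1)^j$ is the highest-frequency midpoint oscillation, whereas the midpoint components of a piecewise linear field are averages of neighboring nodal values and hence carry no such alternating content; evaluating $\widehat{u}^{r,i}=(\mathbf{U}_h^i,\mathbf{\varphi}_h^r)_{h,0}$ using the explicit $\|\cdot\|_{h,0}$ quadrature in (\ref{discretenorms}) and the interpolation relation should make this inner product vanish term by term, giving $\widehat{u}^{r,i}=0$ for $i=0,1$ and therefore $\widehat{u}^r_{\pm}=0$ by the definition of $\widehat{u}^r_{\pm}$.

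To obtain the two coupling identities (\ref{opticalacoustical1}) and (\ref{opticalacoustical2}), I would impose the linearity constraint on the full Fourier sums and project onto each fixed nodal sine profile $\sin(k\pi x_{j+1/2})$. Writing out $U^i_{j+1/2}-\tfrac12(U^i_j+U^i_{j+1})=0$ and inserting the Fourier expansion, for each frequency $k$ the acoustic and optic modes share the \emph{same} nodal wave number $k\pi h$ (they differ only through the eigenvalue-dependent coefficients $n^{\alpha,k},m^{\alpha,k}$), so the profiles $\sin(k\pi x_{j+1/2})$ for distinct $k$ are linearly independent and the coefficient of each must vanish independently. This yields, for each $k$, a single scalar equation $\widehat{u}^{a,k,i}(m^{a,k}-n^{a,k}\cos(k\pi h/2))+\widehat{u}^{o,k,i}(m^{o,k}-n^{o,k}\cos(k\pi h/2))=0$ for $i=0,1$. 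Finally I would translate the two equations indexed by $i=0,1$ into the $\pm$ variables: recalling $\widehat{u}^{\alpha,k,0}=\widehat{u}^{\alpha,k}_++\widehat{u}^{\alpha,k}_-$ and $\widehat{u}^{\alpha,k,1}=i\lambda_h^{\alpha,k}(\widehat{u}^{\alpha,k}_+-\widehat{u}^{\alpha,k}_-)$ from the formulas just below (\ref{p2adjointSol}), the $i=0$ equation becomes (\ref{opticalacoustical1}) directly, while the $i=1$ equation, after dividing out the common factor $i/h$ (note $\lambda_h^{\alpha,k}=\lambda^{\alpha,k}/h$), becomes (\ref{opticalacoustical2}).

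The main obstacle I anticipate is bookkeeping at the two \emph{endpoints} of the interval rather than in the interior: the linearity constraint runs over $0\le j\le N$ and the sine profiles $\sin(k\pi x_{j+1/2})$ are not literally an orthonormal family over this half-integer index set, so the ``project and read off the coefficient'' step needs the trigonometric orthogonality relations (analogous to those in (\ref{TrigonometricIdentities})) to be verified at the midpoints, and one must confirm that the boundary terms at $j=0$ and $j=N$ (where $\tilde\varphi_0=\tilde\varphi_{N+1}=0$) do not spoil the clean frequency-by-frequency decoupling. I expect this to work because the midpoint sines $\sin(k\pi x_{j+1/2})$, $1\le k\le N$, do form an orthogonal basis of the relevant midpoint space, but verifying the decoupling carefully — and making sure the resonant profile is genuinely orthogonal to every interpolated field including at the endpoints — is where the real care is needed.
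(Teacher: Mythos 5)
Your proposal is correct and follows essentially the same route as the paper: expand the interpolation defect of $\mathbf{U}_h^0$ and $\mathbf{U}_h^1$ in the eigenbasis, collapse it via the sum-to-product identity to the factors $m^{\alpha,k}-n^{\alpha,k}\cos(k\pi h/2)$ multiplying the common profile $\sin(k\pi x_{j+1/2})$, and isolate each frequency with the discrete midpoint orthogonality $h\sum_{j=0}^N\sin(k\pi x_{j+1/2})\sin(l\pi x_{j+1/2})=\delta_{k,l}/2$ for $1\le k,l\le N+1$, then convert the $i=0,1$ equations into the $\pm$ variables exactly as you describe. The only cosmetic difference is that the paper disposes of the resonant coefficient inside this same projection by recognizing $(-1)^j=\sin((N+1)\pi x_{j+1/2})$ as the $(N+1)$-st midpoint sine, whereas you compute $(\mathbf{U}_h^i,\mathbf{\varphi}^r_h)_{h,0}$ directly; that computation also works, but note it reduces to $\frac{h}{3}\cdot\frac{\sqrt{15}}{2\sqrt{2}}\sum_{j=0}^N(-1)^j(U^i_j+U^i_{j+1})$, which vanishes by telescoping together with $U^i_0=U^i_{N+1}=0$, not term by term.
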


Taking squares in (\ref{opticalacoustical1}) and (\ref{opticalacoustical2}) and in view of 
(\ref{p2eignormalized}), we deduce that
\begin{equation}|\widehat{u}^{o,k}_++\widehat{u}^{o,k}_-|^2=\frac{W_1(\Lambda^{a,k})}{W_1(\Lambda^{o,k})}
|\widehat{u}^{a,k}_++\widehat{u}^{a,k}_-|^2\mbox{ and }|\widehat{u}^{o,k}_+-\widehat{u}^{o,k}_-|^2=\frac{\Lambda^{a,k}}{\Lambda^{o,k}}
\frac{W_1(\Lambda^{a,k})}{W_1(\Lambda^{o,k})}
|\widehat{u}^{a,k}_+-\widehat{u}^{a,k}_-|^2,\label{opticalacoustical}\end{equation}
where $$W_1(\Lambda)=\frac{\Lambda^2(\Lambda^2+16\Lambda+240)}{(\Lambda-10)(19\Lambda^2+120\Lambda-3600)}.$$

\begin{proof}[Proof of Proposition \ref{PropLinearData}] We will prove only (\ref{nullresonant}) and (\ref{opticalacoustical1}), the proof of
(\ref{opticalacoustical2}) being similar to the one for (\ref{opticalacoustical1}). Observe that the Fourier representation of the identity that characterizes
$\mathbf{U}_h^0\in\mathcal{L}_h$ is
\begin{align}\label{bigrid1}0=&\sum\limits_{k=1}^N[(\widehat{u}^{a,k}_++\widehat{u}^{a,k}_-)(m^{a,k}-n^{a,k}\cos(k\pi h/2))+
(\widehat{u}^{o,k}_++\widehat{u}^{o,k}_-)(m^{o,k}-n^{o,k}\cos(k\pi h/2))]\sin(k\pi x_{j+1/2})\\&+(\widehat{u}^r_++\widehat{u}^r_-)
\frac{\sqrt{15}}{2\sqrt{2}}\sin((N+1)\pi x_{j+1/2}),\nonumber\end{align}
for all $0\leq j\leq N$. Multiplying (\ref{bigrid1}) by $\sin(l\pi x_{j+1/2})$, $1\leq l\leq N+1$, adding in $0\leq j\leq N$ and taking into account that
$h\sum_{j=0}^N\sin(k\pi x_{j+1/2})\sin(l\pi x_{j+1/2})=\delta_{k,l}/2$, for all $1\leq k,l\leq N+1$, we conclude the two identities (\ref{nullresonant})
and (\ref{opticalacoustical1}).
\end{proof}
The total energy of data $(\mathbf{U}_h^0,\mathbf{U}_h^1)\in(\mathcal{L}_h\times\mathcal{L}_h)\cap\mathcal{V}_h$ in (\ref{p2adjoint}) can be written only in terms of the nodal components and coincides with the one of the $P_1$-finite element method
\begin{equation}\mathcal{E}_h(\mathbf{U}_h^0,\mathbf{U}_h^1)=\frac{h}{2}\sum\limits_{j=0}^N\Big|\frac{U^0_{j+1}-U^0_j}{h}\Big|^2+\frac{h}{12}\sum\limits_{j=0}^N
(2|U_j^1|^2+|U_{j+1}^1+U_j^1|^2).\label{p2energyLinearData}\end{equation}
Taking into account the form of the Fourier coefficients (\ref{nullresonant}) and (\ref{opticalacoustical}) corresponding to linear initial data, we obtain that the Fourier representation of the total energy (\ref{p2energyLinearData}) is
as follows:
\begin{equation}\mathcal{E}_h(\mathbf{U}_h^0,\mathbf{U}_h^1)=\frac{1}{2}
\sum\limits_{k=1}^N\Lambda^{a,k}_h\Big[\Big(1+\frac{\Lambda^{o,k}}{\Lambda^{a,k}}\frac{W_1(\Lambda^{a,k})}{W_1(\Lambda^{o,k})}\Big)
|\widehat{u}^{a,k}_++\widehat{u}^{a,k}_-|^2
+\Big(1+\frac{W_1(\Lambda^{a,k})}{W_1(\Lambda^{o,k})}\Big)
|\widehat{u}^{a,k}_+-\widehat{u}^{a,k}_-|^2\Big].\label{p2energyLinearDataFourier}\end{equation}

The second auxiliary result establishes that for initial data in the bi-grid subspace $\mathcal{S}_h$ in (\ref{p2SubsBigrid}), the high frequency Fourier coefficients on the acoustic branch can be evaluated in terms of the low frequency ones:
\begin{proposition}\label{PropLowHighFreqAcoustical}For each element
$(\mathbf{U}_h^0,\mathbf{U}_h^1)\in\mathcal{S}_h$ introduced in (\ref{p2SubsBigrid}), the following identities hold:
\begin{equation}\widehat{u}^{a,(N+1)/2}_+=\widehat{u}^{a,(N+1)/2}_-=0,\label{nullmidfrequency}\end{equation}
\begin{equation}\widehat{u}^{a,N+1-k}_++\widehat{u}^{a,N+1-k}_-=-\frac{n^{a,N+1-k}}{n^{a,k}}
\frac{W_2(\Lambda^{a,N+1-k})}{W_2(\Lambda^{a,k})}
(\widehat{u}^{a,k}_++\widehat{u}^{a,k}_-),\label{lowhighfreqacou1}\end{equation}
and
\begin{equation}\lambda^{a,N+1-k}(\widehat{u}^{a,N+1-k}_+-\widehat{u}^{a,N+1-k}_-)=-\frac{n^{a,N+1-k}}{n^{a,k}}
\frac{W_2(\Lambda^{a,N+1-k})}{W_2(\Lambda^{a,k})}
\lambda^{a,k}(\widehat{u}^{a,k}_+-\widehat{u}^{a,k}_-),\label{lowhighfreqacou2}\end{equation}
for all $1\leq k\leq (N-1)/2$, with
$$W_2(\Lambda)=\frac{(60-\Lambda)(\Lambda-10)(\Lambda-12)}{(\Lambda^2+16\Lambda+240)^2}.$$\end{proposition}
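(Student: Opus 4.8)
The plan is to exploit the bi-grid structure of the data together with the trigonometric degeneracy that occurs at the wavenumber $k=N+1-k$, i.e. around the frequency $k=(N+1)/2$. The key observation is that the bi-grid condition $F_{2j+1}=(F_{2j}+F_{2j+2})/2$ on the nodal components is precisely a linear-interpolation constraint on the coarse grid $\{x_{2j}\}$, and that when expressed in the Fourier basis $\sin(k\pi x_j)$ it couples the mode $k$ to the mode $N+1-k$. This is the standard aliasing phenomenon: on the fine grid, $\sin((N+1-k)\pi x_j)=(-1)^{j+1}\sin(k\pi x_j)$, so the high-frequency acoustic mode is the fine-grid reflection of the low-frequency one.

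\textbf{First} I would write down the Fourier representation of the bi-grid constraint, exactly in the style of the proof of Proposition \ref{PropLinearData}. Since the data are already assumed to lie in $\mathcal{L}_h\times\mathcal{L}_h$, Proposition \ref{PropLinearData} applies and tells us $\widehat{u}^r_{\pm}=0$, so the solution is a pure combination of acoustic and optic modes whose nodal components are $n^{\alpha,k}\sin(k\pi x_j)$. The bi-grid relation involves only the nodal values, so I would substitute the Fourier expansion of $U_j^i$ (in terms of $\sin(k\pi x_j)$ with coefficients built from $\widehat{u}^{a,k}_{\pm}$ and $\widehat{u}^{o,k}_{\pm}$ via $n^{a,k}$, $n^{o,k}$) into the identity $U_{2j+1}=(U_{2j}+U_{2j+2})/2$, valid for $0\le j\le (N-1)/2$. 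The plan is then to project this constraint against the appropriate discrete sine functions on the coarse grid. Because $N$ is odd, the resonant coarse frequency is $k=(N+1)/2$, which is self-aliased, and the orthogonality relation on the coarse grid forces the self-paired coefficient to vanish; this yields (\ref{nullmidfrequency}). For $1\le k\le (N-1)/2$, the same projection pairs $k$ with $N+1-k$ and produces a single scalar relation between the two combinations $\widehat{u}^{a,k}_++\widehat{u}^{a,k}_-$ and its high-frequency partner, together with the optic contributions.

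\textbf{The next step} is to eliminate the optic coefficients. Here I would invoke (\ref{opticalacoustical}), which already expresses $|\widehat{u}^{o,k}_{\pm}\pm\cdots|$ — more precisely the sum and difference combinations — in terms of the acoustic ones via the ratio $W_1(\Lambda^{a,k})/W_1(\Lambda^{o,k})$. Substituting these relations into the raw bi-grid identity should collapse everything onto the acoustic branch. The remaining algebra consists in simplifying the resulting coefficient into the stated form $-\tfrac{n^{a,N+1-k}}{n^{a,k}}\tfrac{W_2(\Lambda^{a,N+1-k})}{W_2(\Lambda^{a,k})}$, where the factor $W_2$ packages the product of the geometric normalization with the aliasing weight. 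For (\ref{lowhighfreqacou2}) one runs the same computation starting from the $\mathbf{U}_h^1$ constraint (the bi-grid condition applied to the velocity data), which carries an extra factor $\lambda^{a,k}$ in each coefficient because the velocity Fourier coefficients weight $\widehat{u}^{\alpha,k}_+-\widehat{u}^{\alpha,k}_-$ by the frequency; the structure is otherwise identical, which is why the statement asserts the proof is "similar."

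\textbf{The main obstacle} I anticipate is purely computational bookkeeping: correctly tracking the aliasing sign $\sin((N+1-k)\pi x_j)=(-1)^{j+1}\sin(k\pi x_j)$ and the matching relation at the midpoints, and then verifying that the messy rational expression obtained after eliminating the optic modes via (\ref{opticalacoustical}) genuinely simplifies to the compact factor involving $W_2$. Getting this to match requires using the spectral relation (\ref{p2spectral5}), namely $w(\Lambda^{\alpha,k})=\cos(k\pi h)$, together with the explicit form of $w$, so that $\cos((N+1-k)\pi h)=-\cos(k\pi h)$ lets me relate the $\Lambda$-values at the two paired frequencies. I would therefore keep the factors symbolic in terms of $n^{a,k}$, $W_1$, and $w(\Lambda)$ as long as possible, and only at the very end collect them into $W_2$, checking consistency against the vanishing at $\Lambda=10,12,60$ visible in the definition of $W_2$ — these being exactly the degenerate eigenvalue limits identified in the spectral analysis of Section \ref{SectFour}.
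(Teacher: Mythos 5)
Your proposal is correct in substance, but it follows a genuinely different route from the paper's proof, and two of its steps are stated loosely. The paper never expands the bi-grid constraint in the Fourier basis: it evaluates the acoustic coefficient directly as an inner product, $\widehat{u}^{a,k}_++\widehat{u}^{a,k}_-=(\mathbf{U}_h^0,\mathbf{\varphi}^{a,k}_h)_{h,0}$, uses $\mathbf{U}_h^0\in\mathcal{L}_h$ to reduce this to an explicit rational weight times the nodal sum $h\sum_jU^0_j\sin(k\pi x_j)$, and then uses $\mathbf{U}_h^0\in\mathcal{B}_h$ to rewrite that sum as $2\cos^2(k\pi h/2)$ times the coarse-grid sum $S_k:=h\sum_jU^0_{2j}\sin(k\pi x_{2j})$; by (\ref{p2spectral5}) the product of the two weights is exactly $16W_2(\Lambda^{a,k})n^{a,k}$, and since $S_{(N+1)/2}\equiv 0$ and $S_{N+1-k}=-S_k$, the identities (\ref{nullmidfrequency})--(\ref{lowhighfreqacou2}) drop out at once, the optic modes never entering the computation. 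Your route (project $U_{2j+1}=(U_{2j}+U_{2j+2})/2$ onto coarse sines, then eliminate the optic contributions) also closes, but note three points. First, the projection only kills the combined nodal coefficient $c^k=(\widehat{u}^{a,k}_++\widehat{u}^{a,k}_-)n^{a,k}+(\widehat{u}^{o,k}_++\widehat{u}^{o,k}_-)n^{o,k}$, so even (\ref{nullmidfrequency}) needs the optic elimination --- the $2\times 2$ homogeneous system obtained by pairing $c^{(N+1)/2}=0$ with (\ref{opticalacoustical1}), whose determinant is nonzero since $\Lambda^{a,k}<10<\Lambda^{o,k}$ --- and not coarse-grid orthogonality alone, as you claim. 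Second, the elimination must use the linear identities (\ref{opticalacoustical1})--(\ref{opticalacoustical2}), not the squared relations (\ref{opticalacoustical}) you cite, which have lost the sign information that the signed identities (\ref{lowhighfreqacou1})--(\ref{lowhighfreqacou2}) carry. Third, the rational simplification you postpone is where the real work lies, and it does succeed: Vieta's formula for (\ref{p2spectral6}) gives $\Lambda^{o,k}=60(60-\Lambda^{a,k})/(19\Lambda^{a,k}+60)$, with which the eliminated constraint becomes $A_kP(\Lambda^{a,k})+A_{N+1-k}P(\Lambda^{a,N+1-k})=0$, where $A_k:=(\widehat{u}^{a,k}_++\widehat{u}^{a,k}_-)n^{a,k}$ and $P(\Lambda):=\Lambda(19\Lambda^2+120\Lambda-3600)/[3(\Lambda-10)(\Lambda^2+16\Lambda+240)]$; this coincides with (\ref{lowhighfreqacou1}) because $(n^{a,k})^2=3\tilde{W}(\Lambda^{a,k})$ with $\tilde{W}$ as in (\ref{normh0}), $P(\Lambda)\tilde{W}(\Lambda)=\Lambda/3$, and
$$\Lambda^{a}(\eta)\,W_2\big(\Lambda^{a}(\eta)\big)=\frac{(1-\cos\eta)(1+\cos\eta)}{8}=\frac{\sin^2\eta}{8},$$
which is invariant under $\eta\mapsto\pi-\eta$ --- precisely the aliasing symmetry your argument needs. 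In short, both proofs rest on the same two structural facts plus aliasing; the paper's order of operations (test the data against the acoustic eigenvector first, then apply the bi-grid identity to the resulting scalar sum) buys a proof with no optic elimination and almost no algebra, whereas yours is more systematic but pays for it with the heavier computation you correctly anticipated as the main obstacle.
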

Taking squares in (\ref{lowhighfreqacou1}-\ref{lowhighfreqacou2}), we obtain that
\begin{equation}|\widehat{u}^{a,N+1-k}_++\widehat{u}^{a,N+1-k}_-|^2=\frac{W_3(\Lambda^{a,N+1-k})}{W_3(\Lambda^{a,k})}
|\widehat{u}^{a,k}_++\widehat{u}^{a,k}_-|^2\label{lowhighfreqacou3}\end{equation}
and
\begin{equation}|\widehat{u}^{a,N+1-k}_+-\widehat{u}^{a,N+1-k}_-|^2=\frac{\Lambda^{a,k}}{\Lambda^{a,N+1-k}}
\frac{W_3(\Lambda^{a,N+1-k})}{W_3(\Lambda^{a,k})}|\widehat{u}^{a,k}_+-\widehat{u}^{a,k}_-|^2,\ \forall 1\leq k\leq (N-1)/2,\label{lowhighfreqacou4}\end{equation}
where $$W_3(\Lambda)=W^2_2(\Lambda)\frac{(\Lambda-10)(\Lambda^2+16\Lambda+240)}{19\Lambda^2+120\Lambda-3600}=\frac{(60-\Lambda)^2(\Lambda-10)^3(\Lambda-12)^2}
{(19\Lambda^2+120\Lambda-3600)(\Lambda^2+16\Lambda+240)^3}.$$
\begin{proof}[Proof of Proposition \ref{PropLowHighFreqAcoustical}] Due to the orthogonality in $\mathcal{H}_h^0$ of the eigenvectors in $\mathcal{EF}_h$, to the fact that $\mathbf{U}_h^0\in\mathcal{L}_h$ and using the representation (\ref{p2eignormalized}) of the normalized eigenvectors, the following identity holds
\begin{equation}\widehat{u}^{a,k}_++\widehat{u}^{a,k}_-=(\mathbf{U}_h^0,\mathbf{\varphi}^{a,k}_h)_{h,0}=
\frac{4(60-\Lambda^{a,k})}{(\Lambda^{a,k})^2+16\Lambda^{a,k}+260}n^{a,k}h\sum\limits_{j=1}^NU^0_j\sin(k\pi x_j).\label{proof1}\end{equation}

Now, using the fact that $\mathbf{U}_h^0\in\mathcal{B}_h$, we have
$$h\sum\limits_{j=1}^NU^0_j\sin(k\pi x_j)=2\cos^2\big(\frac{k\pi h}{2}\big)h\sum\limits_{j=1}^{(N-1)/2}U_{2j}^0\sin(k\pi x_{2j}).$$
Taking  (\ref{p2spectral5}) into account, we have
\begin{equation}\widehat{u}^{a,k}_++\widehat{u}^{a,k}_-=16W_2(\Lambda^{a,k})n^{a,k}h\sum\limits_{j=1}^{(N-1)/2}U_{2j}^0\sin(k\pi x_{2j}).\label{proof2}
\end{equation}
For $k=(N+1)/2$, we obtain $\widehat{u}^{a,(N+1)/2}_++\widehat{u}^{a,(N+1)/2}_-=0$.  Since
$\sin((N+1-k)\pi x_{2j})=-\sin(k\pi x_{2j})$, we obtain (\ref{lowhighfreqacou1}) by equating the expressions of
$h\sum_{j=1}^{(N-1)/2}U_{2j}^0\sin(k\pi x_{2j})$
form the identity (\ref{proof2}) corresponding to $k$ and to $N+1-k$. The proof of (\ref{lowhighfreqacou2}) is similar, based on the
fact that $(\mathbf{U}_h^1,\mathbf{\varphi}^{a,k})_{h,0}=i\lambda^{a,k}_h(\widehat{u}^{a,k}_+-\widehat{u}^{a,k}_-)$, from which for $k=(N+1)/2$
we obtain that $\widehat{u}^{a,(N+1)/2}_+-\widehat{u}^{a,(N+1)/2}_-=0$, which concludes (\ref{nullmidfrequency}).
\end{proof}
Replacing the Fourier representations (\ref{lowhighfreqacou3}) and (\ref{lowhighfreqacou4}) into the total energy with linear initial data (\ref{p2energyLinearDataFourier}),
we obtain that energy corresponding to initial data $(\mathbf{U}_h^0,\mathbf{U}_h^1)\in\mathcal{S}_h$ in (\ref{p2SubsBigrid}) is given by
\begin{equation}\mathcal{E}_h(\mathbf{U}_h^0,\mathbf{U}_h^1)=\frac{1}{2}
\sum\limits_{k=1}^{(N-1)/2}\Lambda_h^{a,k}[(W_{+,k}^{lo}+W_{+,k}^{hi})|\widehat{u}^{a,k}_++\widehat{u}^{a,k}_-|^2
[(W_{-,k}^{lo}+W_{-,k}^{hi})|\widehat{u}^{a,k}_+-\widehat{u}^{a,k}_-|^2],\label{p2energLinearBigridDataFourier}\end{equation}
where the low and high frequency coefficients are given by
$$W_{+,k}^{lo}:=1+\frac{\Lambda^{o,k}}{\Lambda^{a,k}}\frac{W_1(\Lambda^{a,k})}{W_1(\Lambda^{o,k})},\quad W_{-,k}^{lo}:=1+\frac{W_1(\Lambda^{a,k})}{W_1(\Lambda^{o,k})},
\quad W_{-,k}^{hi}:=\Big(1+\frac{W_1(\Lambda^{a,N+1-k})}{W_1(\Lambda^{o,N+1-k})}\Big)\frac{W_3(\Lambda^{a,N+1-k})}{W_3(\Lambda^{a,k})}$$
and
$$W_{+,k}^{hi}:=\frac{\Lambda^{a,N+1-k}}{\Lambda^{a,k}}\Big(1+\frac{\Lambda^{o,N+1-k}}{\Lambda^{a,N+1-k}}
\frac{W_1(\Lambda^{a,N+1-k})}{W_1(\Lambda^{o,N+1-k})}\Big)\frac{W_3(\Lambda^{a,N+1-k})}{W_3(\Lambda^{a,k})}.$$

For any $\delta\in(0,1)$ which does not depend on $h$ and any solution (\ref{p2adjointSol}) of (\ref{p2adjoint}), let us introduce its projection
on the first $\delta(N+1)$ frequencies of the acoustic branch to be
\begin{equation}\Gamma^{a}_{\delta}\mathbf{U}_h(t)=\sum\limits_{\pm}\sum\limits_{k=1}^{\delta(N+1)}\widehat{u}^{a,k}_{\pm}
\exp(\pm it\lambda^{a,k}_h)\mathbf{\varphi}^{a,k}_h.\label{p2projacoudelta}\end{equation}
Observe that the projection (\ref{p2projacoudelta}) is still a solution of (\ref{p2adjoint}), therefore its total energy is conserved in time. Set
 $$\mathcal{E}_h(\Gamma^{a}_{\delta}(\mathbf{U}_h^0,\mathbf{U}_h^1)):=
\mathcal{E}_h(\Gamma^{a}_{\delta}\mathbf{U}_h(0),\Gamma^{a}_{\delta}\mathbf{U}_{h,t}(0))=\sum\limits_{k=1}^{\delta(N+1)}
\Lambda_h^{a,k}(|\widehat{u}^{a,k}_+|^2+|\widehat{u}^{a,k}_-|^2).$$
The following result provides a bound of the total energy of the solutions of (\ref{p2adjoint}) with initial data $(\mathbf{U}_h^0,\mathbf{U}_h^1)\in\mathcal{S}_h$ as in (\ref{p2SubsBigrid}) in terms of the total energy of their projections on the first half of the acoustic mode:
\begin{proposition}For any solution $\mathbf{U}^h(t)$ of (\ref{p2adjoint}) with initial data $(\mathbf{U}_h^0,\mathbf{U}_h^1)\in\mathcal{S}_h$ introduced in (\ref{p2SubsBigrid}), there exists a constant $C>0$ which does not depend on $h$ such that
\begin{equation}\mathcal{E}_h(\mathbf{U}_h^0,\mathbf{U}_h^1)\leq C
\mathcal{E}_h(\Gamma^a_{1/2}(\mathbf{U}_h^0,\mathbf{U}_h^1)).\label{EstimProjAcousticalLowFreq}
\end{equation}\label{PropProjAcousticalLowFreq}\end{proposition}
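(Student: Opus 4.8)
The plan is to reduce (\ref{EstimProjAcousticalLowFreq}) to a termwise comparison of the two energies in their acoustic Fourier representations. Using the parallelogram identity $|\widehat{u}^{a,k}_+|^2+|\widehat{u}^{a,k}_-|^2=\frac12\big(|\widehat{u}^{a,k}_++\widehat{u}^{a,k}_-|^2+|\widehat{u}^{a,k}_+-\widehat{u}^{a,k}_-|^2\big)$ together with the vanishing of the middle-frequency coefficients (\ref{nullmidfrequency}), the projected energy reads $\mathcal{E}_h(\Gamma^a_{1/2}(\mathbf{U}_h^0,\mathbf{U}_h^1))=\frac12\sum_{k=1}^{(N-1)/2}\Lambda^{a,k}_h\big(|\widehat{u}^{a,k}_++\widehat{u}^{a,k}_-|^2+|\widehat{u}^{a,k}_+-\widehat{u}^{a,k}_-|^2\big)$, whereas the full energy (\ref{p2energLinearBigridDataFourier}) carries exactly the same building blocks weighted by $W^{lo}_{\pm,k}+W^{hi}_{\pm,k}$. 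Hence it suffices to produce a constant $C$, independent of $h$ and $k$, with $W^{lo}_{\pm,k}+W^{hi}_{\pm,k}\le C$ for every $1\le k\le(N-1)/2$; inequality (\ref{EstimProjAcousticalLowFreq}) then follows block by block.

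The decisive simplification is the identity $(N+1-k)\pi h=\pi-k\pi h$, valid since $(N+1)h=1$. Writing $\eta=k\pi h\in(0,\pi/2)$, this turns the high-frequency eigenvalues into $\Lambda^{a,N+1-k}=\Lambda^a(\pi-\eta)$ and $\Lambda^{o,N+1-k}=\Lambda^o(\pi-\eta)$, so that all four weights become fixed continuous functions of $\eta$ alone, with no residual dependence on $h$. It is therefore enough to bound these four functions on $(0,\pi/2]$: uniform boundedness over the discrete sample $\{k\pi h\}$ is then automatic.

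The low-frequency weights are harmless. As $\eta\to0$ one has $\Lambda^a(\eta)\to0$ with $W_1(\Lambda^a)\sim(\Lambda^a)^2/150\sim\eta^4$, while $\Lambda^o(\eta)\to60$ and $W_1(\Lambda^o)\to W_1(60)>0$, so both $W^{lo}_{\pm,k}\to1$; on any $[\varepsilon,\pi/2]$ they are continuous because $\Lambda^a,\Lambda^o$ stay strictly inside $(0,10)$ and $(12,60)$, away from the poles of $W_1$. The genuine work lies in the high-frequency weights, and this is where I expect the main obstacle. As $\eta\to0$ several factors individually blow up: $\Lambda^a(\pi-\eta)/\Lambda^a(\eta)\sim10/\eta^2$, and $W_1(\Lambda^a(\pi-\eta))$ diverges like $\eta^{-2}$, since $W_1$ has a simple pole at $\Lambda=10$ and, because $\partial_\eta\Lambda^a(\pi)=0$, one has $10-\Lambda^a(\pi-\eta)\sim c\,\eta^2$. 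The saving cancellation comes from the ratio $W_3(\Lambda^a(\pi-\eta))/W_3(\Lambda^a(\eta))$: as $W_3$ vanishes to third order at $\Lambda=10$ while $W_3(\Lambda^a(\eta))\to W_3(0)=1/96>0$, this ratio decays like $\big(10-\Lambda^a(\pi-\eta)\big)^3\sim\eta^6$. Counting powers yields $W^{hi}_{+,k}\sim\eta^{-2}\cdot\eta^{-2}\cdot\eta^6=\eta^2$ and $W^{hi}_{-,k}\sim\eta^{-2}\cdot\eta^6=\eta^4$, both tending to $0$. The crux is to make these orders-of-vanishing rigorous, i.e. to verify that the triple zero of $W_3$ at $\Lambda=10$ exactly dominates the fourth-order blow-up produced jointly by $\Lambda^a(\pi-\eta)/\Lambda^a(\eta)$ and the pole of $W_1$.

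Finally I would assemble the estimates. On each $[\varepsilon,\pi/2]$ all four weights are continuous functions of $\eta$, the eigenvalues remaining inside $(0,10)$ and $(12,60)$ and hence away from the singularities of $W_1$ and the zeros of $W_3$; near $\eta=0$ they have the finite limits computed above. Consequently each weight is bounded on $(0,\pi/2]$ by a constant depending only on the rational functions $W_1,W_3$ and on the dispersion relations, not on $h$. Taking $C$ to be the maximum of the resulting bounds for $W^{lo}_{\pm}+W^{hi}_{\pm}$ and inserting it into the block-by-block comparison gives (\ref{EstimProjAcousticalLowFreq}).
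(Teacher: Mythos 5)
Your proof is correct and takes essentially the same approach as the paper's: you reduce (\ref{EstimProjAcousticalLowFreq}) to uniform bounds on the four weights $W^{lo}_{\pm,k}$, $W^{hi}_{\pm,k}$ appearing in (\ref{p2energLinearBigridDataFourier}), and then close the argument by the same key cancellation, namely that the triple zero of $W_3$ at $\Lambda=10$ dominates the simple pole of $W_1$ there together with the $1/\Lambda^{a,k}\sim\eta^{-2}$ blow-up. Your repackaging of the weights as continuous functions of $\eta=k\pi h\in(0,\pi/2]$ with finite limits at $\eta=0$ (plus the explicit parallelogram-identity reduction) is a slightly cleaner presentation of the paper's singularity analysis, not a genuinely different argument.
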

\begin{proof}[Proof of Proposition \ref{PropProjAcousticalLowFreq}] In order to obtain the estimate (\ref{EstimProjAcousticalLowFreq}),
we claim that it is sufficient
to prove that there exist $W_{\pm}^{lo}, W_{\pm}^{hi}>0$ independent of $h$ such that $W_{\pm,k}^{lo}\leq W_{\pm}^{lo}$ and
 $W_{\pm,k}^{hi}\leq W_{\pm}^{hi}$, for all $1\leq k\leq (N-1)/2$. Assuming this for a moment, we can take
$C:=\max\{W_{\pm}^{lo},W_{\pm}^{hi}\}$ for which (\ref{EstimProjAcousticalLowFreq}) holds.

Let us analyze the boundedness of the four coefficients. Observe that $W_{-,k}^{lo}$ involves the product of $W_1(\Lambda^{a,k})$ and $1/W_1(\Lambda^{o,k})$ for $kh\leq 1/2$.
But $W_1(\Lambda)$ is singular only for $\Lambda\to 10$, whereas for $kh\leq 1/2$, due to the increasing nature of $\Lambda^a$,
we have $\Lambda^{a,k}\leq \Lambda^a(\pi/2)=60/(13+2\sqrt{31})<3$. Also $1/W_1(\Lambda)$ is singular as $\Lambda\to 0$, but since $\Lambda^o$ is decreasing in $\eta$
we have $\Lambda^{o,k}\geq \Lambda^o(\pi/2)=(52+8\sqrt{31})/3>30$ for all $kh\leq 1/2$. In the same way, $W_{+,k}^{lo}$ contains the product of
$W_1(\Lambda^{a,k})/\Lambda^{a,k}$ with $\Lambda^{o,k}/W_1(\Lambda^{o,k})$, for $kh\leq 1/2$. The second factor $\Lambda/W_1(\Lambda)$ has a singularity as
$\Lambda\to 0$, but is evaluated for $\Lambda=\Lambda^{o,k}$ which is far from the singularity for all $kh\leq 1/2$. Since $W_1(\Lambda)$ contains a factor
$\Lambda^2$, we deduce that $W_1(\Lambda)/\Lambda$ has the same singularities as $W_1(\Lambda)$, i.e. $\Lambda=10$, but since we work on the first half
of the acoustic diagram, $\Lambda^{a,k}$ is far from that singularity. We conclude the existence
of $W_{\pm}^{lo}>0$ independent of $h$ such that $W_{\pm,k}^{lo}\leq W_{\pm}^{lo}$.

\smallskip The coefficient $W_{-,k}^{hi}$ contains two terms. The first of them is constituted by the factors $W_3(\Lambda^{a,N+1-k})$ and $1/W_3(\Lambda^{a,k})$.
Since $W_3(\Lambda)$ is not singular for any $\Lambda\in(0,10)\cup(12,60)$, then $W_3(\Lambda^{a,N+1-k})$ is bounded for all $kh\leq 1/2$.
On the other hand, $1/W_3(\Lambda)$ has three singularities: $\Lambda=10$, $\Lambda=12$ and $\Lambda=60$. Since all $\Lambda$-s situated on the first half of the acoustic mode are
far for all the three singularities, we deduce the boundedness of $1/W_3(\Lambda^{a,k})$. The second term is a product of four factors:
$W_1(\Lambda^{a,N+1-k})$, $W_3(\Lambda^{a,N+1-k})$, $1/W_1(\Lambda^{o,N+1-k})$ and $1/W_3(\Lambda^{a,k})$. In view of our previous analysis,
we deduce the boundedness of the last three factors. The first factor blows-up like $1/(10-\Lambda^{a,N+1-k})$ for small $k$. Nevertheless, in the same range of
$k$-s, $W_3(\Lambda^{a,N+1-k})$ behaves like $(10-\Lambda^{a,N+1-k})^3$, compensating in this way the singularity of the first factor $W_1(\Lambda^{a,N+1-k})$,
so that $W_1(\Lambda^{a,N+1-k})W_3(\Lambda^{a,N+1-k})$ is bounded for all $kh\leq 1/2$.

\smallskip The coefficient $W_{+,k}^{hi}$ also contains two terms. The first of them includes the factors: $\Lambda^{a,N+1-k}$, $1/\Lambda^{a,k}$, $W_3(\Lambda^{a,N+1-k})$
and $1/W_3(\Lambda^{a,k})$. We have already analyzed the first, the third and the fourth ones. The second one blows-up like
$\sin^{-2}(k\pi h/2)$ for small $k$. But, as we said, $W_3(\Lambda^{a,N+1-k})$ behaves like $(10-\Lambda^{a,N+1-k})^3\sim \sin^6(k\pi h/2)$ for small
$k$, compensating the singularity of the second factor, so that $W_3(\Lambda^{a,N+1-k})/\Lambda^{a,k}$ is bounded for all $kh\leq 1/2$. The second term
in $W_{+,k}^{hi}$ contains the factors $1/\Lambda^{a,k}$, $\Lambda^{o,N+1-k}$, $W_3(\Lambda^{a,N+1-k})$ and $1/W_3(\Lambda^{a,k})$ and is bounded by the same arguments
we used for the first term. Consequently, there exist $W_{\pm}^{hi}>0$ such that $W_{\pm,k}^{hi}\leq W_{\pm}^{hi}$, which concludes the proof.
\end{proof}
\begin{remark}\label{remark3}Set $\mathcal{L}_h^{\alpha}:
=\{\mathbf{F}^h=(F_{j/2})_{1\leq j\leq 2N+1}\mbox{ s.t. }F_{j+1/2}=\alpha(F_j+F_{j+1}),\ 0\leq j\leq N\}$ and 
$\mathcal{S}_h^{\alpha}:=
((\mathcal{L}_h^{\alpha}\cap\mathcal{B}_h)\times(\mathcal{L}_h^{\alpha}\cap\mathcal{B}_h))\cap\mathcal{V}_h$. Observe that
$\mathcal{L}_h=\mathcal{L}_h^{1/2}$ and $\mathcal{S}_h=\mathcal{S}_h^{1/2}$, where $\mathcal{S}_h$ is the space in (\ref{p2SubsBigrid}). We want to point out that the result of Theorem \ref{TheoremBigrid}
is not longer true when replace $\mathcal{S}_h$ by $\mathcal{S}_h^{\alpha}$, with $\alpha\not=1/2$, so that the condition on the initial data to be linear
is sharp. Indeed, when replacing $\mathcal{S}_h$ by $\mathcal{S}_h^{\alpha}$, in particular
$W_{+,k}^{lo}$ in (\ref{p2energLinearBigridDataFourier}) has to be substituted by
$$W_{+,k}^{lo,\alpha}:=1+\frac{\Lambda^{o,k}}{\Lambda^{a,k}}\frac{W_1^{\alpha}(\Lambda^{a,k})}{W_1^{\alpha}(\Lambda^{o,k})},\mbox{ with }
W_1^{\alpha}(\Lambda)=\frac{1}{25}\frac{(\Lambda^2+16\Lambda+240)(40-80\alpha+(1+8\alpha)\Lambda)^2}{(\Lambda-10)(19\Lambda^2+120\Lambda-3600)}.$$
One can show that, for $\alpha\not=1/2$, it is not longer true that $W_1^{\alpha}(\Lambda^{a,k})\to 0$ as $kh\to 0$, so that this factor cannot compensate
the singularity of $1/\Lambda^{a,k}$ as $kh\to 0$, like for $\alpha=1/2$. Consequently, for $\alpha\not=1/2$, at least $W_{+,k}^{lo,\alpha}$ is not bounded for
$1\leq k\leq (N-1)/2$.
\end{remark}

\begin{proof}[Proof of Theorem \ref{TheoremBigrid}] Proposition \ref{PropLowHighFreqAcoustical} ensures that the total energy of initial data in
$\mathcal{S}_h$ introduced in (\ref{p2SubsBigrid}) is uniformly bounded by the energy of their projection on the first half of the acoustic mode. On the other hand,  
Theorem \ref{TheoremFourierTruncation} guarantees that the observability inequality (\ref{p2ObservabilityInequalityGeneral}) with $B_h$ as in (\ref{ObservationOperators})
holds uniformly as $h\to 0$ in the class of truncated data lying on the first half of the acoustic mode.  Combining these two facts, one can apply a dyadic decomposition argument as in \cite{IgZuaWave} and conclude the proof of Theorem \ref{TheoremBigrid}.
\end{proof}

\textbf{2. The admissibility inequality.} In the rest of this section, our aim is to prove the direct inequality (\ref{p2DirectInequalityGeneral})
for the boundary operator $B_h$ in (\ref{ObservationOperators}) using the spectral identities (\ref{ObservabilityEigenvectorsao})
and (\ref{ObservabilityEigenvectorsr}). Firstly, let us observe that for all matrix operator $B_h$ and all $T>0$, we have the following inequality:
\begin{align}\label{directineq1}\int\limits_0^T||B_h\mathbf{U}_h(t)||^2_{\cc^{2N+1}}\,dt&
=\int\limits_0^T((B_hS_h^{-1/2})^*B_hS_h^{-1/2}S_h^{1/2}\mathbf{U}_h(t),S_h^{1/2}\mathbf{U}_h(t))_{\cc^{2N+1}}\,dt
\\&\leq ||(B_hS_h^{-1/2})^*B_hS_h^{-1/2}||_{\cc^{2N+1}}\int\limits_0^T(S_h\mathbf{U}_h(t),\mathbf{U}_h(t))_{\cc^{2N+1}}\,dt\nonumber\\&\leq 2T\mathcal{E}_h(\mathbf{U}_h^0,\mathbf{U}_h^1)||(B_hS_h^{-1/2})^*B_hS_h^{-1/2}||_{\cc^{2N+1}}.\nonumber\end{align}
For any matrix $A$, the matrix norm $||\cdot||_{\cc^{2N+1}}$ involved in the right hand side of (\ref{directineq1}) is defined as
$$||A^*A||_{\cc^{2N+1}}=\max_{||\mathbf{\varphi}_h||_{h,0}=1}||AM_h^{1/2}\mathbf{\varphi}_h||_{\cc^{2N+1}}.$$
In the above definition of the norm of a matrix, we can reduce the set of test functions to
$\mathbf{\varphi}_h\in\mathcal{EF}_h$ introduced in (\ref{p2SetEigenvectors}), which is an orthonormal basis in $\rr^{2N+1}$.
Let us remark that for any eigenfunction $\mathbf{\varphi}_h\in\mathcal{EF}_h$, the corresponding eigenvalue $\Lambda_h\in\mathcal{EV}_h$
verifies the identity $S_h^{-1/2}M_h^{1/2}\mathbf{\varphi}_h=\Lambda_h^{-1/2}\mathbf{\varphi}_h.$
Consequently, for any matrix $B_h$ and $\mathbf{\varphi}_h\in\mathcal{EF}_h$, we have
\begin{equation}||B_hS_h^{-1/2}M_h^{1/2}\mathbf{\varphi}_h||_{\cc^{2N+1}}=\Lambda_h^{-1/2}||B_h\mathbf{\varphi}_h||_{\cc^{2N+1}}.\label{directineq2}
\end{equation}

For $B_h$ in (\ref{ObservationOperators}), using the identities (\ref{directineq2}), (\ref{ObservabilityEigenvectorsao})
and (\ref{ObservabilityEigenvectorsr}), we conclude that
$$||(B_hS_h^{-1/2})^*B_hS_h^{-1/2}||_{\cc^{2N+1}}^2=\max_{\mathbf{\varphi}_h\in\mathcal{EF}_h}\frac{1}{\Lambda_h}\Big|\frac{\varphi_N}{h}\Big|^2
=\max\Big\{\max_{\Lambda\in(0,10)\cup(12,60)}W(\Lambda),\frac{3}{16}\Big\}$$
is a quantity independent of $h$.

\section{Convergence of the discrete controls}\label{SectConvergence}

In this section, we describe the algorithm of constructing the discrete controls of minimal $L^2(0,T)$-norm and we prove their convergence
towards the continuous HUM boundary control $\tilde{v}(t)$ in (\ref{contHUMcontrol}) as $h\to 0$, under the hypothesis that both
inverse and direct inequalities (\ref{p2ObservabilityInequalityGeneral}) and (\ref{p2DirectInequalityGeneral}) hold uniformly as $h\to 0$. As we saw in the previous sections, the above hypothesis holds when the initial data in the adjoint system (\ref{p2adjoint}) 
is filtered through a Fourier truncation or a bi-grid algorithm. 

\subsection{Description of the algorithm.} Using the \textit{admissibility inequality} (\ref{p2DirectInequalityGeneral})
and the \textit{observability one} (\ref{p2ObservabilityInequalityGeneral}), one can prove the \textit{continuity} and the \textit{uniform coercivity}
 of $\mathcal{J}_h$  defined by  (\ref{p2FunctionalGeneral}) on $\mathcal{S}_h\subset\mathcal{V}_h$, where $\mathcal{S}_h$ can be both the truncated space (\ref{p2SubspTruncation}) or the bi-grid one (\ref{p2SubsBigrid}). Moreover, it is \textit{strictly convex}.
Applying the \textit{direct method for the calculus of variations} (DMCV) (cf. \cite{Dacorogna}), one can guarantee the existence of an unique
minimizer
$(\mathbf{\tilde{U}}_h^0,\mathbf{\tilde{U}}_h^1)$ of $\mathcal{J}_h$, i.e.:
\begin{equation}\mathcal{I}_h:=\mathcal{J}_h(\mathbf{\tilde{U}}_h^0,\mathbf{\tilde{U}}_h^1)=
\min\limits_{(\mathbf{U}_h^0,\mathbf{U}_h^1)\in\mathcal{S}_h}\mathcal{J}_h(\mathbf{U}_h^0,\mathbf{U}_h^1).\label{minimizerJh}\end{equation}

Moreover, the Euler-Lagrange equation (\ref{p2EulerLagrange}) associated to $\mathcal{J}_h$
 characterizes the optimal control (\ref{p2OptimalControl}).

 Remark that when the space of initial data in (\ref{p2adjoint})  is restricted to a subspace $\mathcal{S}_h\subset\mathcal{V}_h$, for example the ones given by (\ref{p2SubspTruncation}) or (\ref{p2SubsBigrid}), the exact controllability condition (\ref{p2ExactControl}) holds for all
$(\mathbf{U}_h^0,\mathbf{U}_h^1)\in\mathcal{S}_h$. This does not imply that the final state
$(\mathbf{Y}_{h,t}(T),-\mathbf{Y}_h(T))$ in the controlled problem (\ref{p2controlledpbm}) is exactly controllable to the rest, but its orthogonal projection $\Gamma_{\mathcal{S}_h}$
from $\mathcal{V}_h$ on the subspace $\mathcal{S}_h$, i.e. 
$$\Gamma_{\mathcal{S}_h}(\mathbf{Y}_{h,t}(T),-\mathbf{Y}_h(T))=0\mbox{, meaning that }\langle(\mathbf{Y}_{h,t}(T),-\mathbf{Y}_h(T)),(\mathbf{U}^0_h,\mathbf{U}^1_h)\rangle_{\mathcal{V}_h',\mathcal{V}_h}=0,\ \forall (\mathbf{U}^0_h,\mathbf{U}^1_h)\in\mathcal{S}_h.$$

\subsection{The convergence result.} Set $\tilde{v}_h(t)$ to be the last component of the control $\mathbf{\tilde{V}}_h(t)$ in (\ref{p2OptimalControl}) (the other ones vanish). Since $\mathcal{I}_h\leq\mathcal{J}_h(0,0)=0$ and taking into account the inverse inequality
(\ref{p2ObservabilityInequalityGeneral}), we obtain: 
\begin{equation}\int\limits_0^T|\tilde{v}_h(t)|^2\,dt\leq 8C(T)||(\mathbf{Y}_h^1,
-\mathbf{Y}_h^0)||_{\mathcal{V}_h'}^2,\label{boundp2control}\end{equation}
where $C(T)=\sup\limits_{h\in(0,1)}C_h(T)$ and $C_h(T)$ is the observability constant in (\ref{p2ObservabilityInequalityGeneral}) under filtering.

Set $\mathcal{EF}:=\{\varphi^k(x)=\sqrt{2}\sin(k\pi x)\}$, $\Lambda^k:=k^2\pi^2$,
$\lambda^k:=k\pi$ and $\ell^2$ be the space of square summable sequences. Since $\mathcal{EF}$ is a Hilbertian basis in each $H^s(0,1)$, $s\in\rr$, the initial data $(y^1,-y^0)\in\mathcal{V}$ to be controlled in the continuous problem
(\ref{contwavecontrolled}) admits the following Fourier decomposition:
\begin{equation}y^i(x)=\sum\limits_{k=1}^{\infty}\widehat{y}^{k,i}\varphi^k(x),\mbox{ with }\widehat{y}^{k,i}=(y^i,\varphi^k)_{L^2},\ i=0,1.\label{contFourierDecompositionInitialDataControl}\end{equation}
Moreover, their $||\cdot||_{\mathcal{V}'}$-norm has the following Fourier representation:
\begin{equation}||(y^1,-y^0)||_{\mathcal{V}'}^2=\sum\limits_{k=1}^{\infty}\left(\frac{|\widehat{y}^{k,1}|^2}{\Lambda^k}+|\widehat{y}^{k,0}|^2\right).\end{equation}
Since the set $\mathcal{EF}_h$ introduced in (\ref{p2SetEigenvectors}) is a basis in
$\rr^{2N+1}$, the initial data $(\mathbf{Y}_h^1,\mathbf{Y}_h^0)\in\mathcal{V}_h'$ to be controlled in the discrete problem
(\ref{p2controlledpbm}) admit the following decomposition
\begin{equation}\mathbf{Y}_h=\sum\limits_{k=1}^N\widehat{y}^{a,k,i}_h\mathbf{\varphi}^{a,k}_h
+\widehat{y}^{r,i}_h\mathbf{\varphi}^r_h+
\sum\limits_{k=1}^N\widehat{y}^{o,k,i}_h\mathbf{\varphi}^{o,k}_h,\ \forall i=0,1,\label{p2FourierDecompInitialDataControl}\end{equation}
with
$$\widehat{y}^{\alpha,k,i}_h=
(\mathbf{Y}_h^{i},\mathbf{\varphi}^{\alpha,k}_h)_{h,0},\ \alpha\in\{a,o\}, 1\leq k\leq N, \mbox{ and }
\widehat{y}^{r,i}_h=
(\mathbf{Y}_h^{i},\mathbf{\varphi}^r_h)_{h,0}, \forall i=0,1.$$
Moreover, their $||\cdot||_{\mathcal{V}_h'}$-norm can be written in terms of the Fourier coefficients as follows:
\begin{equation}||(\mathbf{Y}_h^1,-\mathbf{Y}_h^0)||_{\mathcal{V}_h'}^2=\sum\limits_{k=1}^N
\frac{|\widehat{y}_h^{a,k,1}|^2}{\Lambda_h^{a,k}}+\frac{|\widehat{y}^{r,1}_h|^2}{\Lambda^r_h}+
\sum\limits_{k=1}^N
\frac{|\widehat{y}_h^{o,k,1}|^2}{\Lambda_h^{o,k}}
+\sum\limits_{k=1}^N
|\widehat{y}_h^{a,k,0}|^2+|\widehat{y}^{r,0}_h|^2+
\sum\limits_{k=1}^N
|\widehat{y}_h^{o,k,0}|^2.\label{Vhprimnorm}\end{equation}

The main result of this section is as follows:
\begin{theorem}In the controlled problem (\ref{p2controlledpbm}), we consider initial data with the following two properties:

\begin{equation}\left(\frac{\widehat{y}^{a,k,1}_h}{\lambda^{a,k}_h}\right)_k\rightharpoonup\left(\frac{\widehat{y}^{k,1}}{\lambda^k}\right)_k\quad\mbox{and}\quad
(\widehat{y}^{a,k,0}_h)_k\rightharpoonup(\widehat{y}^{k,0})_k\quad\mbox{as}\quad h\to 0,\quad\mbox{weakly in }\ell^2, \label{p2ConvDataContrAcoustical}\end{equation}
and
\begin{equation}\left(\frac{\widehat{y}^{o,k,1}_h}{\lambda^{o,k}_h}\right)_k\rightharpoonup 0\quad\mbox{and}\quad
(\widehat{y}^{o,k,0}_h)_k\rightharpoonup 0\quad\mbox{as}\quad h\to 0,\quad\mbox{weakly in }\ell^2. \label{p2ConvDataContrOpt}\end{equation}
Then
\begin{equation}\tilde{v}_h\rightharpoonup\tilde{v}\quad\mbox{as}\quad h\to0,\quad\mbox{weakly in }L^2(0,T),\label{ConvContr}\end{equation}
where $\tilde{v}_h$ is the last component of the discrete optimal control $\mathbf{\tilde{V}}_h(t)$ given by
(\ref{p2OptimalControl}) obtained by the minimization of the functional $\mathcal{J}_h$ on the subspace 
$\mathcal{S}_h$ defined in (\ref{p2SubspTruncation}) or in (\ref{p2SubsBigrid}) and $\tilde{v}(t)$ is the continuous HUM control (\ref{contHUMcontrol}). 

Moreover, if the convergences in (\ref{p2ConvDataContrAcoustical}) and (\ref{p2ConvDataContrOpt}) are strong, then the convergence of controls in (\ref{ConvContr}) is also strong.
\label{theoremweakconvergence}
\end{theorem}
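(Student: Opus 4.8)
The plan is to pass to the limit in the discrete optimality system and identify the weak limit of $\tilde v_h$ with the continuous HUM control, by a $\Gamma$-convergence argument for the functionals $\mathcal{J}_h$ defined in (\ref{p2FunctionalGeneral}). First I would establish the uniform bounds. The weak convergences (\ref{p2ConvDataContrAcoustical}) and (\ref{p2ConvDataContrOpt}), together with the Fourier representation (\ref{Vhprimnorm}) of the $\|\cdot\|_{\mathcal{V}_h'}$-norm, show that $\|(\mathbf{Y}_h^1,-\mathbf{Y}_h^0)\|_{\mathcal{V}_h'}$ stays bounded as $h\to 0$, since weakly convergent sequences in $\ell^2$ are bounded. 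Inserting this into the a priori bound (\ref{boundp2control})---which itself rests on the uniform observability of Theorems \ref{TheoremFourierTruncation} and \ref{TheoremBigrid}---yields a uniform bound on $\|\tilde v_h\|_{L^2(0,T)}$, and through the uniform coercivity of $\mathcal{J}_h$ on $\mathcal{S}_h$ a uniform energy bound on the minimizers $(\mathbf{\tilde U}_h^0,\mathbf{\tilde U}_h^1)$. Hence, up to a subsequence, $\tilde v_h\rightharpoonup w$ weakly in $L^2(0,T)$ and the acoustic Fourier coefficients of the minimizers converge weakly to coefficients defining a limit datum $(\tilde u^0,\tilde u^1)\in\mathcal{V}$.

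Second, I would pass to the limit in the discrete Euler--Lagrange identity (\ref{p2EulerLagrange}). As test data I take the projections onto $\mathcal{S}_h$ of a fixed smooth solution $u$ of the continuous adjoint problem (\ref{contwaveadjoint}); by consistency and the uniform admissibility (\ref{p2DirectInequalityGeneral}), the corresponding discrete adjoint solutions $\mathbf{U}_h$ converge to $u$ and, crucially, the observation $B_h\mathbf{U}_h$ of (\ref{ObservationOperators})---whose only nontrivial entry is $u_{h,x}(x_{N+1/2},t)$---converges strongly in $L^2(0,T)$ to $u_x(1,t)$. Combining this strong convergence with the weak convergence $\tilde v_h\rightharpoonup w$ allows me to pass to the limit on the left-hand side of (\ref{p2EulerLagrange}); on the right-hand side, (\ref{p2ConvDataContrAcoustical})--(\ref{p2ConvDataContrOpt}) make the discrete pairing converge to the continuous one $\langle(y^1,-y^0),(u(\cdot,0),u_t(\cdot,0))\rangle_{\mathcal{V}',\mathcal{V}}$. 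The outcome is that $w$ satisfies the continuous characterization (\ref{identitycontcontrol}), so $w$ is an admissible control.

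Third, to identify $w$ with the HUM control $\tilde v$, I would exploit minimality. A recovery sequence---the $\mathcal{S}_h$-projection of the continuous minimizer $(\tilde u^0,\tilde u^1)$---shows $\limsup_h \mathcal{I}_h\le \mathcal{J}(\tilde u^0,\tilde u^1)$, while weak lower semicontinuity of the quadratic term in (\ref{p2FunctionalGeneral}) and the limit passage in the linear term give the matching $\liminf$ inequality; thus the weak limit minimizes the continuous functional (\ref{contfunctional}), i.e. $w=\tilde v=\tilde u_x(1,\cdot)$ by (\ref{contHUMcontrol}). Since the continuous HUM control is unique, the limit is independent of the subsequence and the full sequence satisfies (\ref{ConvContr}). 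Finally, in the strong case, strong convergence of the data makes the right-hand side of (\ref{boundp2control}) converge together with its norm; coupled with the convergence of the minima $\mathcal{I}_h\to\mathcal{J}(\tilde u^0,\tilde u^1)$ this forces $\|\tilde v_h\|_{L^2(0,T)}\to\|\tilde v\|_{L^2(0,T)}$, which upgrades weak to strong convergence. The principal difficulty throughout is the limit passage in the observation term: one must show that the boundary operator $B_h$, which samples the derivative at the interior midpoint $x_{N+1/2}$ rather than at $x=1$, is nonetheless strongly consistent on the filtered test functions $\mathbf{U}_h\in\mathcal{S}_h$, and that these filtered discretizations approximate arbitrary smooth adjoint solutions densely enough for (\ref{identitycontcontrol}) to hold for every $u$.
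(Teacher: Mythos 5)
Your skeleton (uniform bounds via (\ref{boundp2control}), limit passage in the Euler--Lagrange identity (\ref{p2EulerLagrange}), identification of the limit with the HUM control, upgrade to strong convergence) matches the paper's, but the step you yourself flag as ``the principal difficulty'' is precisely the bulk of the proof, and you give no argument for it. You assert that, for test data given by the $\mathcal{S}_h$-projection of a smooth adjoint solution, the observation $B_h\mathbf{U}_h$ converges strongly in $L^2(0,T)$ to $u_x(1,t)$ ``by consistency and the uniform admissibility (\ref{p2DirectInequalityGeneral})''. For the truncated space (\ref{p2SubspTruncation}) this is essentially true, since the projection of a fixed continuous eigenmode is the corresponding discrete acoustic eigenvector. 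But for the bi-grid space (\ref{p2SubsBigrid}) the projection of even a single continuous mode $\sin(k\pi x)$ is the basis vector $\mathbf{\psi}^k_h$ of (\ref{BasisBigrid}), which \emph{necessarily} carries optic and high-frequency acoustic components with eigenvalues of order $1/h^2$; the corresponding discrete solution oscillates in time at the spurious frequencies $\lambda_h^{o,k}$, $\lambda_h^{a,N+1-k}$, $\lambda_h^{o,N+1-k}$, so it does not converge to $u$ in any naive consistency sense. Admissibility only bounds the observation of these spurious parts by their energy, and showing that this contribution vanishes as $h\to 0$ is exactly the compensation-of-singularities analysis with the weights $W_4$, $W_5$ (the error terms $E_{h,1}^k$, $E_{h,2}^k$ and the bullet-point estimates in the paper's Step I). Without it, your second step is an assertion, not a proof.

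There is a second, structural problem with the $\Gamma$-convergence identification. The liminf inequality for $\mathcal{J}_h$ requires passing to the limit in the duality term $\langle(\mathbf{Y}_h^1,-\mathbf{Y}_h^0),(\mathbf{\tilde{U}}_h(0),\mathbf{\tilde{U}}_{h,t}(0))\rangle_{\mathcal{V}_h',\mathcal{V}_h}$, in which, under the hypotheses (\ref{p2ConvDataContrAcoustical})--(\ref{p2ConvDataContrOpt}), \emph{both} factors converge only weakly; a product of two weakly convergent sequences need not converge, so this term cannot be passed to the limit as stated. The paper circumvents this in two ways you would need to reproduce: in Step I it tests (\ref{p2EulerLagrange}) only against fixed single Fourier modes, so that only one component of the weakly convergent data appears (weak $\ell^2$ convergence is componentwise convergence), and in Step II it identifies the weak limit of $-\tilde{U}_N(t)/h$ as $\tilde{u}^*_x(1,t)$ through the acoustic/optic splitting with $C_c^k$ smoothing and the $h^k$ decay of the optic contribution; the quadratic-functional minimality ($\Gamma$-convergence) is then a consequence, not the mechanism. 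Similarly, your strong-convergence argument leans on convergence of the minima $\mathcal{I}_h$, which inherits these gaps, whereas the paper obtains norm convergence directly from (\ref{p2EulerLagrangePartic}), where the strong data convergence pairs against the merely bounded minimizer coefficients (strong-times-weak), splitting into $P_h^a$ and a vanishing $P_h^o$.
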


\begin{proof}[Proof of Theorem \ref{theoremweakconvergence}] Firstly, let us observe that from (\ref{p2ConvDataContrAcoustical}) and (\ref{p2ConvDataContrOpt}), we obtain that there 
exists a constant $C>0$ independent of $h$ such that 

\begin{equation}||(\mathbf{Y}_h^1,-\mathbf{Y}_h^0)||_{\mathcal{V}_h'}
\leq C.\label{p2boundeddatacontrol}\end{equation}
By combining (\ref{boundp2control}) and (\ref{p2boundeddatacontrol}), we get the uniform boundedness as $h\to 0$ of the discrete control $\tilde{v}_h$  in
$L^2(0,T)$, so that 
\begin{equation}\tilde{v}_h\rightharpoonup\tilde{v}^*\quad\mbox{as}\quad h\to 0,\quad\mbox{weakly in }L^2(0,T).\label{p2WeakConvControl}\end{equation}
It is sufficient to prove that the weak limit $\tilde{v}^*$ coincides with the continuous HUM control $\tilde{v}$ given by (\ref{contHUMcontrol}).

The control $\tilde{v}$ can be characterized as the unique control $v$  in (\ref{contwavecontrolled}) which can be expressed
as the space derivative of a solution of the adjoint problem
(\ref{contwaveadjoint}) evaluated at $x=1$. Then, we have to prove that $\tilde{v}^*$ is an admissible control of the continuous wave equation, i.e. it verifies the identity (\ref{identitycontcontrol}), and that $\tilde{v}^*=\tilde{u}^*_x(1,t)$, where $\tilde{u}^*(x,t)$ is the solution of the adjoint problem
(\ref{contwaveadjoint}) for some initial data $(\tilde{u}^{*,0},\tilde{u}^{*,1})\in\mathcal{V}$.

\textbf{Step I - The weak limit $\tilde{v}^*$ is a control in the continuous problem (\ref{contwavecontrolled}).}

\noindent Since $\{(\pm\varphi^k/i\lambda^k,\varphi^k),k\in\nn\}$ is an orthonormal basis in $\mathcal{V}:=H_0^1\times L^2(0,1)$, then the fact that
$v$ is a control in (\ref{contwavecontrolled}), so it satisfies (\ref{identitycontcontrol}), it is equivalent  to prove (\ref{identitycontcontrol})
for all initial data of the form $(u^0,u^1)=(\pm\varphi^k/i\lambda^k,\varphi^k)$, $k\in\nn$. The solution of the adjoint problem
(\ref{contwaveadjoint}) with this kind of initial data is $u(x,t)=\pm\exp(\pm i\lambda^k(t-T))\varphi^k(x)/i\lambda^k$.
The condition (\ref{identitycontcontrol}) is equivalent to the following one:
\begin{equation}\int\limits_0^Tv(t)\exp(\pm it\lambda^k)\,dt=\frac{(-1)^k}{\sqrt{2}}
\left(\frac{\widehat{y}^{k,1}}{\lambda^k}\mp i\widehat{y}^{k,0}\right),\forall k\in\nn.\label{identitycontrolFourier}\end{equation}
Let us check that $\tilde{v}^*$ satisfies (\ref{identitycontrolFourier}).  To do it, we distinguish between the two cases of subspaces $\mathcal{S}_h$ of filtered data. Firstly, let us consider the case
of truncated initial data, i.e. $\mathcal{S}_h$ is given by (\ref{p2SubspTruncation}). A particular class of initial data in $\mathcal{S}_h$
is $(\mathbf{U}_h^0,\mathbf{U}_h^1)=(\pm\mathbf{\varphi}^{a,k}_h/i\lambda^{a,k}_h,\mathbf{\varphi}^{a,k}_h),$
for which the solution of the discrete adjoint problem (\ref{p2adjoint}) is $\mathbf{U}_h(t)=
\pm\mathbf{\varphi}^{a,k}_h\exp(\pm i\lambda_h^{a,k}(t-T))/i\lambda_h^{a,k}$, for all $k\pi h\leq\eta_+^a$. From (\ref{p2EulerLagrange}),
we see that the discrete control $\tilde{v}_h$ verifies the identity
\begin{equation}\int\limits_0^T\tilde{v}_h(t)\exp(\pm it\lambda_h^{a,k})\,dt=\frac{(-1)^k}{n^{a,k}}\frac{\lambda_h^{a,k}}{\frac{\sin(k\pi h)}{h}}
\left(\frac{\widehat{y}^{a,k,1}_h}{\lambda_h^{a,k}}\mp i\widehat{y}^{a,k,0}_h\right),\forall k\pi h\leq\eta_+^a.\label{p2EulerLagrangeFourier}\end{equation}
Let us fix $k\in\nn$ (independent of $h$). In that case, $\exp(\pm it\lambda_h^{a,k})\to\exp(\pm it\lambda^k)$ as $h\to 0$ strongly in $L^2(0,T)$, so
that, taking into account the weak convergence (\ref{p2WeakConvControl}), we can pass to the
limit as $h\to 0$ in the left hand side of (\ref{p2EulerLagrangeFourier}) and we obtain the left hand side of (\ref{identitycontrolFourier}) with
$v$ substituted by $\tilde{v}^*$. On the other hand, taking into account the condition
(\ref{p2ConvDataContrAcoustical}), which is valid for all test sequences in $\ell^2$ and in particular for the basis functions of $\ell^2$,
$e^k=(0,\cdots,0,1,0,\cdots)$ (meaning that the weak convergence in $\ell^2$ is a pointwise convergence), and additionally the fact that
$n^{a,k}\to \sqrt{2}$ and $\lambda_h^{a,k}/\sin(k\pi h)/h\to 1$ as $h\to 0$ for each fixed $k$,  passing to the limit as $h\to0$ in the right hand side
of (\ref{p2EulerLagrange}), we obtain the right hand side of (\ref{identitycontrolFourier}). Then, the weak limit $\tilde{v}^*$ of the optimal control $\tilde{v}_h$
obtained by minimizing the functional $\mathcal{J}_h$ on
$\mathcal{S}_h$ in (\ref{p2SubspTruncation}) is a control for the continuous problem.

Let us consider now the case of linear initial data given by a bi-grid algorithm, i.e. $\mathcal{S}_h$ is given by (\ref{p2SubsBigrid}). Taking into account Propositions
\ref{PropLinearData} and \ref{PropLowHighFreqAcoustical}, we see that for initial data
 $(\mathbf{U}_h^0,\mathbf{U}_h^1)\in\mathcal{S}_h$ the Fourier
representation (\ref{FourierRepresentationData}) has the more particular form
\begin{equation}\mathbf{U}_h^0=\sum\limits_{k=1}^{(N-1)/2}(\widehat{u}^{a,k}_++\widehat{u}^{a,k}_-)\mathbf{\psi}^k_h
\mbox{ and }\mathbf{U}_h^1=\sum\limits_{k=1}^{(N-1)/2}(\widehat{u}^{a,k}_+-\widehat{u}^{a,k}_-)i\lambda_h^{a,k}\mathbf{\psi}^k_h.
\label{FourierRepresentationDataBigrid}\end{equation}
The basis function $\mathbf{\psi}^k_h$ for the space $\mathcal{S}_h$ is given by
\begin{equation}\mathbf{\psi}^k_h=\mathbf{\varphi}^{a,k}_h-
\frac{W_4(\Lambda^{a,k})}{W_4(\Lambda^{o,k})}\mathbf{\varphi}^{o,k}_h-\frac{W_5(\Lambda^{a,N+1-k})}{W_5(\Lambda^{a,k})}
\mathbf{\varphi}^{a,N+1-k}_h+\frac{W_5(\Lambda^{a,N+1-k})}{W_5(\Lambda^{a,k})}
\frac{W_4(\Lambda^{a,N+1-k})}{W_4(\Lambda^{o,N+1-k})}\mathbf{\varphi}^{o,N+1-k}_h,\label{BasisBigrid}\end{equation}
where
\begin{equation}W_4(\Lambda)=-\frac{\Lambda}{\Lambda-10}\sqrt{\tilde{W}(\Lambda)}\mbox{ and }
W_5(\Lambda)=\frac{(60-\Lambda)(\Lambda-10)(\Lambda-12)}{(\Lambda^2+16\Lambda+240)^2}
\sqrt{\tilde{W}(\Lambda)}\nonumber\end{equation}
and $\tilde{W}$ is defined by (\ref{normh0}). Let us fix $1\leq k\leq (N-1)/2$ and consider the homogeneous problem (\ref{p2adjoint}) with initial data
$(\mathbf{U}_h^0,\mathbf{U}_h^1)=(s\mathbf{\psi}^k/i\lambda_h^{a,k},\mathbf{\psi}^k)$, $s\in\{-1,+1\}$, for which the solution
takes the form
\begin{equation}\mathbf{U}_h(t)=\widehat{u}^{a,k}_{lo}(t)\mathbf{\varphi}^{a,k}_h
+\widehat{u}^{o,k}_{lo}(t)\mathbf{\varphi}^{o,k}_h
+\widehat{u}^{a,k}_{hi}(t)\mathbf{\varphi}^{a,N+1-k}_h
+\widehat{u}^{o,k}_{hi}(t)\mathbf{\varphi}^{o,N+1-k}_h,\label{typicalsolutionbigrid}\end{equation}
where the low frequency coefficients are $$\widehat{u}^{a,k}_{lo}(t)=s\frac{1}{i\lambda_h^{a,k}}\exp(is\lambda_h^{a,k}(t-T)),\quad
\widehat{u}^{o,k}_{lo}(t)=-\frac{W_4(\Lambda^{a,k})}{W_4(\Lambda^{o,k})}\sum\limits_{\pm}\frac{1}{2}\Big(\frac{s}{i\lambda_h^{a,k}}
\pm\frac{1}{\lambda_h^{o,k}}\Big)\exp(\pm i\lambda_h^{o,k}(t-T))$$
and the high frequency ones are
$$\widehat{u}^{a,k}_{hi}(t)=-\frac{W_5(\Lambda^{a,N+1-k})}{W_5(\Lambda^{a,k})}\sum\limits_{\pm}\frac{1}{2}\Big(\frac{s}{i\lambda_h^{a,k}}
\pm\frac{1}{\lambda_h^{a,N+1-k}}\Big)\exp(\pm i\lambda_h^{a,N+1-k}(t-T))$$
and
$$\widehat{u}^{o,k}_{hi}(t)=\frac{W_5(\Lambda^{a,N+1-k})}{W_5(\Lambda^{a,k})}
\frac{W_4(\Lambda^{a,N+1-k})}{W_4(\Lambda^{o,N+1-k})}\sum\limits_{\pm}\frac{1}{2}\Big(\frac{s}{i\lambda_h^{a,k}}
\pm\frac{1}{\lambda_h^{o,N+1-k}}\Big)\exp(\pm i\lambda_h^{o,N+1-k}(t-T)).$$

By considering the particular class of solutions given by (\ref{typicalsolutionbigrid}) into (\ref{p2EulerLagrange}), we see that the control
$\tilde{v}_h(t)$ satisfies the identity
\begin{align}\label{p2EulerLagrangeFourierBigrid}\int\limits_0^T\tilde{v}_h(t)\exp(is\lambda_h^{a,k}t)\,dt&=\frac{(-1)^k}{n^{a,k}}\frac{\lambda_h^{a,k}}{\frac{\sin(k\pi h)}{h}}
\left(\frac{\widehat{y}^{a,k,1}_h}{\lambda_h^{a,k}}-is\widehat{y}^{a,k,0}_h\right)\\\nonumber&+
\frac{is(-1)^k}{n^{a,k}}\frac{\lambda_h^{a,k}}{\frac{\sin(k\pi h)}{h}}(E_{h,1}^k+E_{h,2}^k)\exp(is\lambda_h^{a,k}T),\end{align}
where the error terms are
$$E_{h,1}^k=-(-1)^k\frac{\sin(k\pi h)}{h}\int\limits_0^T\tilde{v}_h(t)(\widehat{u}^{o,k}_{lo}(t)n^{o,k}+
\widehat{u}^{a,k}_{hi}(t)n^{a,N+1-k}+\widehat{u}^{o,k}_{hi}(t)n^{o,N+1-k})\,dt$$
and
$$E_{h,2}^k=\widehat{y}^{o,k}_{h,1}\widehat{u}^{o,k}_{lo}(0)+
\widehat{y}^{a,N+1-k}_{h,1}\widehat{u}^{a,k}_{hi}(0)+\widehat{y}^{o,N+1-k}_{h,1}\widehat{u}^{o,k}_{hi}(0)
-\widehat{y}^{o,k}_{h,0}\widehat{u}^{o,k}_{lo,t}(0)-\widehat{y}^{a,N+1-k}_{h,0}\widehat{u}^{a,k}_{hi,t}(0)-
\widehat{y}^{o,N+1-k}_{h,0}\widehat{u}^{o,k}_{hi,t}(0).$$

Passing to the limit as $h\to 0$ in the left hand side and in the first term in the right hand side of (\ref{p2EulerLagrangeFourierBigrid})
can be done as for the truncated space $\mathcal{S}_h$ in (\ref{p2SubspTruncation}). Therefore, in order to prove that
the weak limit $\tilde{v}^*$ satisfies (\ref{identitycontrolFourier}), it is enough to show that the error terms are small as $h\to 0$, i.e.
\begin{equation}|E_{h,1}^k|\to0\mbox{ and }|E_{h,2}^k|\to 0\mbox{ as }h\to 0.\label{ConvErrorBigrid}\end{equation}
From the fact that the $L^2(0,T)$-norm of the discrete control $\tilde{v}_h(t)$ is uniformly bounded as $h\to 0$, the Cauchy-Schwartz inequality and the explicit expressions of 
$\widehat{u}^{o,k}_{lo}(t)$, $\widehat{u}^{a,k}_{hi}(t)$ and $\widehat{u}^{o,k}_{hi}(t)$, we obtain
$$|E_{h,1}^k|\leq C\sqrt{T}\big(E_{h,1}^{k,1}+E_{h,1}^{k,2}+E_{h,1}^{k,3}\big)^{1/2},$$
with
$$E_{h,1}^{k,1}=|n^{o,k}|^2\Big|\frac{W_4(\Lambda^{a,k})}{W_4(\Lambda^{o,k})}\Big|^2\Big(\frac{\sin^2(k\pi h)}{\Lambda^{a,k}}+
\frac{\sin^2(k\pi h)}{\Lambda^{o,k}}\Big),$$
$$E_{h,1}^{k,2}=|n^{a,N+1-k}|^2\Big|\frac{W_5(\Lambda^{a,N+1-k})}{W_5(\Lambda^{a,k})}
\Big|^2\Big(\frac{\sin^2(k\pi h)}{\Lambda^{a,k}}+
\frac{\sin^2(k\pi h)}{\Lambda^{a,N+1-k}}\Big)$$
and $$E_{h,1}^{k,3}=|n^{o,N+1-k}|^2\Big|\frac{W_4(\Lambda^{a,N+1-k})}{W_4(\Lambda^{o,N+1-k})}\Big|^2\Big|\frac{W_5(\Lambda^{a,N+1-k})}{W_5(\Lambda^{a,k})}
\Big|^2\Big(\frac{\sin^2(k\pi h)}{\Lambda^{a,k}}+
\frac{\sin^2(k\pi h)}{\Lambda^{o,N+1-k}}\Big),$$
where $W_4$ and $W_5$ are as in (\ref{BasisBigrid}). 

On the other hand, since the $||\cdot||_{\mathcal{V}_h'}$ - norm of the initial data
$(\mathbf{Y}_h^1,\mathbf{Y}_h^0)$ to be controlled is uniformly bounded as $h\to 0$, we see that
$$|E_{h,2}^k|\leq C(E_{h,2}^{k,1}+E_{h,2}^{k,2}+E_{h,2}^{k,3})^{1/2},$$
with
$$E_{h,2}^{k,1}=\Lambda_h^{o,k}|\widehat{u}^{o,k}_{lo}(0)|^2+|\widehat{u}^{o,k}_{lo,t}(0)|^2=
\Big|\frac{W_4(\Lambda^{a,k})}{W_4(\Lambda^{o,k})}\Big|^2\Big(\frac{\Lambda^{o,k}}{\Lambda^{a,k}}+1\Big),$$
$$E_{h,2}^{k,2}=\Lambda_h^{a,N+1-k}|\widehat{u}^{a,k}_{hi}(0)|^2+|\widehat{u}^{a,k}_{hi,t}(0)|^2=
\Big|\frac{W_5(\Lambda^{a,N+1-k})}{W_5(\Lambda^{a,k})}\Big|^2\Big(\frac{\Lambda^{a,N+1-k}}{\Lambda^{a,k}}+1\Big)$$
and
$$E_{h,2}^{k,3}=\Lambda_h^{o,N+1-k}|\widehat{u}^{o,k}_{hi}(0)|^2+|\widehat{u}^{o,k}_{hi,t}(0)|^2=
\Big|\frac{W_4(\Lambda^{a,N+1-k})}{W_4(\Lambda^{o,N+1-k})}\Big|^2\Big|\frac{W_5(\Lambda^{a,N+1-k})}{W_5(\Lambda^{a,k})}\Big|^2\Big(\frac{\Lambda^{o,N+1-k}}{\Lambda^{a,k}}+1\Big).$$
For a fixed $k\in\nn$, let us study the convergence as $h\to 0$ of the terms $E_{h,j}^{k,i}$, $1\leq i\leq3$, $1\leq j\leq 2$:
\begin{itemize}
 \item From $|n^{o,k}|^2\to 3\tilde{W}(60)=10$ ($\tilde{W}$ introduced in (\ref{normh0})), $|W_4(\Lambda^{a,k})|^2\to |W_4(0)|^2=0$,
$|W_4(\Lambda^{o,k})|^2\to |W_4(60)|^2=24/25$, $\sin^2(k\pi h)/\Lambda^{a,k}\to 1$ and $\sin^2(k\pi h)/\Lambda^{o,k}\to 0$, we see that
 $E_{h,1}^{k,1}\to 0$ as $h\to 0$.
\item From $|n^{a,N+1-k}|^2\to 3\tilde{W}(10)=0$, $|W_5(\Lambda^{a,N+1-k})|^2\to |W_5(10)|^2=0$, $|W_5(\Lambda^{a,k})|^2\to|W_5(0)|^2=1/96$
and $\sin^2(k\pi h)/\Lambda^{a,N+1-k}\to 0$, we see that $E_{h,1}^{k,2}\to 0$ as $h\to 0$.
\item Remark that $|n^{o,N+1-k}|^2\to 3\tilde{W}(12)=6$,
$|W_4(\Lambda^{o,N+1-k})|^2\to|W_4(12)|^2=72$, $\sin^2(k\pi h)/\Lambda^{o,N+1-k}\to 0$, but $|W_4(\Lambda^{a,N+1-k})|^2\to |W_4(10)|^2=\infty$. Nevertheless,
$|W_4(\Lambda^{a,N+1-k})|^2|W_5(\Lambda^{a,N+1-k})|^2\to 0$, so that at the end $E_{h,1}^{k,3}\to 0$ as $h\to 0$.
\item Remark that $\Lambda^{o,k}\to 60$ and $\Lambda^{a,k}\to 0$. Observe that $|W_4(\Lambda^{a,k})|^2\to 0$ like
$|\Lambda^{a,k}|^2$. This compensates the singularity of $1/\Lambda^{a,k}$ so that $E_{h,2}^{k,1}\to 0$ as $h\to 0$.
\item $|W_5(\Lambda^{a,N+1-k})|^2\to 0$ since it involves the factor $(10-\Lambda^{a,N+1-k})^3\sim \sin^6(k\pi h/2)$. This
cancels the singularity introduced by $1/\Lambda^{a,k}\sim 1/\sin^2(k\pi h/2)$ and ensures that $E_{h,2}^{k,2}\to 0$ as $h\to 0$.
\item $|W_4(\Lambda^{a,N+1-k})|^2|W_5(\Lambda^{a,N+1-k})|^2\to 0$ since it contains the factor 
$(10-\Lambda^{a,N+1-k})^2\sim \sin^4(k\pi h/2)$ which compensates the singularity of $1/\Lambda^{a,k}\sim 1/\sin^2(k\pi h/2)$ so that
$E_{h,2}^{k,3}\to 0$ as $h\to 0$.
\end{itemize}
This concludes (\ref{ConvErrorBigrid}) and the fact that the weak limit $\tilde{v}^*$ of the sequence of discrete HUM controls obtained by minimizing the functional
$\mathcal{J}_h$ over the bi-grid class $\mathcal{S}_h$ in (\ref{p2SubsBigrid}) is a control
in the continuous problem (\ref{contwavecontrolled}).

\textbf{Step II - The weak limit $\tilde{v}^*$ is the normal derivative of a solution of the continuous adjoint problem (\ref{contwaveadjoint}).}
Consider $(\mathbf{\tilde{U}}_h^0,\mathbf{\tilde{U}}_h^1)\in\mathcal{S}_h$ (which in what follows can be both
the subspace in (\ref{p2SubspTruncation}) or the one in (\ref{p2SubsBigrid})) to be the minimum of the functional $\mathcal{J}_h$. Due to the uniform nature of the observability inequality
(\ref{p2ObservabilityInequalityGeneral}), $\mathcal{E}_h(\mathbf{\tilde{U}}_h^0,\mathbf{\tilde{U}}_h^1)$ is uniformly bounded, i.e. there exists
a constant $C>0$ independent of $h$ such that
\begin{equation}\mathcal{E}_h(\mathbf{\tilde{U}}_h^0,\mathbf{\tilde{U}}_h^1)
=\frac{1}{2}\sum\limits_{k=1}^{N}(\Lambda_h^{a,k}|\widehat{\tilde{u}}^{a,k,0}_h|^2+|\widehat{\tilde{u}}^{a,k,1}_h|^2
+\Lambda_h^{o,k}|\widehat{\tilde{u}}^{o,k,0}_h|^2+|\widehat{\tilde{u}}^{o,k,1}_h|^2)\leq C.\label{UnifBoundMinimizer}\end{equation}
Due to property (\ref{nullresonant}), the resonant mode in the solution of the adjoint problem (\ref{p2adjoint}) for initial data in the filtered space $\mathcal{S}_h$ in (\ref{p2SubspTruncation}) or  (\ref{p2SubsBigrid}) vanishes, so that the Fourier representation of the total energy in the left hand side of (\ref{UnifBoundMinimizer}) is valid for both filtered spaces $\mathcal{S}_h$ in (\ref{p2SubspTruncation}) and (\ref{p2SubsBigrid}). 

Remark however that the high frequency components vanish
for data in the truncation subspace $\mathcal{S}_h$ in (\ref{p2SubspTruncation}). On the other hand, for data in the bi-grid space $\mathcal{S}_h$ in (\ref{p2SubsBigrid}),  the relations between the optic and the acoustic modes and the high frequencies
in the acoustic mode and the lowest ones described in Propositions \ref{PropLinearData} and \ref{PropLowHighFreqAcoustical} hold. 

From (\ref{UnifBoundMinimizer}),
we deduce that
\begin{equation}(\lambda_h^{a,k}\widehat{\tilde{u}}^{a,k,0}_h)_k\rightharpoonup (\lambda^k\widehat{\tilde{u}}^{*,k,0})_k,\  
(\widehat{\tilde{u}}^{a,k,1}_h)_k\rightharpoonup (\widehat{\tilde{u}}^{*,k,1})_k, \ 
\Big(\frac{\widehat{\tilde{u}}^{o,k,1}_h}{\lambda_h^{o,k}}\Big)_k,
\ (\widehat{\tilde{u}}^{o,k,0}_h)_k\rightharpoonup 0\mbox{ as }h\to 0,\mbox{ weakly in }\ell^2.
\label{WeakConvOptimalData}\end{equation}

Set $\tilde{u}^{*,i}(x):=\sum_{k=1}^{\infty}\widehat{\tilde{u}}^{*,k,i}\varphi^k(x)$, observe that $(\tilde{u}^{*,0},\tilde{u}^{*,1})\in\mathcal{V}$ and
denote by $\tilde{u}^*(x,t)$ the corresponding solution of (\ref{contwaveadjoint}). Firstly, we prove that
\begin{equation}-\frac{\tilde{U}_N(t)}{h}\rightharpoonup \tilde{u}^*_x(1,t)\mbox{ as }h\to0\mbox{ weakly in }L^2(0,T).\label{p2WeakConvDiscNormalDeriv}\end{equation}

In fact, for arbitrary functions $\psi\in L^2(0,T)$ and $\psi_{\epsilon}\in C^k_0(0,T)$, we will prove the following estimate:
\begin{align}\label{EstimateWeakConv}\Big|\int\limits_0^T\Big(-\frac{\tilde{U}_N(t)}{h}-\tilde{u}^*_x(1,t)\Big)\psi(t)\,dt\Big|&
\leq \Big|\int\limits_0^T\Big(-\frac{\Gamma^a\tilde{U}_N(t)}{h}-\tilde{u}^*_x(1,t)\Big)\psi(t)\,dt\Big|\\&+C||\psi-\psi_{\epsilon}||_{L^2(0,T)}+
Ch^k||\psi^{(k)}_{\epsilon}||_{L^2(0,T)},\nonumber\end{align}
where $\Gamma^a:=\Gamma^a_1$ is the projection on the acoustic branch defined by (\ref{p2projacoudelta}). In a similar way, we define the projection
on the optic branch, $\Gamma^{o}$.  In order to prove (\ref{EstimateWeakConv}), we decompose its right hand side as follows:
\begin{align}\nonumber\int\limits_0^T\Big(-\frac{\tilde{U}_N(t)}{h}-\tilde{u}^*_x(1,t)\Big)\psi(t)\,dt&=
\int\limits_0^T\Big(-\frac{\Gamma^{a}\tilde{U}_N(t)}{h}-\tilde{u}^*_x(1,t)\Big)\psi(t)\,dt
+\int\limits_0^T\Big(-\frac{\Gamma^{o}\tilde{U}_N(t)}{h}\Big)(\psi(t)-\psi_{\epsilon}(t))\,dt
\\\nonumber&+\int\limits_0^T\Big(-\frac{\Gamma^{o}\tilde{U}_N(t)}{h}\Big)\psi_{\epsilon}(t)\,dt=I_h^1+I_h^2+I_h^3.\end{align}
Taking into account that $\psi_{\epsilon}\in C_c^k(0,T)$, by integration by parts, we have
$$I_h^3=(-1)^k\int\limits_0^T\psi_{\epsilon}^{(k)}(t)
\Big(\sum\limits_{\pm}\sum\limits_{l=1}^N\widehat{\tilde{u}}^{o,l}_{\pm}\frac{1}{(\pm i\lambda_h^{o,l})^k}
\exp(\pm i\lambda_h^{o,l}(t-T))\Big(-\frac{\varphi^{o,l}_N}{h}\Big)\Big)\,dt.$$
From the Cauchy-Schwartz and the admissibility inequality, the bound (\ref{UnifBoundMinimizer}) and the fact that $\Lambda_h^{o,l}\geq 12/h^2$ for all $1\leq l\leq N$, we deduce that
$$|I_h^2|\leq \Big|\Big|-\frac{\Gamma^{o}\tilde{U}_N}{h}\Big|\Big|_{L^2(0,T)}||\psi-\psi_{\epsilon}||_{L^2(0,T)}
\lesssim||\psi-\psi_{\epsilon}||_{L^2(0,T)}\Big(\sum\limits_{l=1}^N\Lambda_h^{o,l}(|\widehat{\tilde{u}}^{o,l}_+|^2+
|\widehat{\tilde{u}}^{o,l}_-|^2)\Big)^{1/2}\lesssim ||\psi-\psi_{\epsilon}||_{L^2(0,T)}$$
and
\begin{align}\nonumber |I_h^3|&\leq ||\psi_{\epsilon}^{(k)}||_{L^2(0,T)}\Big|\Big|\sum\limits_{\pm}\sum\limits_{l=1}^N\widehat{\tilde{u}}^{o,l}_{\pm}\frac{1}{(\pm i\lambda_h^{o,l})^k}
\exp(\pm i\lambda_h^{o,l}(\cdot-T))\Big(-\frac{\varphi^{o,l}_N}{h}\Big)\Big|\Big|_{L^2(0,T)}\\\nonumber&
\lesssim ||\psi_{\epsilon}^{(k)}||_{L^2(0,T)}\Big(\sum\limits_{l=1}^N\Lambda_h^{o,l}(|\widehat{\tilde{u}}^{o,l}_+|^2+
|\widehat{\tilde{u}}^{o,l}_-|^2)(\Lambda_h^{o,l})^{-k}\Big)^{1/2}\lesssim h^k||\psi_{\epsilon}^{(k)}||_{L^2(0,T)}.\end{align}

Once we get (\ref{EstimateWeakConv}), we conclude (\ref{p2WeakConvDiscNormalDeriv}) by using the following three ingredients:
\begin{itemize}
\item the weak convergence (\ref{p2ConvDataContrAcoustical}) combined with the strong convergence $n^{a,l}\to\sqrt{2}$, $\sin(l\pi h)/\lambda^{a,l}\to 1$ and
$$\int\limits_0^T\psi(t)\exp(\pm i(t-T)\lambda_h^{a,l})\,dt\to \int\limits_0^T\psi(t)\exp(\pm i(t-T)\lambda^l)\,dt\mbox{ as }h\to 0,$$
allowing us to pass to the limit as $h\to 0$ in the sense of $\ell^2$ in the sum in the right hand side of
$$\int\limits_0^T\Big(-\frac{\Gamma^{a}\tilde{U}_N(t)}{h}\Big)\psi(t)\,dt=\sum\limits_{\pm}\sum\limits_{l=1}^N
\lambda_h^{a,l}\widehat{\tilde{u}}^{a,l}_{\pm}n^{a,l}(-1)^l\frac{\sin(l\pi h)}{\lambda^{a,l}}\int\limits_0^T\psi(t)\exp(\pm i(t-T)\lambda_h^{a,l})\,dt,$$
so that we can guarantee that the first term in the right hand side of (\ref{EstimateWeakConv}) is small as $h\to 0$.
\item the density of $C_c^k(0,T)$ in $L^2(0,T)$, allowing to choose $\epsilon$ so that $||\psi-\psi_{\epsilon}||_{L^2(0,T)}$ is arbitrarily small.
\item an appropriate choice of the mesh size $h$ according to $\epsilon$, so that $h^k||\psi^{(k)}_{\epsilon}||_{L^2(0,T)}$ is arbitrarily small.
\end{itemize}

Let us check that $\tilde{v}^*=\tilde{u}^*_x(1,t)$ in $L^2(0,T)$. Indeed, using (\ref{p2WeakConvControl}) and
then (\ref{p2WeakConvDiscNormalDeriv}), we have:
$$\int\limits_0^T\tilde{v}^*(t)\psi(t)\,dt=\lim\limits_{h\to 0}\int\limits_0^T\tilde{v}_h(t)\psi(t)\,dt=\lim\limits_{h\to 0}
\int\limits_0^T\Big(-\frac{\tilde{U}_N(t)}{h}\Big)\psi(t)\,dt=\int\limits_0^T\tilde{u}^*_x(1,t)\psi(t)\,dt,$$
for an arbitrary $\psi\in L^2(0,T)$. Therefore, $\tilde{v}^*=\tilde{v}$ in $L^2(0,T)$ and also $\tilde{u}^*_x(1,t)=\tilde{u}_x(1,t)$, which, jointly with the continuous
observability inequality (\ref{contobsineq}) gives that $(\tilde{u}^{*,0},\tilde{u}^{*,1})=(\tilde{u}^0,\tilde{u}^1)$ in $\mathcal{V}$ and then
$(\widehat{\tilde{u}}^{*,k,i})_k=(\widehat{\tilde{u}}^{k,i})_k$ in $\ell^2$, $i=0,1$. This means that once we have identified that the weak limit of the
discrete controls is the continuous HUM control, we have the $\Gamma$-convergence of the discrete minimizer to the continuous one.

\textbf{Step III - Strong convergence of the discrete controls. } In order to prove that $\tilde{v}_h$ converges strongly in $L^2(0,T)$ to $\tilde{v}$ as
$h\to 0$, it is enough to prove that
\begin{equation}\lim\limits_{h\to 0}\int\limits_0^T|\tilde{v}_h(t)|^2\,dt=\int\limits_0^T|\tilde{v}(t)|^2\,dt.\label{p2ConvL2normContr}\end{equation}
Using as test function $\mathbf{U}_h(t)$ in (\ref{p2EulerLagrange}) the minimizer $\mathbf{\tilde{U}}_h(t)$, we have that
\begin{equation}\int\limits_0^T|\tilde{v}_h(t)|^2\,dt=\int\limits_{0}^T\Big|\frac{\tilde{U}_N(t)}{h}\Big|^2\,dt=
\langle(\mathbf{Y}_h^1,-\mathbf{Y}_h^0),
(\mathbf{\tilde{U}}_h(0),\mathbf{\tilde{U}}_{h,t}(0))\rangle_{\mathcal{V}_h',\mathcal{V}_h}=P_h^{a}+P_h^{o},\label{p2EulerLagrangePartic}\end{equation}
where
$$P_h^{\alpha}=\sum\limits_{\pm}\sum\limits_{k=1}^N\frac{1}{2}
\Big(\frac{\widehat{y}^{\alpha,k,1}_h}{i\lambda_h^{\alpha,k}}\mp\widehat{y}^{\alpha,k,0}_h\Big)
\big(i\lambda_h^{\alpha,k}\widehat{\tilde{u}}^{\alpha,k,0}_h\pm \widehat{\tilde{u}}^{\alpha,k,1}_h\big)\exp(\mp iT\lambda_h^{\alpha,k}).$$

Let us remark that using the strong convergence of the acoustic part of the initial data to be controlled (\ref{p2ConvDataContrAcoustical}) and the
boundedness of the energy of the minimizer of $\mathcal{J}_h$,
$$\lim\limits_{h\to 0}P_h^{a}=\sum\limits_{\pm}\sum\limits_{k=1}^{\infty}\frac{1}{2}
\Big(\frac{\widehat{y}^{k,1}}{i\lambda^k}\mp\widehat{y}^{k,0}\Big)
\big(i\lambda^k\widehat{\tilde{u}}^{k,0}\pm \widehat{\tilde{u}}^{k,1}\big)\exp(\mp iT\lambda^k)=\int\limits_0^T|\tilde{u}_x(1,t)|^2\,dt=
\int\limits_0^T|\tilde{v}(t)|^2\,dt.$$
On the other hand, taking into account the uniform boundedness of the energy of the minimizer of $\mathcal{J}_h$, we get
$$|P_h^o|^2\leq C\sum\limits_{k=1}^N\Big(\frac{|\widehat{y}^{o,k,1}_h|^2}{\Lambda_h^{o,k}}+|\widehat{y}^{o,k,0}_h|^2\Big)\to 0\mbox{ as }h\to 0,$$
which concludes (\ref{p2ConvL2normContr}) and the strong convergence of the optimal control.
\end{proof}

\begin{remark}In \cite{ErvZuaSurv}, it was proved that, for initial data $(y^0,y^1)$ in the continuous control problem (\ref{contwavecontrolled}) belonging to the more regular space $H_0^1\times L^2(0,1)$, the numerical controls obtained for the finite difference or the linear finite element semi-discretization of the wave equation (\ref{contwavecontrolled}) converge to the continuous HUM controls with an error order $h^{2/3}$. This is due to the fact that $|\lambda_h(\xi)-\xi|\sim h^2\xi^3$, where $\lambda_h(\xi)$ can be each one of the dispersion relations for the finite difference or finite element approximation of the wave equation. In \cite{MarZuaP2proc}, we observed the fact that the acoustic dispersion relation $\lambda_h^a(\xi)$ of the $P_2$ - finite element method approximates the continuous one $\xi$ with error order $h^4\xi^5$ for all $\xi\in[0,\pi/h]$, so that the convergence error for the numerical controls obtained by the bi-grid algorithm in the quadratic approximation of the wave equation increases to $h^{4/5}$ under the same regularity assumptions on the continuous initial data to be controlled.  \label{remark5}\end{remark}

\section{Final comments and open problems}\label{SectOpenPbms}
\begin{itemize}\item All the results in this paper can be extended to finite element methods of arbitrary order $k$, with the additional difficulty that when
computing the eigenvalues, the quadratic equation (\ref{p2spectral6}) has to be replaced by a $k$-th order algebraic equation in $\Lambda$ which
is technically complicated to be solved explicitly. The same difficulty arises when passing to several space dimensions.

\item The results in \cite{ErvZheZua} providing a general method to obtain uniform observability results for time discretizations of conservative
system lead to the extension of our observability results for the $P_2$ - space semi-discretization to fully discrete $P_2$ conservative approximations of the wave equation.

\item The extension of the results in this paper to non-uniform meshes is a completely open problem.
\item The last open problem we propose is related to \cite{IgZuaSch} and concerns the dispersive properties of the Schr\"{o}dinger equation approximated in space using the $P_2$ - finite element method. Designing appropriate bi-grid algorithms taking care 
of all the singularities of both group velocity and acceleration simultaneously  is an open problem.
\end{itemize}

\end{document}